\documentclass[reqno]{amsart}
\usepackage{amssymb,booktabs,cite,enumitem,array}
\usepackage[unicode,hidelinks,hypertexnames=false]{hyperref}
\makeatletter
\@namedef{subjclassname@2020}{\textup{2020} Mathematics Subject Classification}
\makeatother
\theoremstyle{plain}
\newtheorem{thm}{Theorem}[section]
\newtheorem{lem}[thm]{Lemma}
\newtheorem{cor}[thm]{Corollary}
\theoremstyle{definition}
\newtheorem{remark}[thm]{Remark}
\newcommand{\V}{\mathsf{V}}
\newcommand{\FB}{\operatorname{FB}}
\newcommand{\Id}{\operatorname{Id}}
\newcommand{\AI}{\mathrm{AI}}

\let\leq\leqslant
\let\geq\geqslant
\begin{document}

\title[Finite bases and two-element factors]{Two-element factors and finite bases for additively idempotent semirings}
\author[QIRUI MA]{QIRUI MA}
\email[Qirui Ma]{qiruimath@yeah.net}
\author[MENGYA YUE]{MENGYA YUE}
\email[Mengya Yue]{myayue@yeah.net}
\subjclass[2020]{16Y60, 08B05, 20M07}
\keywords{additively idempotent semiring, finite identity basis, direct product, regular identity, variety join}
\hypersetup{pdftitle={Two-element factors and finite bases for additively idempotent semirings},pdfauthor={QIRUI MA AND MENGYA YUE}}

\begin{abstract}
For every additively idempotent semiring $S$ in the constant-free signature,
$S$ is finitely based if and only if $S\times E$ is finitely based
when $E=M_2$ or $T_2$. For each $E\in\{L_2,R_2,D_2,N_2\}$,
both implications fail even for finite $S$: we give a finite finitely
based semiring whose product with $E$ is nonfinitely based and a finite
nonfinitely based semiring whose product with $E$ is finitely based.
\end{abstract}

\maketitle

\section{Introduction and main theorem}

An algebra is \emph{finitely based} if its equational theory follows
from finitely many identities. We write $\FB(S)$ for this property.
An \emph{additively idempotent semiring}, or \emph{ai-semiring}, is an
algebra $(S,+,\cdot)$ in which addition is associative, commutative and
idempotent, multiplication is associative, and multiplication distributes
over addition on both sides. Throughout the paper, the signature is
$(+,\cdot)$, with \emph{no constant symbols}; algebras are nonempty and
may be finite or infinite. General background can be found in
\cite{BS1981,HW1998}.

Up to isomorphism, there are six two-element ai-semirings
\cite{SR}. Each has addition equal to the maximum on the chain
$0<1$, and its multiplication is specified below.
\par
\begin{table}[!ht]
\centering
\caption{The six 2-element ai-semirings.}
\label{tab:two-element-semirings}
\setlength{\tabcolsep}{8pt}
\renewcommand{\arraystretch}{1.05}
\begin{tabular}{c c c c}
\hline
Semiring & $+$ & $\cdot$ & Equational basis\\
\hline
$L_2$ & $\begin{array}{@{}cc@{}}0&1\\1&1\end{array}$
      & $\begin{array}{@{}cc@{}}0&0\\1&1\end{array}$
      & $xy\approx x$\\
\hline
$R_2$ & $\begin{array}{@{}cc@{}}0&1\\1&1\end{array}$
      & $\begin{array}{@{}cc@{}}0&1\\0&1\end{array}$
      & $xy\approx y$\\
\hline
$M_2$ & $\begin{array}{@{}cc@{}}0&1\\1&1\end{array}$
      & $\begin{array}{@{}cc@{}}0&1\\1&1\end{array}$
      & $xy\approx x+y$\\
\hline
$D_2$ & $\begin{array}{@{}cc@{}}0&1\\1&1\end{array}$
      & $\begin{array}{@{}cc@{}}0&0\\0&1\end{array}$
      & $xy\approx yx,\quad x^2\approx x,\quad x+xy\approx x$\\
\hline
$T_2$ & $\begin{array}{@{}cc@{}}0&1\\1&1\end{array}$
      & $\begin{array}{@{}cc@{}}1&1\\1&1\end{array}$
      & $x_1x_2\approx y_1y_2,\quad x^2\approx x^2+x$\\
\hline
$N_2$ & $\begin{array}{@{}cc@{}}0&1\\1&1\end{array}$
      & $\begin{array}{@{}cc@{}}0&0\\0&0\end{array}$
      & $x_1x_2\approx y_1y_2,\quad x\approx x^2+x$\\
\hline
\end{tabular}
\smallskip

\begin{minipage}{\textwidth}
\footnotesize
Rows and columns are ordered $0,1$; bases are relative to the
ai-semiring axioms.
\end{minipage}
\end{table}
\par
Here $0$ and $1$ name elements, not constant symbols. Our main result
classifies the effect of these factors on finite basability.

\begin{thm}\label{intro:main}
Let $E$ be a two-element ai-semiring.
\begin{enumerate}[label=\textup{(\roman*)}]
\item If $E\in\{M_2,T_2\}$, then, for every ai-semiring $S$,
\[
 \FB(S)\quad\Longleftrightarrow\quad\FB(S\times E).
\]
\item If $E\in\{L_2,R_2,D_2,N_2\}$, there are ai-semirings
$A_E$ and $B_E$ such that
\[
 \FB(A_E),\quad \neg\FB(A_E\times E),\qquad
 \neg\FB(B_E),\quad \FB(B_E\times E).
\]
Thus neither implication holds for arbitrary $S$.
\item All witnesses in \textup{(ii)} can be chosen finite, with the
following orders:
\[
\begin{array}{c|rrrr}
E & L_2 & R_2 & D_2 & N_2\\ \hline
|A_E| & 65 & 65 & 3 & 65\\
|B_E| & 16 & 16 & 10 & 10
\end{array}.
\]
One may take $A_{N_2}=A_{L_2}$,
$A_{R_2}=A_{L_2}^{\mathrm{op}}$,
$B_{R_2}=B_{L_2}^{\mathrm{op}}$, and $B_{D_2}=B_{N_2}$.
The common ten-element witness is commutative. No minimality of
these orders is asserted.
\end{enumerate}
\end{thm}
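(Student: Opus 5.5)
Part (i) rests on the observation that $S\times E$ generates the join $\V(S)\vee\V(E)$. For $E=M_2$ or $T_2$, this join should be definable from $\V(S)$ by an explicit syntactic transformation.

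Write $\Id(S)$ for the equational theory. An identity $u\approx v$ holds in $S\times E$ if and only if it holds in $S$ and in $E$. For $M_2$ ($xy\approx x+y$), an identity $u\approx v$ holds iff $u$ and $v$ contain the same variables, that is, iff $c(u)=c(v)$. For $T_2$ it holds iff $c(u)=c(v)$ and, in addition, the sets of variables occurring as linear summands (length-one words) agree; a length-$\ge2$ word evaluates to $1$ everywhere.

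So $\Id(S\times M_2)$ is the set of regular identities of $S$. The plan is to show that if $\Sigma$ is a finite basis for $S$, then a finite basis for the regular part is obtained from $\Sigma$ by a uniform trick. For each $u\approx v\in\Sigma$ with variables $X$, take
\[
u+\textstyle\sum_{x\in X}w_x\approx v+\sum_{x\in X}w_x ,
\]
for suitable terms $w_x$, together with the closure identities needed to absorb a variable into a sum. Proving that this generates every regular consequence is the technical heart. I would follow the standard argument for regular varieties: take a derivation of $p\approx q$ from $\Sigma$ in which variables may be lost, and then add a ``witness'' summand carrying the lost variables throughout the derivation. Additive idempotence lets such a summand be created and later removed, because $p\approx p+p'$ and $q\approx q+q'$ hold regularly.

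For the converse, suppose $S\times E$ has a finite basis $\Gamma$. Then $\Id(S)$ is generated by $\Gamma$ together with one fixed identity that collapses $E$ but holds in $S$. There is no obvious choice for such an identity. Instead I would show directly that every identity of $S$ follows from $\Gamma$ plus its ``letter-deleting'' substitutions, which gives a finite set. The $T_2$ case adds the linear-summand bookkeeping on top of this. I expect this two-way translation, especially the bound on the new basis, to be the main obstacle.

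For (ii) the plan is constructive, built on concrete finite witnesses. For $A_{D_2}$, a three-element finitely based semiring with $A\times D_2$ nonfinitely based, I would choose a known three-element semiring whose join with the semilattice variety $\V(D_2)$ is nonfinitely based. Nonfinite basability would then be proved by the usual method: exhibit, for each $n$, an identity $u_n\approx v_n$ in $n$ variables that holds in $A\times D_2$ and is not derivable from identities in fewer variables. The nonderivability would be shown by a semiring $S_n$ that satisfies all identities of $A\times D_2$ in at most $n-1$ variables but fails $u_n\approx v_n$.

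The 65-element $A_{L_2}$ would be built similarly, for instance as a semiring of binary relations or of subsets of a small semigroup. Its finite basis would be verified by a normal-form argument, and nonfinite basability of the product by the same $S_n$ technique; the dual construction gives $A_{R_2}$. For the $B_E$ the roles reverse. $B$ is nonfinitely based via an infinite family of identities that fails only because of a specific configuration, and that configuration becomes harmless once the variety is joined with $\V(E)$, since the join forces a normal form for which a finite basis can be checked directly. These are many finite verifications, presumably computer-assisted, and they do not affect the structure of the argument.
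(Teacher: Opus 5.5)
\textbf{Part (i): two concrete failures.}

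First, your description of $\Id(T_2)$ is wrong. Every product in $T_2$ equals $1$, so a polynomial containing a word of length at least two is constantly $1$. Hence $T_2\models\mathbf u\approx\mathbf v$ exactly when both sides contain such a word, or both are sums of the same variables. For example, $x^2\approx y^2$ holds in $T_2$ although the contents differ. Conversely, $x+y+xy\approx x+y$ meets your criterion but fails at $x=y=0$; what you described is $\Id(M_2)\cap\Id(N_2)$. So $\Id(S\times T_2)$ is not ``regular identities plus linear-summand bookkeeping''. The paper instead does two things:
\begin{itemize}
\item For the forward direction, it lifts a basis $\Sigma$ of $S$ to the identities $\mathbf p+zt\approx\mathbf q+zt$, $z\mathbf p\approx z\mathbf q$, $\mathbf pz\approx\mathbf qz$. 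These derive every identity of $S$ with two proper sides.
\item For the converse, it adjoins one identity $x\approx\mathbf r(x)$ of $S$ with $\mathbf r$ proper, available when $S$ is nontrivial and $T_2\notin\V(S)$. It then uses $\mathbf p\approx\mathbf r(\mathbf p)\approx\mathbf r(\mathbf q)\approx\mathbf q$.
\end{itemize}

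Second, your fallback for the converse with $M_2$ cannot work. $\Gamma$ consists of regular identities, and in a constant-free signature regular identities are closed under substitution and deduction. So nothing obtained from $\Gamma$ and its instances yields an irregular identity of $S$. Erasing variables is not a sound rule either: from $x_1x_2\approx y_1y_2$ in $N_2$ one cannot pass to $x_1\approx y_1y_2$. The missing idea is that \emph{any} irregular identity $\rho$ of $S$ suffices. Substitute $z+w$ for a variable lost by $\rho$ and $z$ for the others; this yields $\mathbf a(z,w)\approx\mathbf a(z,w')$. Every irregular $\mathbf s\approx\mathbf t$ then follows along
$\mathbf s\approx\mathbf s[x_i\mapsto\mathbf a(x_i,x_i)]\approx\mathbf s[x_i\mapsto\mathbf a(x_i,\mathbf t)]\approx\mathbf s[x_i\mapsto\mathbf a(\mathbf t,x_i)]\approx\mathbf s[x_i\mapsto\mathbf a(\mathbf t,\mathbf t)]\approx\mathbf t$.
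The remaining links of this chain are regular identities of $S$.

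Your forward $M_2$ direction is also only gestured at. A guard summand attached to $\mathbf u\approx\mathbf v$ gets multiplied by the context of the rewriting step. The paper avoids this as follows:
\begin{itemize}
\item it builds contexts into the identities ($a\mathbf ub\preceq a\mathbf vb$, etc.);
\item it proves absorptions $\mathbf t_i\preceq\mathbf P$ rather than equalities;
\item it uses guards of the four shapes $y,ay,yb,ayb$, so that one matches an occurrence of the lost variable in a word of $\mathbf P$;
\item it adds bridge identities that reduce compound images of lost variables to variables.
\end{itemize}

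\textbf{Parts (ii) and (iii) are not proved at all.} Their content is the existence of particular witnesses of orders $65$, $16$, $10$ and $3$, with $A_{N_2}=A_{L_2}$ and $B_{D_2}=B_{N_2}$. You construct no algebra and name no separating identity family. Calling this routine finite verification misplaces where the work lies. In the paper:
\begin{itemize}
\item $A_{L_2}=A_{N_2}$ is a $0$-direct sum of six syntactic quotients generating the variety defined by $t\preceq abc$, $u+tu+tv\approx v+tu+tv$ and $xy\preceq yx+x$. The path absorptions $z_nr\preceq\mathbf P_n$ and $z_n\preceq\mathbf H_n$ show that its products with $N_2$ and $L_2$ have no basis in boundedly many variables. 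The reason is that the first new word of a derivation would need one basis instance whose premise covers a path on $2n+1$ letters.
\item The witness $A_{D_2}=S_{53}$ rests on $\V(S_{53})\vee\V(D_2)=\V(S_{(4,545)})$ from the literature.
\item $B_{L_2}=Q_8\oplus_0Q_6\oplus_0Q_4$ and $B_{N_2}=C_{10}=Q\oplus_0B$ generate varieties axiomatized by infinitely many odd-cycle absorptions $\varepsilon_n$. Their joins with $L_2$, respectively with $N_2$ and $D_2$, receive explicit finite bases.
\end{itemize}
Without such constructions, (ii) and (iii) remain unproved in your proposal.
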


Table~\ref{tab:classification} records the two implications separately.
Each negative entry is established by a counterexample in
Section~\ref{sec:counterexamples}.
\begin{table}[htbp]
\centering\small
\caption{The two implications for arbitrary ai-semirings $S$.}
\label{tab:classification}
\begin{tabular}{c c c}
\toprule
$E$ & $\FB(S)\Rightarrow\FB(S\times E)$ & $\FB(S\times E)\Rightarrow\FB(S)$\\
\midrule
$M_2$ & Yes & Yes\\
$T_2$ & Yes & Yes\\
$L_2$ & No & No\\
$R_2$ & No & No\\
$D_2$ & No & No\\
$N_2$ & No & No\\
\bottomrule
\end{tabular}
\end{table}

Let $\V(S)$ denote the variety generated by $S$. Since
\begin{equation}\label{intro:product-join}
 \Id(S\times E)=\Id(S)\cap\Id(E),\qquad
 \V(S\times E)=\V(S)\vee\V(E),
\end{equation}
Theorem~\ref{intro:main} is equally a classification of joins with the
six two-element varieties. The positive results are proved in
Section~\ref{sec:positive}. The $M_2$ proof gives a finite basis for
the regular identities of any finitely based ai-semiring. It also
establishes the same equivalence for adjoining a new element that absorbs
both operations, answering Gao's question \cite[Problem~6.3]{Gao2026}
affirmatively in the additively idempotent setting. Regular identities
have a general universal-algebraic theory; see \cite{Graczynska1983}.
The proof here is constructive and self-contained.

Section~\ref{sec:counterexamples} proves both negative directions
using finite witnesses. A sixty-five-element finitely based semiring
has nonfinitely based products with $L_2$ and $N_2$; its multiplicative
opposite gives the $R_2$ counterexample. Path identities show that
these products have no identity basis with a bounded number of
variables. The three-element $D_2$ example follows from
\cite{ZRCS2020,YRG}. In the reverse direction, directed quadratic
supports and odd cycles give sixteen-element witnesses for $L_2,R_2$.
A ten-element commutative semiring serves for both $N_2$ and $D_2$.
We specify the finite algebras explicitly, prove that they generate
the required varieties, and give finite bases for their products.
The path construction also disproves the conjecture of Ren and Zeng
\cite[Section~4]{RenZeng2019} concerning joins with $\V(N_2,T_2)$.
Finally, Remark~\ref{recovery:structural} gives structural classes on
which the four failed equivalences hold.

The finite basis problem for ai-semirings has a substantial literature.
Dolinka constructed a finite nonfinitely based semiring
\cite{Dolinka2007} and developed sufficient conditions for nonfinite
basability \cite{Dolinka2009INFB,Dolinka2009Remark}. Other natural
examples include the ai-semiring on the Brandt monoid
\cite{Volkov2021Brandt} and power semirings of finite solvable
non-Dedekind groups \cite{GusevVolkov2023Power}. Endomorphism semirings
have been studied for zero-preserving endomorphisms of finite
semilattices \cite{Dolinka2009End}, and for all endomorphisms of finite
chains and finite semilattices
\cite{GusevVolkov2026Chains,DolinkaGusevVolkov2025End}.

For small orders, Shao and Ren \cite{SR} determined the subvarieties
generated by the two-element ai-semirings and showed that all are
finitely based. Zhao et al.\ \cite{ZRCS2020}, together with Jackson,
Ren and Zhao \cite{JRZ}, settled the problem for order three.
Several additive-order types at order four have been classified
\cite{RLZC2025,YRZS2025,RLYC2026}; individual examples include the
finitely based semiring on $B_0$ \cite{Shaprynskii2024} and the
nonfinitely based semiring $S_7^0$ \cite{WRZ2024}. Our question
concerns the effect of a fixed direct-product factor on an arbitrary
ai-semiring, rather than a bound on its cardinality.

The theory of semilattice-ordered semigroups supplies a broader
framework \cite{KP2005}. Finite-basis classifications under additional
identities include weak commutativity for semirings with both operations
idempotent \cite{Pastijn2006} and the identity $x^3\approx x$
\cite{RZW2017}. Flat semirings provide further methods and examples:
see \cite{JRZ,RJZL} for general criteria and flat group extensions,
\cite{WZR2023} for flat nil-semirings, and \cite{ZW2025} for adjoining
a zero to a commutative flat semiring.

\section{Preliminaries}
\label{sec:prelim}

We use standard universal-algebraic terminology as in \cite{BS1981}.
Let $\AI$ denote the usual finite set of ai-semiring axioms. Variables
are written in ordinary type, and words and terms in boldface, for
example $x,y,z$ and $\mathbf{u},\mathbf{v},\mathbf{w}$. Following the
notation in \cite{YRG}, we write
\[
 \mathbf{u}\preceq\mathbf{v}
 \quad\text{for the identity}\quad
 \mathbf{u}+\mathbf{v}\approx\mathbf{v}.
\]
For elements of an ai-semiring $S$, the natural order is instead denoted
by $a\leq_S b$, meaning $a+b=b$; the subscript is omitted when the
algebra is clear. Thus $\preceq$ denotes an absorption identity,
whereas $\leq$ denotes an order relation between elements or numbers.
The adjective ``commutative'' always refers to multiplication, since
addition is already commutative.

Every term is, modulo $\AI$, a nonempty finite sum of nonempty words.
We call such an expression a polynomial and freely identify its summands
with a finite set, using additive idempotence. Its word set is denoted by
$\operatorname{supp}(\mathbf{P})$. The length of a word $\mathbf{w}$ is
$|\mathbf{w}|$. If multiplication
is also commutative, words can be treated as nonzero exponent vectors.
An identity $\mathbf{P}\approx\mathbf{Q}$ is equivalent, over $\AI$,
to the absorptions $\mathbf{w}\preceq\mathbf{Q}$ for
$\mathbf{w}\in\operatorname{supp}(\mathbf{P})$ and
$\mathbf{v}\preceq\mathbf{P}$ for
$\mathbf{v}\in\operatorname{supp}(\mathbf{Q})$.

Substitutions send variables to nonempty terms. Empty words used to denote
an absent left or right multiplicative context are only metanotation;
they never denote a constant or an admissible variable image. Additive
remainders may likewise be absent when a context is described.

For a variety $\mathcal U$, the free algebra on a countably infinite set
of generators is denoted by $F_\omega(\mathcal U)$. A variety, like an
algebra, is finitely based if its entire equational theory has a finite basis.
Since $\AI$ is finite, finite basability relative to the ai-semiring axioms
is equivalent to finite basability in the signature $(+,\cdot)$.

\begin{lem}\label{prelim:free-compactness}
Let $\mathcal U$ be a variety in a finite signature.
Then $\Id(F_\omega(\mathcal U))=\Id(\mathcal U)$.
Moreover, if $\mathcal U=\operatorname{Mod}(\Sigma)$ is finitely based,
some finite subset of $\Sigma$ already defines $\mathcal U$.
\end{lem}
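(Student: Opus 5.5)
The lemma has two parts, and I will prove them in order: first the identity of equational theories, then the finite-subset statement by compactness.

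For the first part, write $F=F_\omega(\mathcal U)$, free on generators $\{g_1,g_2,\dots\}$. Since $F\in\mathcal U$, every identity of $\mathcal U$ holds in $F$, so $\Id(\mathcal U)\subseteq\Id(F)$. Conversely, suppose $\mathbf p(x_1,\dots,x_n)\approx\mathbf q(x_1,\dots,x_n)$ holds in $F$. I evaluate both sides at the distinct free generators $g_1,\dots,g_n$, which gives $\mathbf p(g_1,\dots,g_n)=\mathbf q(g_1,\dots,g_n)$ in $F$. Now take any $A\in\mathcal U$ and any $a_1,\dots,a_n\in A$. The universal property gives a homomorphism $F\to A$ with $g_i\mapsto a_i$, and applying it yields $\mathbf p(a_1,\dots,a_n)=\mathbf q(a_1,\dots,a_n)$ in $A$. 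Hence the identity lies in $\Id(\mathcal U)$. Only countably many generators are needed, because each identity involves finitely many variables.

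For the second part, let $\mathcal U=\operatorname{Mod}(\Sigma)$ and let $\Gamma$ be a finite basis, so that $\operatorname{Mod}(\Gamma)=\mathcal U$. Each $\gamma\in\Gamma$ holds in $\mathcal U=\operatorname{Mod}(\Sigma)$, so $\Sigma\models\gamma$. By Birkhoff's completeness theorem, $\gamma$ is derivable from $\Sigma$ by equational deduction. Such a derivation is a finite object and uses only finitely many members of $\Sigma$; call this set $\Sigma_\gamma$. Put $\Sigma_0=\bigcup_{\gamma\in\Gamma}\Sigma_\gamma$, which is finite.

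Then $\Sigma_0\subseteq\Sigma$ gives $\operatorname{Mod}(\Sigma)\subseteq\operatorname{Mod}(\Sigma_0)$. Also $\Sigma_0\models\Gamma$, so $\operatorname{Mod}(\Sigma_0)\subseteq\operatorname{Mod}(\Gamma)=\mathcal U$. Therefore $\operatorname{Mod}(\Sigma_0)=\mathcal U$. If one prefers to avoid Birkhoff's theorem, the first-order compactness theorem works equally well: each $\gamma$ is a universal sentence, and $\Sigma\cup\{\neg\gamma\}$ has no model.

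The main point requiring care is this finiteness-of-derivations step; the rest is formal. The finite-signature hypothesis is not actually needed for either argument, but it is harmless to keep it as stated.
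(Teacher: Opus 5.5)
Your proposal is correct and follows essentially the same route as the paper's proof: you evaluate at distinct free generators and use the universal property for the first part, and for the second you take the finitely many members of $\Sigma$ used in finite derivations of each member of the finite basis. Your side remarks, that the finite-signature hypothesis is not needed and that first-order compactness would serve in place of Birkhoff's completeness theorem, are also accurate.
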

\begin{proof}
An identity uses only finitely many variables. If it holds at the corresponding
canonical free generators of $F_\omega(\mathcal U)$, the universal property of the
free algebra makes it hold under every assignment in every member of
$\mathcal U$. The converse is immediate. For the second assertion,
let $\Delta$ be a finite basis for $\mathcal U$. Each member of $\Delta$
has a finite equational derivation from $\Sigma$. The union $\Sigma_0$
of the finitely many axioms used in these derivations is finite and
satisfies $\Sigma_0\vdash\Delta\vdash\Sigma$.
\end{proof}

For a word $\mathbf w$, let $h(\mathbf w)$ be its first letter.
For a polynomial $\mathbf P$, put
$h(\mathbf P)=\{h(\mathbf w):\mathbf w\in\operatorname{supp}(\mathbf P)\}$.
We will use the following general form of word-absorption closure.

\begin{lem}[Absorption closure]\label{lr:closure}
All consequences in this lemma are taken relative to $\AI$.
Rewrite a set $\Sigma$ of ai-semiring identities as absorption
identities $\mathbf{Q}\preceq \mathbf{P}$, taking both directions of each equality. Starting
from a nonempty set $A$ of words, allow the following operation:
for any substitution $\sigma$ by nonempty polynomials and any left
and right word contexts $\boldsymbol{\ell},\mathbf{r}$, add all words of
$\boldsymbol{\ell}\sigma(\mathbf{Q})\mathbf{r}$ whenever every word of $\boldsymbol{\ell}\sigma(\mathbf{P})\mathbf{r}$ is already
present. Contexts may be empty; substitution terms may not.

Let $\operatorname{cl}_{\Sigma}(A)$ be the smallest set closed under
these operations and containing $A$. Then
\[
 \Sigma\models \mathbf{w}\preceq \mathbf{P}
 \quad\Longleftrightarrow\quad
 \mathbf{w}\in\operatorname{cl}_{\Sigma}(\operatorname{supp}(\mathbf{P})).
\]
In particular, a closed set containing every word of $\mathbf{P}$ but not
$\mathbf{w}$ proves $\Sigma\not\models \mathbf{w}\preceq \mathbf{P}$.
\end{lem}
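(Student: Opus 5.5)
The plan is to prove both directions directly. Soundness will be an induction on the closure construction, and completeness will come from building a model out of a closed word set.

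\emph{Soundness} ($\Leftarrow$). Put $A=\operatorname{supp}(\mathbf P)$ and let $C$ be the set of words $\mathbf u$ with $\Sigma\models\mathbf u\preceq\mathbf P$. We show $C$ contains $A$ and is closed under the operation, so $\operatorname{cl}_\Sigma(A)\subseteq C$. Clearly $A\subseteq C$. Suppose every word of $\boldsymbol\ell\sigma(\mathbf P')\mathbf r$ lies in $C$, where $\mathbf Q'\preceq\mathbf P'$ is in $\Sigma$. Summing these absorptions and using additive idempotence gives $\Sigma\models\boldsymbol\ell\sigma(\mathbf P')\mathbf r\preceq\mathbf P$. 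From $\mathbf Q'+\mathbf P'\approx\mathbf P'$ we get, by substitution, multiplying by the contexts and distributivity, $\boldsymbol\ell\sigma(\mathbf Q')\mathbf r\preceq\boldsymbol\ell\sigma(\mathbf P')\mathbf r$. Transitivity of $\preceq$ (a consequence of $\AI$) then puts every word of $\boldsymbol\ell\sigma(\mathbf Q')\mathbf r$ in $C$. Empty contexts are handled by simply omitting them.

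\emph{Completeness} ($\Rightarrow$). Let $K=\operatorname{cl}_\Sigma(\operatorname{supp}(\mathbf P))$. We want an algebra in $\operatorname{Mod}(\Sigma\cup\AI)$ and an assignment under which $\mathbf w\preceq\mathbf P$ fails when $\mathbf w\notin K$.

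\begin{enumerate}[label=\textup{(\arabic*)}]
\item Work in the free ai-semiring $F$ on the variables, which is the semiring of finite nonempty sets of nonempty words under union and elementwise concatenation. Define a relation $\theta$ on $F$: $U\,\theta\,V$ iff for all word contexts $\boldsymbol\ell,\mathbf r$ (possibly empty) we have $\boldsymbol\ell U\mathbf r\subseteq K\iff\boldsymbol\ell V\mathbf r\subseteq K$.
\item Check that $\theta$ is a congruence. It is an equivalence relation and is closed under multiplication by words on either side. For multiplication by a set, note $\boldsymbol\ell UW\mathbf r\subseteq K$ iff $\boldsymbol\ell U\mathbf u\mathbf r\subseteq K$ for every $\mathbf u\in W$. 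Compatibility with $+$ holds because $\boldsymbol\ell(U\cup W)\mathbf r\subseteq K$ iff both pieces are contained in $K$.
\item Check that $F/\theta\models\Sigma$. Take any substitution $\sigma$, which lands in $F$ since substitution terms are nonempty. For an absorption $\mathbf Q'\preceq\mathbf P'$ in $\Sigma$ we need $\sigma(\mathbf Q')\cup\sigma(\mathbf P')\;\theta\;\sigma(\mathbf P')$. If $\boldsymbol\ell\sigma(\mathbf P')\mathbf r\subseteq K$, closure gives $\boldsymbol\ell\sigma(\mathbf Q')\mathbf r\subseteq K$, so the union is also in $K$. The converse is trivial. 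Since both directions of each equality of $\Sigma$ were included, each identity of $\Sigma$ holds.
\item Evaluate at the identity assignment. If $\mathbf w+\mathbf P\;\theta\;\mathbf P$, take empty contexts: $\operatorname{supp}(\mathbf P)\subseteq K$ forces $\mathbf w\in K$. Hence $\mathbf w\notin K$ gives $\Sigma\not\models\mathbf w\preceq\mathbf P$.
\end{enumerate}

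The final sentence of the lemma follows from soundness. A closed set $D\supseteq\operatorname{supp}(\mathbf P)$ contains $\operatorname{cl}_\Sigma(\operatorname{supp}(\mathbf P))$. If $\Sigma\models\mathbf w\preceq\mathbf P$ then $\mathbf w\in\operatorname{cl}_\Sigma(\operatorname{supp}(\mathbf P))\subseteq D$, so $\mathbf w\notin D$ rules this out.

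The main obstacle is step (3). The closure operation is phrased with word contexts only, whereas the congruence must respect arbitrary polynomial multiplication, and substitutions go to polynomials. The reduction in step (2) to single-word contexts, using distributivity, is the point that makes the argument work. The empty-context conventions must be kept consistent between the definition of $\theta$ and the closure rule.
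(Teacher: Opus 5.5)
Your proof is correct, but the completeness half takes a different route from the paper.

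The paper builds one global model out of closures. Its elements are the sets $c(A)$ for nonempty finite word sets $A$, with $c(A)+c(B)=c(A\cup B)$ and $c(A)c(B)=c(AB)$. Well-definedness requires the compatibility equalities $c(c(A)\cup c(B))=c(A\cup B)$ and $c(c(A)c(B))=c(AB)$. Proving the second of these is where the multiplicative contexts in the rule are used. Once the algebra exists, checking $\Sigma$ needs only empty contexts. Its natural order is inclusion of closures, so the assignment $x\mapsto c(\{x\})$ refutes $\mathbf w\preceq\mathbf P$.

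You instead take the syntactic congruence of the single closed set $K$ on the free ai-semiring $\mathcal P_f^+(X^+)$. Here the roles of the contexts are swapped. The congruence property is automatic from the subset-test description, and the contexts are spent verifying that $F/\theta$ satisfies $\Sigma$.

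What each route buys:
\begin{itemize}
\item Your version avoids the induction on derivations behind the compatibility equalities. It is also the same device the paper later uses for $Q_{IJ}$ and $Q(K_i)$.
\item Your version proves the final assertion directly for any closed $D\supseteq\operatorname{supp}(\mathbf P)$, simply by using $D$ in place of $K$.
\item The paper's version gives one model handling all absorptions at once. In effect it is the relatively free algebra of $\operatorname{Mod}(\Sigma)$.
\end{itemize}

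One small slip: the lemma's final sentence relies on the implication $\Sigma\models\mathbf w\preceq\mathbf P\Rightarrow\mathbf w\in\operatorname{cl}_\Sigma(\operatorname{supp}(\mathbf P))$. That is your completeness direction, not soundness as you state.
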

\begin{proof}
Each word addition is a valid absorption inference, which proves
the right-to-left implication. For the converse, write $c$ for the
closure operator. Closure is compatible with multiplicative
contexts. Successive applications of the closure rules give
\[
 c(c(A)\cup c(B))=c(A\cup B),\qquad
 c(c(A)c(B))=c(AB).
\]
Indeed, every finite derivation of a word from $A$ can be performed
after adjoining $B$, or after multiplying by a fixed word from
$B$; repeating this argument also handles derived words from $B$.
For closures of nonempty finite sets, define
\[
 c(A)+c(B)=c(A\cup B),\qquad
 c(A)c(B)=c(AB).
\]
The displayed compatibility equalities show that these operations
are independent of the choice of finite generating sets and define
an ai-semiring. An assignment of variables to its elements can be
represented by a choice of nonempty finite generating word sets.
These polynomial substitutions are among those permitted in
defining $c$, so this semiring satisfies every identity in
$\Sigma$.

Assign the variable $x$ to $c(\{x\})$. If
$\Sigma\models \mathbf{w}\preceq \mathbf{P}$, then
$c(\{\mathbf{w}\})\subseteq c(\operatorname{supp}(\mathbf{P}))$, and hence $\mathbf{w}$ belongs
to the latter set. Finally, each rule has finitely many premises and
conclusions, so every member of a closure is obtained after finitely
many word additions.
\end{proof}

\section{The two universal equivalences}
\label{sec:positive}

We prove both directions for $M_2$ and $T_2$ without structural or
cardinality restrictions on $S$.

\subsection{The semilattice factor \texorpdfstring{$M_2$}{M2}}

\begin{thm}\label{m:equivalence}
For every ai-semiring $S$ in the constant-free signature $(+,\cdot)$,
\[
 \FB(S\times M_2)\quad\Longleftrightarrow\quad\FB(S).
\]
Equivalently, $\V(S)\vee\V(M_2)$ is finitely based if and only if
$\V(S)$ is finitely based. No finiteness assumption on $S$ is required.
\end{thm}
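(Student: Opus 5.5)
We work throughout with $\Id(S\times M_2)=\Id(S)\cap\Id(M_2)$ from \eqref{intro:product-join}. Identities of $M_2$ are exactly the \emph{regular} identities $\mathbf P\approx\mathbf Q$, those whose two sides contain the same set of variables. Indeed, in $M_2$ every term evaluates to the join of its variables. So $\Id(S\times M_2)$ is the set of regular identities of $S$.

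Two cases are immediate. If $S$ itself satisfies only regular identities, the two theories coincide. Otherwise $S$ satisfies some irregular identity, and from it we would like to derive a finite ``witness'' identity that explains all nonregular consequences. The standard device is to show that $\Id(S)$ is generated by its regular part together with one irregular identity. Concretely, suppose $S\models\mathbf P\approx\mathbf Q$ with a variable $x$ occurring in $\mathbf P$ but not in $\mathbf Q$. Substituting for the remaining variables and using $\AI$, we obtain identities of the shape $\mathbf u\approx\mathbf u+\mathbf t(x,\dots)$, or its dual. These let us add or remove arbitrary terms in a controlled way.

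\textbf{Direction $\FB(S)\Rightarrow\FB(S\times M_2)$.} Let $\Sigma$ be a finite basis of $S$. From $\Sigma$ we build a finite set $\Sigma^{\mathrm{reg}}$ of regular identities valid in $S$. For each $\mathbf P\approx\mathbf Q$ in $\Sigma$, and each way of adjoining a bounded ``padding'' of the missing variables, we include identities such as
\[
 \mathbf P+\mathbf P\,\mathbf y\,\mathbf P+\cdots\approx\mathbf Q+\cdots,
\]
where the padding makes the variable sets equal. The simplest choice is $\mathbf P+\mathbf z\approx\mathbf Q+\mathbf z$, with $\mathbf z$ the sum of the variables in the symmetric difference of the two variable sets, possibly also multiplied into contexts. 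We then show that every regular consequence of $\Sigma$ follows from $\Sigma^{\mathrm{reg}}\cup\AI$. Take an equational derivation $\mathbf s_0,\dots,\mathbf s_n$ of a regular identity $\mathbf s_0\approx\mathbf s_n$ from $\Sigma$. Each step replaces $\boldsymbol\ell\sigma(\mathbf P)\mathbf r$ inside a sum by $\boldsymbol\ell\sigma(\mathbf Q)\mathbf r$. Write $X$ for the common variable set of $\mathbf s_0$ and $\mathbf s_n$. Replace every $\mathbf s_i$ by $\mathbf s_i+\mathbf X$, where $\mathbf X$ is the sum of all variables of $X$ together with all variables appearing in the derivation. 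Each step then becomes an instance of a padded identity in $\Sigma^{\mathrm{reg}}$: the padding variables are absorbed by the global summand. Finally we must remove $\mathbf X$, that is, derive $\mathbf s_0\approx\mathbf s_0+\mathbf X$. Variables outside $X$ can first be substituted away by a variable of $X$, since an extra variable is harmless in a substitution instance. So it remains to show that $\mathbf s+x\approx\mathbf s$ for $x$ occurring in $\mathbf s$ is not generally valid, and to handle this carefully.

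\textbf{The main obstacle.} The equation $\mathbf s\approx\mathbf s+x$ is false in general, so naive additive padding fails. The padding must therefore be multiplicative and positioned inside words. For each variable $y$ occurring in the relevant subterm, we can replace the padding summand $y$ by words $\boldsymbol\ell\,y\,\mathbf r$ already present in the sum. I would therefore define $\Sigma^{\mathrm{reg}}$ to consist of identities
\[
 \mathbf a\,\mathbf P\,\mathbf b+\mathbf c\,\mathbf y\,\mathbf d\approx\mathbf a\,\mathbf Q\,\mathbf b+\mathbf c\,\mathbf y\,\mathbf d,
\]
with $\mathbf a,\mathbf b,\mathbf c,\mathbf d$ new variables or empty. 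The aim is to show that in any derivation of a regular identity, each vanishing variable can be carried by a word copied from a neighbouring term of the derivation. This uses Lemma~\ref{lr:closure} to argue at the level of word sets. This bookkeeping is where I expect the real work to lie.

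\textbf{Converse $\FB(S\times M_2)\Rightarrow\FB(S)$.} Let $\Delta$ be a finite basis of the regular part of $\Id(S)$. If $S$ satisfies an irregular identity, choose one, $\iota$, together with finitely many of its consequences that express how variables may be deleted. We then show that $\Delta\cup\{\iota\}$ is a basis. Any identity of $S$ is reduced, using $\iota$, to a regular identity of $S$, and that regular identity follows from $\Delta$. The reduction proceeds by adjoining occurrences of missing variables through $\iota$. By Lemma~\ref{prelim:free-compactness}, it suffices to check this in the free algebra. We then conclude that the regular and irregular parts together axiomatize $\V(S)$.
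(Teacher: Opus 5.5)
Your overall architecture is the paper's: the regular identities of $S$ are exactly $\Id(S\times M_2)$, guard-padded copies of $\Sigma$ handle the forward direction, and the regular part plus one irregular identity handles the converse. But in both directions the decisive step is missing, and the forward plan you sketch would fail as stated.

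\textbf{Forward direction, first gap.} Guards cannot be copied from a neighbouring term. Intermediate terms of a derivation can have strictly smaller content than $X$, so a single step may be irregular, and then no set of regular identities derives it. For $\Sigma=\{xy\approx x\}$ (so $S=L_2$), the regular identity $xy\approx xyy$ is derived as $xy\to x\to x(yy)$, and the middle term contains no $y$. The paper never proves $\mathbf t_i\approx\mathbf t_{i+1}$. Instead it proves $\Gamma\vdash\mathbf t_i\preceq\mathbf P$ by induction on $i$, always taking guard words from the fixed endpoint $\mathbf P$, whose content is all of $X$. The opposite absorption comes from the reversed derivation. In effect, the right global summand is the endpoint itself, not your $\mathbf X$: it carries every variable inside genuine words, and the two one-sided absorptions combine into the identity.

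\textbf{Forward direction, second gap.} A finite list of guards $\mathbf c\,y\,\mathbf d$ matches only when the image of the vanishing variable is a single letter. In the example, the second step substitutes $yy$ for $y$, and $\mathbf P=xy$ has no word with factor $yy$. In general $\sigma(y)$ can be $x_1x_2$ with $x_1,x_2$ never adjacent in $\mathbf P$, or $x_1+x_2$ with $x_1,x_2$ occurring only in different contexts. The paper adds the bridge identities $\mathbf p[y\mapsto uv]\preceq\mathbf q+\mathbf p[y\mapsto u]+\mathbf p[y\mapsto v]$ and their analogue for $u+v$. These are regular and hold in $S$ because $y\notin c(\mathbf q)$, and with them the paper inducts on the size of the image of $y$. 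Neither ingredient is in your proposal; this is exactly the ``bookkeeping'' you leave open.

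\textbf{Converse.} This direction is likewise only asserted. Saying that every identity is ``reduced, using $\iota$, to a regular identity'' does not explain how one fixed identity removes arbitrarily many variables in arbitrary positions of an arbitrary irregular $\mathbf s\approx\mathbf t$. The identities $\mathbf u\approx\mathbf u+\mathbf t(x,\dots)$ you mention insert $x$ only in one fixed pattern.

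The missing device is an independence law. Reverse $\iota\colon\mathbf p\approx\mathbf q$ if necessary so that some $y\in c(\mathbf p)\setminus c(\mathbf q)$ exists. Substitute $z+w$ for $y$ and $z$ for every other variable. This gives $\mathbf a(z,w)\approx\mathbf b(z)$ with $z,w\in c(\mathbf a)$, hence $\mathbf a(z,w)\approx\mathbf a(z,w')$. Now let $\mathbf s\approx\mathbf t$ be irregular, oriented so that $c(\mathbf s)\setminus c(\mathbf t)=\{x_1,\dots,x_k\}$ is nonempty. With simultaneous substitutions,
\begin{align*}
 \mathbf s&\approx\mathbf s[x_i\mapsto\mathbf a(x_i,x_i)]
 \approx\mathbf s[x_i\mapsto\mathbf a(x_i,\mathbf t)]\\
 &\approx\mathbf s[x_i\mapsto\mathbf a(\mathbf t,x_i)]
 \approx\mathbf s[x_i\mapsto\mathbf a(\mathbf t,\mathbf t)]\approx\mathbf t.
\end{align*}
The second and fourth steps are instances of the independence law. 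The other three are regular identities of $S$, with contents $c(\mathbf s)$, $c(\mathbf s)\cup c(\mathbf t)$ and $c(\mathbf t)$. They hold because in $S$ the value of $\mathbf s$ does not depend on the $x_i$. This is what shows that $\Id(S)$ is generated by its regular part together with $\iota$; your appeal to free algebras does not supply it.
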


For a term $\mathbf u$, write $c(\mathbf u)$ for its set of variables.
An identity is \emph{regular} if its two sides have the same content.
Thus an absorption $\mathbf p\preceq\mathbf q$ is regular precisely
when $c(\mathbf p)\subseteq c(\mathbf q)$, since it abbreviates
$\mathbf p+\mathbf q\approx\mathbf q$.

\begin{lem}\label{m:regularization}
The identities of $S\times M_2$ are exactly the regular identities of
$S$. If $S^\infty$ is obtained from $S$ by adjoining an element that
absorbs both operations, then
\[
 \V(S^\infty)=\V(S\times M_2).
\]
\end{lem}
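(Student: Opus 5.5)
We prove the two assertions separately. In both, identities are handled in polynomial normal form, so an identity $\mathbf P\approx\mathbf Q$ is a pair of nonempty finite sums of words. We use \eqref{intro:product-join}, namely $\Id(S\times M_2)=\Id(S)\cap\Id(M_2)$.

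\emph{First assertion.} The identities of $M_2$ are exactly the regular identities. Indeed, in $M_2$ both operations are the join on the chain $0<1$. Hence a polynomial evaluates to $1$ exactly when some variable in its content is assigned $1$, so $M_2\models\mathbf P\approx\mathbf Q$ iff $c(\mathbf P)=c(\mathbf Q)$. For the nontrivial direction, take $\mathbf P$ and $\mathbf Q$ with $c(\mathbf P)\neq c(\mathbf Q)$, say $x\in c(\mathbf P)\setminus c(\mathbf Q)$. Assign $x\mapsto1$ and every other variable $\mapsto0$; then $\mathbf P$ evaluates to $1$ and $\mathbf Q$ to $0$. Intersecting with $\Id(S)$ shows that $\Id(S\times M_2)$ is precisely the set of regular identities of $S$.

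\emph{Second assertion.} Let $\infty$ be the adjoined element, so that $\infty+a=a\infty=a\infty=\infty\cdot a=\infty$ for all $a$, i.e.\ $\infty+a=a\infty=\infty a=\infty$.

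First, $S^\infty$ is an ai-semiring. Associativity, commutativity and idempotence of $+$, and associativity of $\cdot$, are clear once $\infty$ is present. For distributivity, if any of $a,b,c$ equals $\infty$, both sides of $a(b+c)=ab+ac$ equal $\infty$.

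Next, $S^\infty\in\V(S\times M_2)$. We show that it satisfies every regular identity of $S$; by the first assertion these are exactly the identities of $S\times M_2$, so membership follows. Take a regular identity $\mathbf P\approx\mathbf Q$ of $S$ and an assignment into $S^\infty$. If some variable of the common content is sent to $\infty$, then both sides evaluate to $\infty$. This holds because every word of $\mathbf P$ or of $\mathbf Q$ containing that variable becomes $\infty$, and each side contains at least one such word since the variable lies in its content. Otherwise all variables of the content take values in $S$, and the identity holds there.

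Conversely, $S\times M_2\in\V(S^\infty)$, which we get by showing that every identity of $S^\infty$ holds in $S\times M_2$. Since $S\le S^\infty$, every identity of $S^\infty$ holds in $S$. It remains to see that it is regular. Suppose $x\in c(\mathbf P)\setminus c(\mathbf Q)$. Assign $x\mapsto\infty$ and every other variable to a fixed element $s\in S$, which exists because algebras are nonempty. Then $\mathbf P$ evaluates to $\infty$, while $\mathbf Q$ evaluates to an element of $S$, so the identity fails in $S^\infty$. Thus $\Id(S^\infty)$ is exactly the set of regular identities of $S$, which is $\Id(S\times M_2)$, and the two varieties coincide.

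The only subtle points are that each side of an identity contains a word with the chosen variable, and that $S$ is nonempty; both are routine. No step is a genuine obstacle.
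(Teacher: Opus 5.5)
Your proof is correct and follows essentially the paper's argument: $M_2$ evaluates every term to the join of its variables, and for $S^\infty$ you prove the same two inclusions. The only variation is that you show identities of $S^\infty$ are regular by the direct assignment $x\mapsto\infty$, others $\mapsto s$, whereas the paper notes that the partition $\{S,\{\infty\}\}$ is a congruence with quotient $M_2$; this amounts to the same thing, and you also check explicitly that $S^\infty$ is an ai-semiring, which the paper leaves implicit.
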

\begin{proof}
In $M_2$, a term evaluates to the join of the values of its variables.
An identity therefore holds in $M_2$ exactly when it is regular. This
proves the first assertion. The algebra $S$ embeds in $S^\infty$, and
the partition with blocks $S$ and $\{\infty\}$ is a congruence with
quotient $M_2$. Conversely, a regular identity of $S$ holds in
$S^\infty$: an evaluation using $\infty$ gives that value on both
sides, and every other evaluation takes place in $S$. The equality of
varieties follows.
\end{proof}

\begin{proof}[Proof of Theorem~\ref{m:equivalence}]
\textbf{From $S$ to $S\times M_2$.}
Let $\Sigma$ be a finite identity basis for $S$. We construct a finite
basis for its regular identities. Include the ai-semiring axioms in
every basis considered below.

Replace each identity $\mathbf u\approx\mathbf v$ of $\Sigma$ by
its two orientations $\mathbf u\preceq\mathbf v$ and
$\mathbf v\preceq\mathbf u$. For each orientation, take the four
context forms
\[
 \mathbf u\preceq\mathbf v,\qquad
 a\mathbf u\preceq a\mathbf v,\qquad
 \mathbf u b\preceq\mathbf v b,\qquad
 a\mathbf u b\preceq a\mathbf v b,
\]
where $a,b$ are fresh variables. Denote this finite list by
$\mathcal A$. Every absorption in $\mathcal A$ is valid in $S$.

For each $\mathbf p\preceq\mathbf q$ in $\mathcal A$, put
$Y=c(\mathbf p)\setminus c(\mathbf q)$. If $Y$ is empty, retain
the absorption. Otherwise, write $Y=\{y_1,\ldots,y_k\}$ and include
all the \emph{guard identities}
\begin{equation}\label{m:guards}
 \mathbf p\preceq\mathbf q+\sum_{i=1}^{k}\mathbf g_i(y_i),
\end{equation}
where, independently for each $i$, the guard is one of
\[
 y_i,\qquad a_i y_i,\qquad y_i b_i,\qquad a_i y_i b_i.
\]
All guard parameters are pairwise distinct and absent from
$\mathbf p$ and $\mathbf q$. Include also the following two
\emph{bridge identities} for each $y\in Y$:
\begin{align}
 \mathbf p[y\mapsto uv]
 &\preceq\mathbf q+\mathbf p[y\mapsto u]+\mathbf p[y\mapsto v],
 \label{m:product-bridge}\\
 \mathbf p[y\mapsto u+v]
 &\preceq\mathbf q+\mathbf p[y\mapsto u]+\mathbf p[y\mapsto v],
 \label{m:sum-bridge}
\end{align}
where $u,v$ are fresh. Let $\Gamma$ consist of these identities and
the ai-semiring axioms.

The set $\Gamma$ is finite. Every guard identity holds in $S$ because
$\mathbf p\preceq\mathbf q$ does. The bridge identities also hold:
$y$ is absent from $\mathbf q$, so each left side is absorbed by
$\mathbf q$. All these absorptions are regular. Consequently
$\Gamma\subseteq\Id(S\times M_2)$.

We prove the following derivation property. For
$\mathbf p\preceq\mathbf q\in\mathcal A$, a substitution $\theta$
and a polynomial $\mathbf R$,
\begin{equation}\label{m:local-derivation}
 \left.
 \begin{array}{l}
  \Gamma\vdash\theta(\mathbf q)\preceq\mathbf R,\\
  c(\theta(\mathbf p))\subseteq c(\mathbf R)
 \end{array}
 \right\}
 \quad\Longrightarrow\quad
 \Gamma\vdash\theta(\mathbf p)\preceq\mathbf R.
\end{equation}
If $Y$ is empty, this follows from the retained absorption. Otherwise,
induct on the sum of the term-tree sizes of $\theta(y)$ for $y\in Y$.
When all these images are variables, select, for each
$\theta(y_i)=x_i$, a summand of $\mathbf R$ containing $x_i$.
Write it as $\boldsymbol\alpha_i x_i\boldsymbol\beta_i$, where
either context word may be empty. One of the guard forms matches this
word; substitute the nonempty context words for its parameters. The
instantiated guards are summands of $\mathbf R$, so
\eqref{m:guards} proves the desired absorption. The selected summands
may coincide.

For the induction step, let $\theta(y)$ be either
$\mathbf r\mathbf s$ or $\mathbf r+\mathbf s$. Replace this image
by $\mathbf r$ and by $\mathbf s$ to obtain substitutions
$\theta_r$ and $\theta_s$. The image of $\mathbf q$ is unchanged,
because $y\notin c(\mathbf q)$, and the images of $\mathbf p$ use
only variables already in $\theta(\mathbf p)$. The induction
hypothesis therefore gives
\[
 \Gamma\vdash\theta_r(\mathbf p)\preceq\mathbf R,
 \qquad
 \Gamma\vdash\theta_s(\mathbf p)\preceq\mathbf R.
\]
The appropriate bridge identity now yields
$\theta(\mathbf p)\preceq\mathbf R$. This proves
\eqref{m:local-derivation} for arbitrary term substitutions, including
polynomial substitutions.

Let $\mathbf P\approx\mathbf Q$ be a regular identity of $S$.
Normalize its two sides to polynomials and put
$X=c(\mathbf P)=c(\mathbf Q)$. Choose a finite equational derivation
\[
 \mathbf P=\mathbf t_0,\mathbf t_1,\ldots,\mathbf t_n=\mathbf Q
\]
from $\Sigma$, with each step replacing one occurrence of a substituted
basis term in a context. Replace all variables outside $X$ by one fixed
member of $X$. Since there are no constants, $X$ is nonempty; the
endpoints are unchanged, and every intermediate term now uses only $X$.

We show inductively that
$\Gamma\vdash\mathbf t_i\preceq\mathbf P$. A one-hole context
distributes to a polynomial of the form
\[
 \mathbf C[-]=\mathbf D+
   \sum_{j=1}^{\ell}\boldsymbol\alpha_j[-]\boldsymbol\beta_j.
\]
Here either context word may be empty and $\mathbf D$ may be absent;
this notation introduces no empty terms or constant symbols. Suppose
that the next step replaces $\mathbf C[\sigma(\mathbf v)]$ by
$\mathbf C[\sigma(\mathbf u)]$. Each term
$\boldsymbol\alpha_j\sigma(\mathbf v)\boldsymbol\beta_j$
is absorbed by $\mathbf t_i$, hence by $\mathbf P$. The corresponding
term with $\mathbf u$ in place of $\mathbf v$ has content contained
in $X$. It is an instance of one of the four context forms in
$\mathcal A$. Applying \eqref{m:local-derivation} with
$\mathbf R=\mathbf P$ shows that this new term is absorbed by
$\mathbf P$. The unchanged summands of $\mathbf D$ are already
absorbed by $\mathbf P$. Hence
$\Gamma\vdash\mathbf t_{i+1}\preceq\mathbf P$.

It follows that $\Gamma\vdash\mathbf Q\preceq\mathbf P$.
Applying the same argument to the reversed derivation, with
$\mathbf R=\mathbf Q$, gives the opposite absorption. Thus $\Gamma$
derives every regular identity of $S$ and, by
Lemma~\ref{m:regularization}, is a finite basis for $S\times M_2$.

\textbf{From $S\times M_2$ to $S$.}
Let $E=\Id(S)$ and let $E_{\mathrm r}$ be its regular part. If
$E=E_{\mathrm r}$, the assertion follows immediately from
Lemma~\ref{m:regularization}. Otherwise choose an irregular identity
$\rho:\mathbf p\approx\mathbf q$ of $S$. We show that
\begin{equation}\label{m:one-irregular}
 E=\operatorname{Eq}(E_{\mathrm r}\cup\{\rho\}).
\end{equation}
Here $\operatorname{Eq}$ denotes equational closure.
After reversing $\rho$ if necessary, choose
$y\in c(\mathbf p)\setminus c(\mathbf q)$. Substitute $z+w$ for
$y$ and $z$ for every other variable. The resulting instance has the
form $\mathbf a(z,w)\approx\mathbf b(z)$, where both $z$ and $w$
occur in $\mathbf a$. Consequently $\rho$ implies
\begin{equation}\label{m:independence}
 \mathbf a(z,w)\approx\mathbf a(z,w').
\end{equation}

Let $\mathbf s\approx\mathbf t$ be any irregular identity of $S$.
After reversing its sides if necessary, write the nonempty set
$c(\mathbf s)\setminus c(\mathbf t)$ as
$\{x_1,\ldots,x_k\}$. The following substitutions are simultaneous
for all $i$:
\begin{align*}
 \mathbf s
 &\approx\mathbf s[x_i\mapsto\mathbf a(x_i,x_i)]\\
 &\approx\mathbf s[x_i\mapsto\mathbf a(x_i,\mathbf t)]\\
 &\approx\mathbf s[x_i\mapsto\mathbf a(\mathbf t,x_i)]\\
 &\approx\mathbf s[x_i\mapsto\mathbf a(\mathbf t,\mathbf t)]
 \approx\mathbf t.
\end{align*}
The second and fourth steps follow from \eqref{m:independence}.
The first, third and last steps hold in $S$ because
$\mathbf s\approx\mathbf t$ makes the value of $\mathbf s$
independent of every $x_i$. They are regular identities: their common
contents are respectively $c(\mathbf s)$,
$c(\mathbf s)\cup c(\mathbf t)$ and $c(\mathbf t)$.
They therefore belong to $E_{\mathrm r}$, proving
\eqref{m:one-irregular}.

A finite basis for $S\times M_2$ is a finite basis for
$E_{\mathrm r}$. Adding the single identity $\rho$ consequently gives
a finite basis for $S$, completing the proof.
\end{proof}

\begin{cor}\label{m:absorber-extension}
For every ai-semiring $S$, let $S^\infty$ be the extension obtained
by adjoining a new element that absorbs addition and multiplication.
Then $S^\infty$ is finitely based if and only if $S$ is finitely based.
\end{cor}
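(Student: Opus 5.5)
The corollary should follow at once from the two results already proved, Lemma~\ref{m:regularization} and Theorem~\ref{m:equivalence}. Finite basability is a property of the equational theory, hence of the generated variety. So it suffices to identify $\V(S^\infty)$ with a variety whose finite basability is already understood.

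First, I will invoke Lemma~\ref{m:regularization}, which gives $\V(S^\infty)=\V(S\times M_2)$. Two algebras generating the same variety have the same identities, so $\Id(S^\infty)=\Id(S\times M_2)$. Consequently $S^\infty$ is finitely based exactly when $S\times M_2$ is.

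Second, I will apply Theorem~\ref{m:equivalence}. It states, without any finiteness hypothesis on $S$, that $\FB(S\times M_2)$ holds if and only if $\FB(S)$ holds. Chaining the two equivalences gives $\FB(S^\infty)\Leftrightarrow\FB(S)$.

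There is no real obstacle; the only points to check are hypotheses. The lemma's congruence argument needs $S$ nonempty, which holds by the paper's standing convention. It also needs $S^\infty$ to be an ai-semiring. This is immediate: addition stays a semilattice with $\infty$ as top, and distributivity holds because every expression involving $\infty$ evaluates to $\infty$ on both sides. Both are already implicit in the lemma as stated.
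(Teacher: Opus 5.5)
Your proposal is correct and is exactly the paper's proof: Lemma~\ref{m:regularization} gives $\V(S^\infty)=\V(S\times M_2)$, and Theorem~\ref{m:equivalence} then gives $\FB(S\times M_2)\Leftrightarrow\FB(S)$. Your checks of the hypotheses (nonemptiness of $S$, and that $S^\infty$ is an ai-semiring) are also accurate.
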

\begin{proof}
Apply Lemma~\ref{m:regularization} and Theorem~\ref{m:equivalence}.
\end{proof}

\subsection{The constant-top factor \texorpdfstring{$T_2$}{T2}}
\label{t:subsection}

\begin{thm}\label{t:equivalence}
For every ai-semiring $S$, finite or infinite,
\[
\FB(S\times T_2)\quad\Longleftrightarrow\quad\FB(S).
\]
More explicitly, a finite basis $\Sigma$ of $S$ gives the finite basis
$\Sigma^{T}$ in \eqref{t:lift} for $S\times T_2$, relative to the
ai-semiring axioms.  Conversely, a finite basis for $S\times T_2$ can
be extended to one for $S$ by at most one additional identity.
\end{thm}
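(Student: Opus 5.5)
The plan is to first compute $\Id(T_2)$ explicitly and then follow the pattern of the proof of Theorem~\ref{m:equivalence}: guard identities for the forward direction, and a single extra identity for the converse. In $T_2$ every word of length at least $2$ evaluates to $1$, and a single variable evaluates to its own value. So a polynomial $\mathbf P$ evaluates to the constant $1$ if $\operatorname{supp}(\mathbf P)$ contains a word of length $\geq2$. Otherwise it evaluates to the join of its variables. Call a polynomial \emph{nonlinear} if it contains such a word. Then an absorption $\mathbf p\preceq\mathbf q$ holds in $T_2$ if and only if $\mathbf q$ is nonlinear, or $\mathbf p$ is linear and every variable of $\mathbf p$ occurs as a one-letter summand of $\mathbf q$. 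Consequently, by \eqref{intro:product-join}, $\Id(S\times T_2)$ consists of the $\AI$-trivial linear identities together with those identities of $S$ both of whose sides are nonlinear.

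\textbf{From $S$ to $S\times T_2$.} Let $\Sigma$ be a finite basis of $S$, and form the finite list $\mathcal A$ of oriented absorptions in the four context forms $\mathbf p\preceq\mathbf q$, $a\mathbf p\preceq a\mathbf q$, $\mathbf p b\preceq\mathbf q b$, $a\mathbf p b\preceq a\mathbf q b$, exactly as in the $M_2$ proof. Define $\Sigma^T$ as follows.
\begin{equation}\label{t:lift}
 \Sigma^T=\AI\cup\{\mathbf p\preceq\mathbf q+xy:\ \mathbf p\preceq\mathbf q\in\mathcal A\},
\end{equation}
where $x,y$ are fresh. Each member holds in $S$, since $\mathbf p\preceq\mathbf q$ does there, and in $T_2$, since the right side is nonlinear. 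The key local derivation property is the analogue of \eqref{m:local-derivation}. Suppose $\Sigma^T\vdash\theta(\mathbf q)\preceq\mathbf R$ and $\mathbf R$ is nonlinear. Then $\Sigma^T\vdash\theta(\mathbf p)\preceq\mathbf R$. To see this, pick a summand $\mathbf w$ of $\mathbf R$ with $|\mathbf w|\geq2$, factor it as $\mathbf w=\mathbf w_1\mathbf w_2$ with both factors nonempty words, and substitute $x\mapsto\mathbf w_1$, $y\mapsto\mathbf w_2$. No induction on substitution size is needed here, unlike the $M_2$ case.

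Now let $\mathbf P\approx\mathbf Q$ be an identity of $S\times T_2$ with both sides nonlinear. Take a $\Sigma$-derivation $\mathbf t_0,\dots,\mathbf t_n$ from $\mathbf P$ to $\mathbf Q$. Exactly as in Theorem~\ref{m:equivalence}, distribute each one-hole context to a sum of terms $\boldsymbol\alpha_j[-]\boldsymbol\beta_j$. Each replaced summand is then an instance of a context form in $\mathcal A$. Applying the local property with $\mathbf R=\mathbf P$ shows inductively that $\Sigma^T\vdash\mathbf t_i\preceq\mathbf P$. Symmetrically, with $\mathbf R=\mathbf Q$, one gets the reverse absorption. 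This direction is routine.

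\textbf{From $S\times T_2$ to $S$.} Let $E=\Id(S)$ and $E_T=\Id(S\times T_2)$. If $E=E_T$ there is nothing to do. Otherwise pick $\rho\in E\setminus E_T$ and aim to show $E=\operatorname{Eq}(E_T\cup\{\rho\})$. By the description above, after reversing if necessary, $\rho$ has a linear side $\mathbf p=x_1+\dots+x_m$. I would first identify all the variables of $\mathbf p$ to a single variable $x$, and then add $x$ to both sides if necessary. This is meant to extract from $\rho$ an identity of one of two shapes. The first shape is $x\approx x+\mathbf u$ with $\mathbf u$ nonlinear (which may involve other variables). The second is an irregular linear-to-linear identity.

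Then take an arbitrary $\mathbf s\approx\mathbf t$ in $E\setminus E_T$, with $\mathbf s$ a linear sum of variables $x_i$. Use the extracted identity to pass from $\mathbf s$ to $\mathbf s+\sum_i\mathbf u(x_i,\dots)$, which is nonlinear. Next pass to $\mathbf t+\mathbf s+\sum_i\mathbf u(x_i,\dots)$ by an identity of $E_T$, which is valid because both sides are nonlinear and hold in $S$. Finally, reabsorb the added terms into $\mathbf t$. The hardest step is this last reabsorption, together with the degenerate cases in which $\mathbf t$ is also linear, or $\rho$ is irregular as in the $M_2$ argument. For the irregular cases I would first try to reuse the independence trick \eqref{m:independence} from the proof of Theorem~\ref{m:equivalence}.
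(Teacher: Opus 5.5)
Your forward direction is correct, but it uses a different list from the paper's. The paper's $\Sigma^{T}$ in \eqref{t:lift} consists only of $\mathbf p+zt\approx\mathbf q+zt$, $z\mathbf p\approx z\mathbf q$ and $\mathbf pz\approx\mathbf qz$. Lemma~\ref{t:guarded} then replays each $\Sigma$-step equationally, with a fixed word of length at least two added to both sides. You instead guard all eight oriented context forms by $xy$ and, as in the $M_2$ proof, absorb step by step into the nonlinear targets $\mathbf P$ and $\mathbf Q$. Both give finite bases; your guard is actually needed only on the context-free forms, since $a\mathbf q$, $\mathbf qb$ and $a\mathbf qb$ are already nonlinear. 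Strictly speaking, though, you prove the basis property for your own list, not for the $\Sigma^T$ named in the statement.

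The converse has a genuine gap: it stops exactly where an argument is required. You leave three things open: the ``reabsorption'', the case of a linear $\mathbf t$, and the case of a linear-to-linear $\rho$. The independence trick you propose is not the relevant tool; it cannot even be set up from a regular $\rho$ such as $x\approx x^2$.

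The missing idea is the paper's observation that a nontrivial ai-semiring satisfies no identity between two sums of variables with different supports. To see this, give a variable occurring on only one side the value $a$ and all other variables the value $b$. This yields either $a=b$, or $a+b=b$ and, by symmetry, $a+b=a$. Hence if $S$ is trivial you simply adjoin $x\approx y$. Otherwise every member of $E\setminus E_T$, including $\rho$, has exactly one linear side. So only your first shape occurs and $\mathbf t$ is always nonlinear. The reabsorption is then immediate: $\mathbf s+\mathbf U\approx\mathbf t$ holds in $S$ and has two nonlinear sides, so it already lies in $E_T$.

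The paper avoids any case split. Identifying all variables of $\rho$ gives $x\approx\mathbf r(x)$ with $\mathbf r(x)$ nonlinear. Every identity $\mathbf p\approx\mathbf q$ of $S$ then follows as $\mathbf p\approx\mathbf r(\mathbf p)\approx\mathbf r(\mathbf q)\approx\mathbf q$, where the middle step is an identity of $S\times T_2$.
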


Call an ai-semiring term \emph{proper} if its distributive normal form
contains a word of length at least two.  A term that is not proper is a
nonempty sum of variables.  On $T_2$, every proper term has the constant
value $1$.  Consequently, an identity holds in $T_2$ if and only if both
sides are proper, or both sides are sums of the same set of variables.
In particular, an identity with exactly one proper side fails in $T_2$:
assigning $0$ to every variable separates its two sides.

\begin{lem}\label{t:guarded}
Let $\Sigma$ be any set of ai-semiring identities. All deductions
in this lemma are relative to the ai-semiring axioms. For each
$\mathbf p\approx\mathbf q$ in $\Sigma$, choose variables $z,t$ absent
from that identity, and let $\Sigma^{T}$ consist of the identities
\begin{equation}\label{t:lift}
\begin{split}
\mathbf p+zt&\approx\mathbf q+zt,\\
z\mathbf p&\approx z\mathbf q,\\
\mathbf pz&\approx\mathbf qz.
\end{split}
\end{equation}
If $\Sigma\vdash\mathbf u\approx\mathbf v$, then, for every word
$\mathbf w$ of length at least two,
\[
\Sigma^{T}\vdash\mathbf u+\mathbf w\approx\mathbf v+\mathbf w.
\]
If, in addition, $\mathbf u$ and $\mathbf v$ are proper, then
$\Sigma^{T}\vdash\mathbf u\approx\mathbf v$.
\end{lem}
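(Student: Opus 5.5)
The plan is to induct along an equational derivation from $\Sigma$, with a fixed word $\mathbf w$ of length at least two added as a summand throughout. Fix such a $\mathbf w$ and define $\mathbf s\sim\mathbf t$ to mean $\Sigma^{T}\vdash\mathbf s+\mathbf w\approx\mathbf t+\mathbf w$. This relation is reflexive, symmetric and transitive, so it suffices to show that a single derivation step $\mathbf C[\sigma(\mathbf p)]\to\mathbf C[\sigma(\mathbf q)]$, with $\mathbf p\approx\mathbf q$ (or its reverse) in $\Sigma$, satisfies $\mathbf C[\sigma(\mathbf p)]\sim\mathbf C[\sigma(\mathbf q)]$. Composing along the chain $\mathbf u=\mathbf t_0,\ldots,\mathbf t_n=\mathbf v$ then gives the first assertion.

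For a single step, distribute the one-hole context modulo $\AI$ as in the proof of Theorem~\ref{m:equivalence}:
\[
\mathbf C[-]=\mathbf D+\sum_{j=1}^{\ell}\boldsymbol\alpha_j[-]\boldsymbol\beta_j,
\]
where the context words may be empty and $\mathbf D$ may be absent. I would replace the summands $\boldsymbol\alpha_j\sigma(\mathbf p)\boldsymbol\beta_j$ by $\boldsymbol\alpha_j\sigma(\mathbf q)\boldsymbol\beta_j$ one index $j$ at a time, keeping all other summands and $\mathbf w$ as additive context. Since $\Sigma^T$-provable equalities are closed under adding summands, it suffices to derive each individual replacement, for which there are three cases.
\begin{enumerate}[label=\textup{(\alph*)}]
\item If $\boldsymbol\alpha_j$ is nonempty, instantiate $z\mathbf p\approx z\mathbf q$ with $z\mapsto\boldsymbol\alpha_j$ and the variables of $\mathbf p,\mathbf q$ via $\sigma$. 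Then multiply on the right by $\boldsymbol\beta_j$ if it is present.
\item If $\boldsymbol\alpha_j$ is empty and $\boldsymbol\beta_j$ is nonempty, argue symmetrically with $\mathbf pz\approx\mathbf qz$.
\item If both context words are empty, the summand is the bare $\sigma(\mathbf p)$. Here the guard $\mathbf w$ is used: write $\mathbf w=\mathbf w_1\mathbf w_2$ with both factors nonempty words, which is possible since $|\mathbf w|\geq2$. Instantiate $\mathbf p+zt\approx\mathbf q+zt$ with $z\mapsto\mathbf w_1$ and $t\mapsto\mathbf w_2$. This gives $\sigma(\mathbf p)+\mathbf w\approx\sigma(\mathbf q)+\mathbf w$, and then the remaining summands are added.
\end{enumerate}
The variables $z,t$ are absent from $\mathbf p,\mathbf q$, so these instantiations are compatible with $\sigma$. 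This is the only place the extra summand is needed, and it is the crux of the argument: it explains why the first conclusion carries $+\mathbf w$. It is also the step to check most carefully, since the substitution must send $z$ and $t$ to nonempty terms.

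For the second assertion, let $\mathbf w_u\in\operatorname{supp}(\mathbf u)$ and $\mathbf w_v\in\operatorname{supp}(\mathbf v)$ be words of length at least two; these exist because $\mathbf u$ and $\mathbf v$ are proper. By idempotence $\mathbf u=\mathbf u+\mathbf w_u$ and $\mathbf v=\mathbf v+\mathbf w_v$ modulo $\AI$. Applying the first part with $\mathbf w_u$ gives $\mathbf u\approx\mathbf v+\mathbf w_u$, and applying it with $\mathbf w_v$ gives $\mathbf u+\mathbf w_v\approx\mathbf v$. Adding $\mathbf w_u$ to the latter yields
\[
\mathbf u+\mathbf w_v+\mathbf w_u\approx\mathbf v+\mathbf w_u\approx\mathbf u .
\]
Combining these,
\[
\mathbf u\approx\mathbf u+\mathbf w_u+\mathbf w_v\approx\mathbf u+\mathbf w_v\approx\mathbf v,
\]
where the middle step uses $\mathbf u+\mathbf w_u=\mathbf u$. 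Hence $\Sigma^T\vdash\mathbf u\approx\mathbf v$.
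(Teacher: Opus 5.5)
Your proof is correct and follows essentially the same route as the paper: you induct over the derivation steps, apply $z\mathbf p\approx z\mathbf q$ or $\mathbf pz\approx\mathbf qz$ when a multiplicative context is present, and split $\mathbf w=\mathbf w_1\mathbf w_2$ to apply $\mathbf p+zt\approx\mathbf q+zt$ when it is absent, then combine the two guarded equalities for proper $\mathbf u,\mathbf v$. The only difference is cosmetic: the paper keeps the multiplicative contexts as polynomials $\mathbf A,\mathbf B$, so one lifted instance handles each step, whereas you distribute into word contexts and replace the summands one at a time.
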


\begin{proof}
Write $\mathbf w=\mathbf a\mathbf b$, where both factors are nonempty.
Modulo the ai-semiring axioms, an elementary step in an equational
deduction from $\Sigma$ replaces
\[
\mathbf A\,\varphi(\mathbf p)\,\mathbf B+\mathbf C
\quad\text{by}\quad
\mathbf A\,\varphi(\mathbf q)\,\mathbf B+\mathbf C,
\]
or conversely, where $\varphi$ is a substitution and
$\mathbf p\approx\mathbf q\in\Sigma$.  Either multiplicative context
$\mathbf A,\mathbf B$, and the additive context $\mathbf C$, may be
absent.  This description follows by distributing a one-hole term
context; the contexts $\mathbf A$ and $\mathbf B$ can themselves be
nonempty sums of words.  Absence of a context is only a convention and
does not introduce constants into the signature. More explicitly, the
description follows by induction on the one-hole context: addition
changes only $\mathbf C$, left multiplication changes $\mathbf A$
and $\mathbf C$, and right multiplication changes $\mathbf B$
and $\mathbf C$.

If both multiplicative contexts are absent, the first identity in
\eqref{t:lift}, with $z\mapsto\mathbf a$ and
$t\mapsto\mathbf b$, gives this step after $\mathbf w$ has been added
to both sides.  If $\mathbf A$ is present, the second identity gives
$\mathbf A\varphi(\mathbf p)\approx
\mathbf A\varphi(\mathbf q)$; multiplying on the right by
$\mathbf B$, when present, and adding the remaining context gives the
required step.  If only $\mathbf B$ is present, use the third identity.
Applying this argument to every step proves the first assertion.

For the second assertion, choose a word $\mathbf w$ of length at least
two occurring in $\mathbf u$, and such a word $\mathbf r$ occurring in
$\mathbf v$.  The first assertion and additive idempotence give
\[
\mathbf u\approx\mathbf v+\mathbf w,
\qquad
\mathbf v\approx\mathbf u+\mathbf r.
\]
The first identity implies
$\mathbf u+\mathbf v\approx\mathbf u$, and the second implies
$\mathbf u+\mathbf v\approx\mathbf v$.  Thus
$\mathbf u\approx\mathbf v$, as required.
\end{proof}

\begin{proof}[Proof of Theorem~\ref{t:equivalence}]
\textbf{From $S$ to $S\times T_2$.}
Suppose that $\Sigma$ is a finite basis for $S$.  Every identity of
$\Sigma^{T}$ holds in $S$ and has two proper sides, so it also holds
in $T_2$.  Hence $\Sigma^{T}$ holds in $S\times T_2$.
Let $\mathbf u\approx\mathbf v$ be an identity of this product.
If neither side is proper, the two sides have the same set of variables,
and the identity follows from the additive semilattice axioms.  Otherwise,
both sides are proper.  Since the identity holds in $S$, it follows from
$\Sigma$, and Lemma~\ref{t:guarded} then derives it from $\Sigma^{T}$.
This proves that $\Sigma^{T}$ is a finite basis.

\textbf{From $S\times T_2$ to $S$.}
Let $\Gamma$ be a finite basis for $S\times T_2$.  If $S$ is trivial,
then adjoining $x\approx y$ to $\Gamma$ gives a finite basis for $S$.
If $T_2\in\V(S)$,
then $S$ and $S\times T_2$ have exactly the same identities, so
$\Gamma$ is already a basis for $S$.

Suppose now that $S$ is nontrivial and $T_2\notin\V(S)$.
There is an identity of $S$ that fails in $T_2$.  A nontrivial
ai-semiring cannot satisfy an identity between two sums of variables
having different supports.  Indeed, assign an arbitrary element $a$ to
a variable occurring on only one side and an arbitrary element $b$ to
all other variables.  The identity yields either $a=b$ or $a+b=b$;
in the latter case, interchanging $a$ and $b$ also yields $a+b=a$.
Either conclusion would make the algebra trivial.  The chosen identity
therefore has exactly one proper side.  Identifying all its variables
with $x$ gives an identity of $S$ of the form
\begin{equation}\label{t:unary}
x\approx\mathbf r(x),
\end{equation}
where $\mathbf r(x)$ is a nonempty sum of positive powers of $x$ and
at least one exponent is at least two.

We claim that adjoining \eqref{t:unary} to $\Gamma$ gives a basis for $S$.
Let $\mathbf p\approx\mathbf q$ be any identity of $S$.  Both
$\mathbf r(\mathbf p)$ and $\mathbf r(\mathbf q)$ are proper terms.
The identity
$\mathbf r(\mathbf p)\approx\mathbf r(\mathbf q)$ holds in $S$
and, because both sides are proper, also in $T_2$. It therefore holds
in $S\times T_2$ and follows from
$\Gamma$.  Together with \eqref{t:unary}, this gives
\[
\mathbf p\approx\mathbf r(\mathbf p)
\approx\mathbf r(\mathbf q)\approx\mathbf q.
\]
All identities in the proposed finite basis hold in $S$, so the claim
and the theorem follow.
\end{proof}

\section{Counterexamples for the remaining four factors}
\label{sec:counterexamples}

The two implications require different witnesses, all of which will
be finite. We first construct a sixty-five-element finitely based
semiring for the $L_2$ and $N_2$ counterexamples; its multiplicative
opposite gives the $R_2$ counterexample. We then treat $D_2$ in both
directions and give the remaining reverse-direction counterexamples.

\subsection{Finite finitely based sources for \texorpdfstring{$L_2,R_2,N_2$}{L2, R2, N2}}

We construct a $65$-element finitely based ai-semiring $A$ such that
$A\times L_2$ and $A\times N_2$ are nonfinitely based. Its opposite
provides the corresponding counterexample for $R_2$. The construction
and the proofs of nonfinite basability are given explicitly below.

\subsubsection{A closure description}

Let $\mathcal W$ be the variety defined, relative to $\AI$, by
\begin{align}
 t&\preceq abc,\label{f65:top}\\
 u+tu+tv&\approx v+tu+tv,\label{f65:mark}\\
 xy&\preceq yx+x.\label{f65:reverse}
\end{align}
The variables $t,a,b,c$ in \eqref{f65:top} are independent.
Every threefold product is the same additively greatest element,
which we denote by $\Omega$. This is an element of the algebra,
not a constant symbol. It is also a multiplicative zero: for example,
if $\Omega=abc$, then $(abc)d=ab(cd)=\Omega$, and the left-sided
calculation is analogous. Consequently every polynomial containing
a word of length at least three has value $\Omega$.

A polynomial all of whose words have length at most two is described
by its singleton support $U$ and its directed quadratic support $E$.
We regard a word $xy\in E$ as an edge from $x$ to $y$, and call the
vertices in $U$ marked. Vertices in the following rules need not be
distinct.

\begin{lem}\label{f65:closure}
The exact absorption closure in $\mathcal W$ of a nonempty support
$(U,E)$ containing no word of length at least three is generated by
\begin{align}
 v\in U,\quad tu,tv\in E&\quad\Longrightarrow\quad u\in U,
 \label{f65:rule-mark}\\
 x\in U,\quad yx\in E&\quad\Longrightarrow\quad xy\in E.
 \label{f65:rule-reverse}
\end{align}
In particular, these rules add no word of length at least three,
no new unordered edge, and no new loop.
\end{lem}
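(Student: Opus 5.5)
The plan is to use Lemma~\ref{lr:closure} with $\Sigma$ the three defining identities \eqref{f65:top}, \eqref{f65:mark}, \eqref{f65:reverse}. The closure then contains every word obtained by applying the listed operations to the support of a polynomial. I will prove two inclusions.

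\textbf{Soundness.} The rules \eqref{f65:rule-mark} and \eqref{f65:rule-reverse} are instances of the operations. For \eqref{f65:rule-mark}, use \eqref{f65:mark} oriented as $u\preceq v+tu+tv$, with empty contexts and variables sent to variables. For \eqref{f65:rule-reverse}, use \eqref{f65:reverse}, which gives $yx\preceq xy+y$; renaming the variables yields the premises $yx\in E$, $x\in U$ and the conclusion $xy$. So every word the rules generate lies in the closure.

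\textbf{Completeness.} Let $C$ be the closure of $(U,E)$ under the two rules. The main step is to check that the set of words of $C$ is already closed under every operation of Lemma~\ref{lr:closure}. It then contains the full closure, because that closure is the smallest closed set.

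Since $C$ contains no word of length $\ge3$, an operation can only fire if every word of $\boldsymbol\ell\sigma(\mathbf P)\mathbf r$ has length $\le2$. We go through the three identities.

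\begin{itemize}
\item \textbf{Identity \eqref{f65:top}.} Here $\mathbf P$ is $t$ for the orientation $abc\preceq t$, and $\mathbf P=t+abc$ for the orientation $t\preceq t+abc$. In both cases $\mathbf P$, or $\sigma(\mathbf P)$ after distribution, contains a word of length $\ge3$ or a word producing one. So the premise fails, except for the orientation $t\preceq t+abc$, which has the same issue. Hence it never fires.
\item \textbf{Identity \eqref{f65:mark}.} The premise contains $\boldsymbol\ell\sigma(t)\sigma(u)\mathbf r$. For this to have length $\le2$, the contexts must be empty and $\sigma(t),\sigma(u),\sigma(v)$ must be sums of variables. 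Write $\sigma(t)=T$, $\sigma(u)=U'$, $\sigma(v)=V'$ as sets of variables. The premise says that $V'\subseteq U$ and all edges from $T$ into $U'\cup V'$ are in $E$. The conclusion requires $U'\subseteq U$. Fix any $t_0\in T$ and $v_0\in V'$; then for each $u\in U'$, rule \eqref{f65:rule-mark} gives $u\in U$. The other orientation is symmetric.
\item \textbf{Identity \eqref{f65:reverse}.} Consider the orientation $xy\preceq yx+x+xy$, with sums of variables $X,Y$ substituted and empty contexts. The only possible nonempty context would occur when both $X$ and $Y$ are absent, which is impossible. The word $\boldsymbol\ell x$ could have length $2$ with $x$ a single variable and a one-letter context, but then $\boldsymbol\ell yx$ has length $3$, so this case cannot arise. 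The premise gives $X\subseteq U$ and the edges $Y\times X$ in $E$. The conclusion $X\times Y\subseteq E$ then follows from \eqref{f65:rule-reverse} edge by edge. The orientation $yx+x\preceq xy$ is impossible, since a singleton is never absorbed by length-$2$ words alone.
\end{itemize}

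I expect this careful enumeration of context and substitution shapes, especially the length bookkeeping, to be the main obstacle.

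\textbf{Final claim.} The rules only reverse an existing edge or mark an existing variable. Hence they add no word of length $\ge3$ and no new unordered edge. A loop $xx$ can only arise by reversing the loop $xx$ itself, so no new loop appears either.
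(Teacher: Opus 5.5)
Your argument follows the paper's route: soundness of the two rules as instances of \eqref{f65:mark} and \eqref{f65:reverse}, then closedness of the rule-closure under every contextual polynomial instance allowed in Lemma~\ref{lr:closure}, since a quadratic premise word forces empty contexts and letter-only images; that lemma then gives exactness. One bookkeeping slip needs fixing. \eqref{f65:top} abbreviates $t+abc\approx abc$, so its two directions have premises $abc$ and $t+abc$, and both contain an instance of $abc$ of length at least three. There is no rule with premise $t$ and conclusion $abc$; such a rule would fire and create cubic words, so your claim that it never fires is false, though the rule was never needed. Similarly, the two directions of \eqref{f65:reverse} are $xy+yx+x\preceq yx+x$, which you analyze correctly, and $yx+x\preceq xy+yx+x$, whose conclusion already lies in its premise.
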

\begin{proof}
Both rules are absorption consequences of the defining identities;
the two directions of \eqref{f65:mark} are covered by interchanging
$u$ and $v$. For completeness, apply Lemma~\ref{lr:closure}.
A contextual instance of the premise $abc$ in \eqref{f65:top}
always contains a word of length at least three, and hence cannot
be applied to the support under consideration. If a contextual
polynomial instance of $v+tu+tv$ contains only words of length at
most two, both multiplicative contexts are empty, and every word
in the images of $t,u,v$ has length one. For any selected letter
in the image of $u$, selecting a letter in each of the images of
$t$ and $v$ produces an instance of \eqref{f65:rule-mark}.
This accounts for every word that can be added. The same argument
for the premise $yx+x$ gives \eqref{f65:rule-reverse}.
Thus arbitrary polynomial substitutions yield no additional rules.
The last assertions follow directly from the two displayed rules.
\end{proof}

Let $(U,E)$ now be closed under these rules. For a vertex $v$, set
\[
 m(v)=\begin{cases}1&v\in U,\\0&v\notin U,\end{cases}
 \qquad
 a(v)=\begin{cases}1&vw\in E\text{ for some }w\in U,\\
                    0&\text{otherwise}.\end{cases}
\]
Every edge $vw\in E$ satisfies
\begin{equation}\label{f65:types-equations}
 m(w)=a(v),\qquad m(w)=1\ \Longrightarrow\ a(w)=m(v).
\end{equation}
Indeed, a marked successor of $v$ forces all its successors to be
marked by \eqref{f65:rule-mark}. If $w$ is marked,
\eqref{f65:rule-reverse} also gives $wv\in E$, to which the first
equality can be applied. Denote the four pairs $(m(v),a(v))$ by
\[
 F=(1,1),\qquad A=(1,0),\qquad B=(0,1),\qquad C=(0,0).
\]
It follows from \eqref{f65:types-equations} that every edge has a
type pair in
\begin{equation}\label{f65:allowed}
 \mathcal D=\{FF,AB,AC,BA,CB,CC\}.
\end{equation}
The marked types are precisely $F$ and $A$.

\subsubsection{Six finite syntactic quotients}

For each $IJ\in\mathcal D$, use the alphabet
$X=\{f,a,b,c,p,q\}$, where $f,a,b,c$ have types $F,A,B,C$,
respectively, and $p,q$ have types $I,J$, respectively. Let
$K_{IJ}\subseteq X^+$ consist of all singleton letters of marked
type and all quadratic words whose type pair belongs to
$\mathcal D$, except that $pq$ is omitted; if $I\in\{F,A\}$,
omit $qp$ as well. No longer word belongs to $K_{IJ}$.

Each $K_{IJ}$ is closed under \eqref{f65:rule-mark} and
\eqref{f65:rule-reverse}. To verify the first rule, a vertex with
an allowed marked successor has type $F$ or $B$, and all its
allowed successors are marked. Removing edges cannot invalidate
this implication. For the second rule, an allowed edge with marked
target has type $FF$ or $BA$, and its reverse is allowed. A reversal
could produce the deleted edge $pq$ only when $p$ is marked and
$qp$ is present; our additional deletion excludes exactly this
possibility. A reversal producing a deleted $qp$ would require the
already deleted edge $pq$.

Let $\mathcal P_f^+(X^+)$ be the semiring of nonempty finite
languages over $X$, under union and concatenation. For a language
$K\subseteq X^+$, define
\begin{equation}\label{f65:syntactic}
 P\equiv_K Q
 \quad\Longleftrightarrow\quad
 \bigl(\ell P r\subseteq K\ \Longleftrightarrow\ \ell Qr\subseteq K\bigr)
 \quad\text{for every }\ell,r\in X^*.
\end{equation}
Empty contexts are allowed in this definition; they do not add
constants to the signature. This is a semiring congruence. Indeed,
a subset test for a union is the conjunction of the corresponding
tests, and a subset test after multiplication by a finite language
is the conjunction of the tests for its individual word contexts.
Put
\[
 Q_{IJ}=\mathcal P_f^+(X^+)/{\equiv_{K_{IJ}}}.
\]

For clarity, the sizes of these quotients can be checked without
constructing any multiplication tables. The signature of $P$ is
the $13$-tuple of truth values of the subset tests with contexts
\begin{equation}\label{f65:contexts}
 (\varepsilon,\varepsilon),\quad
 (f,\varepsilon),(a,\varepsilon),(b,\varepsilon),
 (c,\varepsilon),(p,\varepsilon),(q,\varepsilon),\quad
 (\varepsilon,f),(\varepsilon,a),(\varepsilon,b),
 (\varepsilon,c),(\varepsilon,p),(\varepsilon,q).
\end{equation}
All other tests are false. Union corresponds to coordinatewise
intersection of signatures. A language containing a word of length at least two
has all twelve nonempty-context coordinates equal to zero; its
signature is therefore either the signature of $f+a$ or that of
$f+b$. These are, respectively, $(1,0,\ldots,0)$ and
$(0,0,\ldots,0)$. Every remaining language is a nonempty subset of
$X$. Table~\ref{f65:quotient-table} lists one representative of
each possible signature; addition in the table means union of
singleton words. For each row, the listed signatures are distinct,
contain the six singleton signatures, and are closed under
intersection with each of those six signatures. These assertions
follow by the thirteen tests in \eqref{f65:contexts}, using the
explicit definition of $K_{IJ}$. Thus the table is an exhaustive
finite specification of the quotient elements, not just an upper
bound for their number.

\begin{table}[ht]
\centering
\small
\setlength{\tabcolsep}{5pt}
\caption{Representatives of the elements of the six syntactic quotients.}
\label{f65:quotient-table}
\begin{tabular}{@{}clc@{}}
\toprule
$IJ$ & Representatives & $|Q_{IJ}|$\\
\midrule
$FF$ & $f,a,b,c,p,q,\ p+q,\ a+c,\ b+c,\ f+a,\ f+b$ & $11$\\
$AB$ & $f,a,b,c,p,q,\ c+p,\ a+c,\ c+q,\ b+c,\ f+a,\ f+b$ & $12$\\
$AC$ & $f,a,b,c,p,q,\ c+p,\ a+c,\ b+q,\ b+c,\ f+a,\ f+b$ & $12$\\
$BA$ & $f,a,b,c,p,q,\ a+c,\ b+c,\ f+a,\ f+b$ & $10$\\
$CB$ & $f,a,b,c,p,q,\ a+p,\ a+c,\ c+q,\ b+c,\ f+a,\ f+b$ & $12$\\
$CC$ & $f,a,b,c,p,q,\ a+p,\ a+c,\ b+q,\ p+q,\ b+c,\ f+a,\ f+b$ & $13$\\
\bottomrule
\end{tabular}
\end{table}

\begin{lem}\label{f65:generate}
The six quotients belong to $\mathcal W$, and
\[
 \mathcal W=\V(Q_{FF},Q_{AB},Q_{AC},Q_{BA},Q_{CB},Q_{CC}).
\]
\end{lem}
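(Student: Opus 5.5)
The proof has two parts: first check that each $Q_{IJ}$ satisfies the three defining identities of $\mathcal W$, and then show that every absorption failing in $\mathcal W$ fails in one of the six quotients.

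\textbf{Step 1: the quotients lie in $\mathcal W$.} For \eqref{f65:top}, every language containing a word of length at least three has the all-false signature. This is the signature of $f+b$, and it is the least informative one, hence the additive top. So $abc$ evaluates to the top and absorbs any $t$. For \eqref{f65:mark} and \eqref{f65:reverse}, use the syntactic description \eqref{f65:syntactic}. An absorption $\mathbf P\preceq\mathbf Q$ holds in $\mathcal P_f^+(X^+)/{\equiv_K}$ exactly when, for all nonempty finite languages substituted for the variables and all contexts $\ell,r$, the inclusion $\ell\sigma(\mathbf Q)r\subseteq K$ implies $\ell\sigma(\mathbf P)r\subseteq K$.
\begin{itemize}
\item If either context is nonempty, or some substituted language contains a word of length at least two, then the word lengths exceed two on the relevant side. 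Both sides then contain words of length at least three, both inclusions fail, and the implication is vacuous. The only exceptions are the degenerate cases with short words, which are exactly those analysed in Lemma~\ref{f65:closure}.
\item In the remaining case the images are sets of letters and the contexts are empty. The required implication then says precisely that $K$ is closed under \eqref{f65:rule-mark} and \eqref{f65:rule-reverse}, applied letterwise.
\end{itemize}
Closure of each $K_{IJ}$ under these rules was verified just before the lemma, so Step~1 reduces to this already-proved closure.

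\textbf{Step 2: $\mathcal W$ is generated by the six quotients.} It suffices to show that every absorption $\mathbf w\preceq\mathbf P$ failing in $\mathcal W$ fails in some $Q_{IJ}$. By Lemma~\ref{lr:closure}, failure means $\mathbf w\notin\operatorname{cl}(\operatorname{supp}\mathbf P)$.
\begin{itemize}
\item If $\mathbf P$ contains a word of length at least three, then $\mathbf P$ evaluates to $\Omega$ and the closure is everything, so this case cannot occur.
\item Thus $\operatorname{supp}\mathbf P$ is a pair $(U,E)$ and its closure $(U',E')$ is given by Lemma~\ref{f65:closure}.
\item If $\mathbf w$ has length at least three, then $\mathbf w\preceq\mathbf P$ fails in any quotient where $\mathbf P$ does not evaluate to the top. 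Assigning letters of types $F$ and $A$ to the variables, whose words all lie in $K$, achieves this. This case should be handled first.
\item Otherwise $\mathbf w$ is a singleton $u\notin U'$ or a quadratic word $xy\notin E'$.
\end{itemize}

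\textbf{Step 2, continued: the typing assignment.} Type the vertices of the closed support by $(m,a)$. By \eqref{f65:types-equations}, every edge of $E'$ has type in $\mathcal D$, and marked vertices are exactly those of type $F$ or $A$. If $\mathbf w=u$ is an unmarked singleton, substitute for each variable the letter of its type in $\{f,a,b,c\}$ in any quotient. Then $\sigma(\mathbf P)\subseteq K$ while $u$ receives type $B$ or $C$, which is not in $K$; in particular this works in $Q_{FF}$. If $\mathbf w=xy\notin E'$, let $I,J$ be the types of $x,y$.
\begin{itemize}
\item If $IJ\notin\mathcal D$, the plain typing assignment again separates the two sides.
\item If $IJ\in\mathcal D$, send $x\mapsto p$ and $y\mapsto q$, and send every other vertex to its type letter in $Q_{IJ}$. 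The deleted word $pq$ then witnesses failure.
\end{itemize}
We must check that $\sigma(\mathbf P)\subseteq K_{IJ}$. Edges of $E'$ map to allowed words, so we need that neither $pq$ nor, when $I$ is marked, $qp$ arises. The word $pq$ would come only from $xy$, which is absent. The word $qp$ would come from $yx\in E'$ with $x$ marked, and then \eqref{f65:rule-reverse} forces $xy\in E'$, a contradiction. Loops need separate treatment when $x=y$: then $p=q$ should really be one letter, and the assignment must send $x$ to a single letter. I expect this loop case, where $x=y$ and $I=J$ with $II\in\{FF,CC\}$, to be the main obstacle. My first attempt would be to map $x$ to $p+q$, or to check that the distinct letters $p,q$ of equal type still separate via the word $pq$ inside $\sigma(xx)\supseteq\{pq\}$.
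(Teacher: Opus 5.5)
Your route is the same as the paper's. You reduce \eqref{f65:mark} and \eqref{f65:reverse} to closure of $K_{IJ}$ under the two word rules, use the typing assignment for unmarked singletons and forbidden pairs, and use $x\mapsto p$, $y\mapsto q$ in $Q_{IJ}$ when $x\neq y$. However, the step you leave open, $\mathbf w=x^2$ with $x$ of type $F$ or $C$, is a genuine gap in the write-up: the typing assignment sends $x^2$ to the allowed word $ff$ or $cc$, so it does not separate. Your first idea is exactly what the paper does, and it works once checked. In $Q_{FF}$ (respectively $Q_{CC}$), send $x$ to the class of $p+q$ and every other vertex to its type letter. Because $x^2\notin E'$, every quadratic word of the closed premise contains $x$ at most once. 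Its image therefore consists of generic words or of words $pz',qz',z'p,z'q$ with $z'\in\{f,a,b,c\}$. These keep the type pair of the original edge, since both $p$ and $q$ have the type of $x$, and they are never $pq$ or $qp$. If $x\in U'$, then $x$ has type $F$, and $p,q$ are marked singletons of $K_{FF}$. Hence $\sigma(\mathbf P)\subseteq K_{IJ}$, while $\sigma(x^2)=\{pp,pq,qp,qq\}$ contains $pq\notin K_{IJ}$. Your second alternative is not a different option: an element substituted for $x$ yields $pq$ in $\sigma(x^2)$ only if its language contains both $p$ and $q$.

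There is also a false step in the case $|\mathbf w|\geq3$. Among the pairs $FF,FA,AF,AA$ only $FF$ lies in $\mathcal D$, so an assignment using the letter $a$ need not keep the premise inside $K$. For example, take $\mathbf P=xy$ with $x,y\mapsto a$. The image $aa$ has the all-false signature, so it is the top, and nothing is refuted. Send every variable to $f$, as the paper does, so that the premise consists of $f$ and $ff$; your typing assignment would also work. Finally, variables of $\mathbf w$ that do not occur in $\mathbf P$ must also be assigned. Your definitions of $m$ and $a$ make them isolated vertices of type $C$, which is how the paper treats them, and the case analysis is unaffected.
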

\begin{proof}
We first check the defining identities in every quotient. For an
absorption form of \eqref{f65:mark} or \eqref{f65:reverse}, consider
any polynomial substitution and any word contexts. If the premise
is not contained in $K_{IJ}$, adding the conclusion cannot change
that failed subset test. If it is contained in $K_{IJ}$, the
length argument in the proof of Lemma~\ref{f65:closure} reduces
the inference to the two word rules, under which $K_{IJ}$ is closed.
Thus both sides have the same signature. Every threefold product
has the all-zero signature, represented by any word of length
three. This element is additively absorbing and a multiplicative
zero, so \eqref{f65:top} holds as well.

For the reverse inclusion, it suffices to separate every absorption
$\mathbf w\preceq\mathbf P$ that fails in $\mathcal W$. Its premise
contains no word of length at least three. Replace its support by
its closure $(U,E)$ from Lemma~\ref{f65:closure}; this uses only the
finitely many vertices occurring in the premise. Include any
additional variables of $\mathbf w$ as isolated unmarked vertices,
of type $C$. Assign to each vertex its type as above.

If $|\mathbf w|\geqslant3$, send every variable to $f$ in any one
of the quotients. The premise consists of $f$ and $ff$, both in
$K_{IJ}$, whereas the target word is not in $K_{IJ}$. The empty-context
test separates the two sides. If $\mathbf w=x$ is a singleton,
then $x\notin U$. Send each vertex to the generic letter
$f,a,b,c$ for its type. All premise words belong to $K_{IJ}$,
whereas the image of $x$ has type $B$ or $C$ and is not a singleton
in $K_{IJ}$.

Suppose next that $\mathbf w=xy$ with $x\neq y$. If its type
pair is not in $\mathcal D$, the same generic assignment separates
it. Otherwise, use the quotient $Q_{IJ}$ for its type pair,
send $x$ to $p$ and $y$ to $q$, and send all other vertices to
their generic letters. No premise word maps to $pq$, since the
images $p,q$ are unique to $x,y$ and $xy\notin E$. If $p$ is
marked, then $yx\notin E$ as well: otherwise
\eqref{f65:rule-reverse} would add $xy$. Hence no premise word
maps to the additional deleted word $qp$. The premise is contained
in $K_{IJ}$, but the target maps to $pq\notin K_{IJ}$.

Finally, suppose that $\mathbf w=x^2$. If $x$ has type $A$ or
$B$, its type pair is forbidden and the generic assignment suffices.
If it has type $F$ or $C$, use $Q_{FF}$ or $Q_{CC}$, respectively,
and substitute $p+q$ for $x$, sending all other vertices to their
generic letters. Since $x^2\notin E$, every premise quadratic
contains $x$ at most once. Its expansion therefore contains neither
$pq$ nor $qp$, and all its expanded words have allowed type pairs.
Every singleton premise still has marked type. Thus the premise
remains inside the language, whereas $(p+q)^2$ contains the excluded
word $pq$. This proves separation in every case.
\end{proof}

Take disjoint copies of the six $Q_{IJ}$ and identify their
all-zero-signature elements $\Omega$. Retain the operations within
each component, and set both the sum and product of elements from
different components equal to $\Omega$. Denote the resulting
algebra by $A$. It embeds into the direct product of the six
quotients by putting an element in its own coordinate and $\Omega$
in the other coordinates. It also contains each quotient as a
subalgebra. Therefore Lemma~\ref{f65:generate} gives
\begin{equation}\label{f65:source}
 |A|=11+12+12+10+12+13-5=65,
 \qquad \V(A)=\mathcal W.
\end{equation}
In particular, $A$ is an ai-semiring with the finite identity basis
$\AI$ and \eqref{f65:top}--\eqref{f65:reverse}.

\subsubsection{The null factor}
\label{n:forward-section}

\begin{thm}\label{nf:forward-counterexample}
The $65$-element ai-semiring $A$ in \eqref{f65:source} is finitely
based, whereas $A\times N_2$ is nonfinitely based. In fact, the
identities of $A\times N_2$ have no basis with a bounded number
of variables.
\end{thm}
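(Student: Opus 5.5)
The first assertion is already established: by \eqref{f65:source}, $A$ is finitely based, with basis $\AI$ together with \eqref{f65:top}--\eqref{f65:reverse}. It remains to show that $\Id(A\times N_2)=\Id(A)\cap\Id(N_2)$ has no basis in boundedly many variables.

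I would first describe $\Id(N_2)$ concretely. On $N_2$ every product is $0$, so a polynomial evaluates to the join of the values of its singleton summands. Hence an identity holds in $N_2$ exactly when both sides have the same set of singleton variables. This set may be empty on both sides, in which case both sides are constantly $0$. Consequently $\Id(A\times N_2)$ consists of the identities of $\mathcal W$ that preserve the singleton support $U$. The point is that rule \eqref{f65:rule-mark} enlarges $U$. So in $A\times N_2$ a mark can no longer be created as a free-standing summand; it can only be used through its effect on quadratic words.

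Next I would define the path identities. For $n\geq1$ put
\[
 \mathbf P_n=x_0+\sum_{i=1}^{n}(t_ix_{i-1}+t_ix_i)+yx_n .
\]
In $\mathcal W$, rule \eqref{f65:rule-mark} propagates the mark from $x_0$ along the path to $x_n$, and rule \eqref{f65:rule-reverse} then adds $x_ny$. Thus $x_ny\preceq\mathbf P_n$ holds in $A$. It also holds in $N_2$, since the singleton supports agree. So $\pi_n\colon x_ny\preceq\mathbf P_n$ is an identity of $A\times N_2$ in $2n+2$ variables.

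To show that no $k$-variable basis exists, fix $k$, take $n$ much larger than $k$, and use Lemma~\ref{lr:closure} with $\Sigma$ equal to the set of all $k$-variable identities of $A\times N_2$. I would exhibit a set $C$ of words that contains $\operatorname{supp}(\mathbf P_n)$, omits $x_ny$, and is closed under every contextual polynomial instance of every such identity. My candidate for $C$ is $\operatorname{supp}(\mathbf P_n)$ together with all words of length at least three, together with a controlled set of quadratic words among the path letters. The quadratic words added would be exactly the reversals $x_i t_{i+1}$ and similar edges that are consistent with the type assignment from \eqref{f65:types-equations}, in which only $x_0$ is marked; $x_ny$ itself is excluded. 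With this choice, no new singleton is ever added.

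The closure check has two parts. First, any instance whose premise lies in $C$ and which contains a word of length at least three has its conclusion inside $C$ automatically, except for singletons, and singletons cannot be added because the identity preserves singleton content. Second, for instances built only from short words, I would use the $k$-variable bound together with the constraint that the singleton support is preserved. The premise image then meets the path in boundedly many places. Since $n\gg k$, there is a segment of the path lying outside that bounded region. I would exhibit an assignment into $A\times N_2$, with types taken from \eqref{f65:allowed} and a ``cut'' in the middle of the path, under which that identity holds while the local configuration mimics an unmarked $x_n$. This would show that the identity cannot add $x_ny$ or any word outside $C$.

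The main obstacle is this second part: arbitrary polynomial substitutions can map several variables onto the same path letters. The cut argument must handle every shape of substituted premise uniformly, presumably by a case analysis on which path edges occur as quadratic words in the image, as in the proof of Lemma~\ref{f65:closure}.
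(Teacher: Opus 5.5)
There is a genuine gap, even though your overall strategy is sound: a path identity, then a closed set via Lemma~\ref{lr:closure}. Your $\mathbf P_n$ with one orientation of each path edge does give an identity of $A\times N_2$.

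\textbf{The set $C$ is not closed.} In $\mathcal W$ every threefold product is the top element $\Omega$, and in $N_2$ every word of length at least two has value $0$. So, for example, $uv\preceq u^3$ is a two-variable identity of $A\times N_2$. Its instance $u\mapsto x_n$, $v\mapsto y$ has premise $x_n^3\in C$ and conclusion $x_ny$. Your claim that an instance with a long premise word automatically lands in $C$ is therefore false: a cubic premise produces every quadratic word.

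The closed set must contain \emph{no} word of length at least three. This costs nothing: by Lemma~\ref{f65:closure}, the exact $\mathcal W$-closure of $\operatorname{supp}(\mathbf P_n)$ contains no such word, and an instance whose premise contains a long word then never fires. On the other hand, $C$ must contain the reversed path edges. For instance, $x_1t_1$ already follows from $x_0+t_1x_0+t_1x_1$ by the three-variable identity $uv\preceq w+vw+vu$ of $A\times N_2$. The right candidate is $C=\operatorname{supp}(\mathbf P_n)\cup\{x_{i-1}t_i,\,x_it_i:1\leq i\leq n\}$.

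\textbf{The closure check for short instances is not carried out.} This check is the entire content of the theorem. The ``cut'' assignment does not make sense as stated, because an identity of $A\times N_2$ holds under every assignment. The missing idea is a reduction to a word instance.
\begin{enumerate}
\item Any instance with premise in $C$ is valid in $\mathcal W$. So its conclusion lies in the $\mathcal W$-closure of $C$, which is $C\cup\{x_1,\dots,x_n,x_ny\}$.
\item Validity in $N_2$ forbids new singletons, so only $x_ny$ could be new.
\item Consider a step producing $x_ny$. The source target $\mathbf u$ is not a variable: validity in $N_2$ would make it a singleton summand of the premise $\mathbf Q$, so the step would add nothing new. Hence the contexts are empty and $\mathbf u=uv$ with $u\neq v$. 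A square is excluded because it would also produce the loop $x_n^2$, which lies outside the $\mathcal W$-closure.
\item Choose one word from the image of every source variable, taking $x_n$ for $u$ and $y$ for $v$. This gives $\mathbf D\subseteq C$ with $\mathcal W\models x_ny\preceq\mathbf D$. Moreover $\mathbf D$ has at most $2k$ letters, since each selected image is a factor of a word of length at most two. This also disposes of your worry about polynomial substitutions.
\item By Lemma~\ref{f65:closure}, $x_n$ must become marked. Marks spread only within undirected components, and the only available mark is $x_0$. So $\mathbf D$ must contain all $2n+1$ vertices $x_0,t_1,x_1,\dots,t_n,x_n$, which is impossible for $n>k$.
\end{enumerate}
This is the paper's argument. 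The paper puts both orientations of each path edge into $\mathbf P_n$, so the target is the only non-singleton word the $\mathcal W$-closure adds, and hence the first new word of any derivation.
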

\begin{proof}
For $n\geqslant1$, use distinct variables
$z_0,\ldots,z_n,t_1,\ldots,t_n,r$, and put
\begin{equation}\label{f65:null-path}
 \mathbf P_n=z_0+\sum_{i=1}^{n}
 \bigl(z_{i-1}t_i+t_i z_{i-1}+t_i z_i+z_i t_i\bigr)+rz_n.
\end{equation}
Its quadratic graph consists of both orientations of the path
$z_0,t_1,z_1,\ldots,t_n,z_n$, together with the edge $r\longrightarrow z_n$.
Rule \eqref{f65:rule-mark} successively marks $z_1,\ldots,z_n$,
and \eqref{f65:rule-reverse} then adds $z_nr$. The exact closure
is consequently
\begin{equation}\label{f65:null-closure}
 \operatorname{supp}(\mathbf P_n)\cup
 \{z_1,\ldots,z_n,z_nr\}.
\end{equation}
Indeed, this set is closed: every edge joins a vertex in
$\{z_0,\ldots,z_n\}$ to one in $\{t_1,\ldots,t_n,r\}$, all
vertices in the first part are marked, and all edges are now
bidirected. Hence
\begin{equation}\label{f65:null-identity}
 z_nr\preceq\mathbf P_n
\end{equation}
holds in $\mathcal W$. It also holds in $N_2$, where every word of
length at least two has value $0$. Thus it holds in $A\times N_2$.

Suppose that these identities have a basis using at most $k$
variables in each identity. Replace its equations by single-word
absorptions in both directions; this does not increase the variable
bound. Choose $n>k$. By Lemma~\ref{lr:closure}, there is an
increasing derivation of \eqref{f65:null-identity} from
$\operatorname{supp}(\mathbf P_n)$. Every added word belongs to
\eqref{f65:null-closure}, because each inference is valid in
$\mathcal W$. An identity of $N_2$ preserves exactly the set of
singleton words: products evaluate to $0$, and distinct singleton
supports are separated by assigning one variable the value $1$
and all others $0$. Consequently no new singleton can be added
in this derivation. Its first new word must therefore be $z_nr$.

Write the basis absorption used at that step as
$\mathbf u\preceq\mathbf Q$, with substitution $\sigma$ and
word contexts $\boldsymbol\ell,\mathbf s$. The source target
$\mathbf u$ cannot be a variable. If it were a variable $v$,
validity in $N_2$ would require the singleton $v$ to occur in
$\mathbf Q$, so all words of
$\boldsymbol\ell\sigma(v)\mathbf s$ would already be present.
Since the new word has length two, it follows that both contexts
are empty and $|\mathbf u|=2$. Nor can $\mathbf u$ be a square:
if $\sigma(v)^2$ contains $z_nr$, then $\sigma(v)$ contains
both singleton letters $z_n,r$, so its square also contains the
loop $z_n^2$, contrary to \eqref{f65:null-closure}. Thus
$\mathbf u=uv$ for distinct source variables $u,v$.

Choose one word from the image under $\sigma$ of every source
variable, choosing $z_n$ for $u$ and $r$ for $v$. This defines a
word substitution $\theta$. All words in
$\mathbf D=\theta(\mathbf Q)$ belong to
$\operatorname{supp}(\mathbf P_n)$, and validity of the source
absorption in $\mathcal W$ gives
$\mathcal W\models z_nr\preceq\mathbf D$.
By Lemma~\ref{f65:closure}, producing the absent edge $z_nr$
requires that $z_n$ become marked. Neither rule changes the
underlying undirected graph, and a new mark always lies in the
connected component of an existing mark. The only possible
initial mark in $\mathbf D$ is $z_0$. Hence $z_0$ must occur
and its component in the graph of $\mathbf D$ must contain $z_n$.
As this graph is a subgraph of the displayed path with the
additional leaf $r$, it must contain all $2n+1$ vertices
$z_0,t_1,z_1,\ldots,t_n,z_n$.

On the other hand, every source variable occurring in $\mathbf Q$
has a selected image of length at most two: that image is a factor
of a word of $\mathbf D\subseteq\operatorname{supp}(\mathbf P_n)$.
The at most $k$ source variables therefore supply at most $2k$
distinct letters. This contradicts $2n+1>2k$ and proves the theorem.
\end{proof}

Theorem~\ref{nf:forward-counterexample} also gives a finite-algebra
counterexample to the conjecture of Ren and Zeng
\cite[Section~4]{RenZeng2019} that joining a finitely based
ai-semiring variety with $\V(N_2,T_2)$ preserves finite basability.
Indeed, $T_2$ satisfies \eqref{f65:top}--\eqref{f65:reverse}, so
\[
 \mathcal W\vee\V(N_2,T_2)=\mathcal W\vee\V(N_2)
 =\V(A\times N_2).
\]
Here $\mathcal W$ is finitely based and generated by a finite
algebra, whereas the displayed join is nonfinitely based.

\subsubsection{The left-zero and right-zero factors}
\label{lf:forward-section}

\begin{thm}\label{lf:forward-counterexample}
The ai-semirings $A$ and $A^{\mathrm{op}}$ are finitely based and
have $65$ elements. Their products satisfy
\[
 \neg\FB(A\times L_2),\qquad
 \neg\FB(A^{\mathrm{op}}\times R_2).
\]
Neither product has an identity basis with a bounded number of
variables.
\end{thm}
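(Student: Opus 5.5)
Finite basability of $A$ and $A^{\mathrm{op}}$ is already given by \eqref{f65:source}. For $A^{\mathrm{op}}$ we use the fact that reversing all words sends the identities of $A$ to those of $A^{\mathrm{op}}$, and the identities of $L_2$ to those of $R_2$. Hence $\Id(A^{\mathrm{op}}\times R_2)$ is the mirror image of $\Id(A\times L_2)$. A bounded-variable basis for one reverses to a bounded-variable basis for the other. So it suffices to treat $A\times L_2$, and I would mimic the proof of Theorem~\ref{nf:forward-counterexample}.

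\textbf{The path family.} In $L_2$ a term evaluates to the sum of the values of the first letters of its words. So an identity holds in $L_2$ exactly when both sides have the same set $h(\cdot)$ of head letters, and an absorption $\mathbf w\preceq\mathbf P$ holds iff $h(\mathbf w)\in h(\mathbf P)$. The target $z_nr$ of \eqref{f65:null-identity} has head $z_n$, so I add $z_n$ as a head without disturbing the closure. I would take
\[
 \mathbf P_n'=\mathbf P_n+z_nt_n,
\]
which already belongs to $\mathbf P_n$, since $z_nt_n$ is a summand. Thus $\mathbf P_n$ itself has $z_n\in h(\mathbf P_n)$, and the identity $z_nr\preceq\mathbf P_n$ holds in $L_2$ as well as in $\mathcal W$, hence in $A\times L_2$.

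\textbf{The derivation argument.} Suppose a basis uses at most $k$ variables, and take $n>k$. As before, convert the basis to single-word absorptions and apply Lemma~\ref{lr:closure}. Every word added in the derivation lies in the $\mathcal W$-closure \eqref{f65:null-closure}. The difference from the $N_2$ case is that $L_2$ no longer forbids adding singletons: the singletons $z_1,\dots,z_n$ are now legitimately derivable. So the first step that adds $z_nr$ may be preceded by steps adding marks.

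I would therefore focus directly on the step that first adds $z_nr$, via the absorption $\mathbf u\preceq\mathbf Q$ with substitution $\sigma$ and contexts $\boldsymbol\ell,\mathbf s$. If $\mathbf u$ is a variable $v$, validity in $L_2$ gives a word of $\mathbf Q$ with head $v$. I would then argue by length and contexts, exactly as before, that the step can be reduced to the case $\mathbf u=uv$ with empty contexts. The square case is excluded by the loop argument.

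\textbf{Main obstacle.} The case $\mathbf u=v$ with nonempty contexts, where $\sigma(v)$ contains $r$ or $z_n$, is where the $N_2$ argument used singleton preservation; I expect this to be the main obstacle. Here I would use the validity of the source absorption in $\mathcal W$ after a word selection $\theta$. It yields a $\mathcal W$-derivation of the new word from $\theta(\mathbf Q)$ together with contexts, and this set is contained in the current word set, which is itself contained in \eqref{f65:null-closure}.

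The key invariant I would then prove is the following. Any $\mathcal W$-inference that produces $z_nr$, or any mark $z_j$, from a subset $D$ of \eqref{f65:null-closure} requires $D$ to contain a connected marked chain from a marked vertex to $z_n$ whose previously derived marks count. To make this robust, I would induct on the derivation and show a stronger claim: the first time a mark $z_j$ appears, the applied source absorption has at least $j$ distinct letters among the selected images. This uses Lemma~\ref{f65:closure} and the fact that marks propagate only along a path from $z_0$.

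Combining this with the bound of at most $2k$ letters supplied by $k$ source variables gives the contradiction. The delicate point is that earlier derived marks could shortcut the chain. So the invariant must be phrased as follows: marks are added in order $z_1,z_2,\dots$, and adding $z_{j}$ needs premises $t_j z_{j-1}$ and $t_j z_j$. This only requires locality, so the real contradiction must again be sought at $z_nr$. I would fall back on modifying the family so that $z_n$ cannot be marked except through the long path, for instance by using a fresh head letter in place of the singleton $z_0$.
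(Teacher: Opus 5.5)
Your reduction of $A^{\mathrm{op}}\times R_2$ to $A\times L_2$ by reversing words is fine, and so is taking finite basability of $A$ from \eqref{f65:source}. The gap is the choice of family. The identity $z_nr\preceq\mathbf P_n$ cannot witness the absence of a bounded-variable basis for $A\times L_2$, because it follows from two identities of $A\times L_2$ in at most three variables. The first is $u\preceq v+tv+tu+ut$. It holds in $\mathcal W$ by \eqref{f65:mark}, and in $L_2$ because $u$ is a head on the right. The second is $xy\preceq yx+x$, which is \eqref{f65:reverse}; it holds in $L_2$ since $x$ is a head on the right.

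Starting from $\operatorname{supp}(\mathbf P_n)$, apply the first identity with $v\mapsto z_{j-1}$, $t\mapsto t_j$, $u\mapsto z_j$. This adds $z_1,z_2,\dots,z_n$ in turn, since $t_jz_{j-1}$, $t_jz_j$ and $z_jt_j$ are summands of $\mathbf P_n$. The second identity with $x\mapsto z_n$, $y\mapsto r$ then adds $z_nr$. So your intended invariant, that the first appearance of $z_j$ needs about $j$ letters, is false, as you begin to suspect at the end. No analysis of the step adding $z_nr$ can rescue the argument.

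The underlying reason is that every $z_j$ is a head of $\mathbf P_n$, through $z_jt_j$, so the head invariant of $L_2$ blocks nothing. Relatedly, the $N_2$ trick of excluding a variable source target is unavailable: in $L_2$, $v\preceq\mathbf Q$ only needs a word of $\mathbf Q$ headed by $v$, not the singleton $v$. The closing suggestion to modify the family is not yet an argument.

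The missing idea is to design the polynomial so that the head invariant forbids every intermediate mark, and to make the singleton $z_n$ itself the target. The paper uses $\mathbf H_n$ in \eqref{f65:left-path}. Its path edges are oriented out of the $t_i$, so $h(\mathbf H_n)=\{z_0,t_1,\dots,t_n,z_n\}$. Here $z_n$ is a head only through the pendant edge $z_ng$, which is never reversed because $g$ is never marked. A derivation valid in $L_2$ never enlarges the head set, and the $\mathcal W$-closure marks only $z_0,\dots,z_n$. Hence the only singleton that can ever be added is $z_n$.

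At the first step adding $z_n$, the target has length one. So both contexts are empty and the source target is a variable $v$. Select $z_n$ for $v$ and one word from each other image. This gives a word instance $\mathbf D$ of the premise whose only possible singleton is $z_0$, and $\mathcal W\models z_n\preceq\mathbf D$. By Lemma~\ref{f65:closure}, the graph of $\mathbf D$ must connect $z_0$ to $z_n$, so it contains all $2n+1$ path vertices. But $k$ source variables, with selected images of length at most two, supply at most $2k$ letters. The variable-target case you flagged as the main obstacle is exactly the case this argument handles.
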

\begin{proof}
For distinct variables $z_0,\ldots,z_n,t_1,\ldots,t_n,g$, put
\begin{equation}\label{f65:left-path}
 \mathbf H_n=z_0+\sum_{i=1}^{n}(t_i z_{i-1}+t_i z_i)+z_ng.
\end{equation}
Successive applications of \eqref{f65:rule-mark} show that
$\mathcal W\models z_n\preceq\mathbf H_n$. This absorption also
holds in $L_2$, since $z_n$ occurs as the head of $z_ng$.
Thus it holds in $A\times L_2$.

The exact $\mathcal W$-closure of the support of $\mathbf H_n$
marks precisely $z_0,\ldots,z_n$ and adds the reverse edges
$z_{i-1}t_i,z_it_i$ for $1\leqslant i\leqslant n$.
The edge $z_ng$ retains its original orientation. To verify
closedness, note that every edge crosses the bipartition
$\{z_0,\ldots,z_n\}$ and $\{t_1,\ldots,t_n,g\}$, and all
vertices in the first part are marked; the only extra leaf edge
has unmarked target and cannot be reversed by
\eqref{f65:rule-reverse}. In particular, no loop or word of
length at least three is introduced, and the underlying graph
is the path $z_0,t_1,z_1,\ldots,t_n,z_n$ with the additional
leaf $g$ at $z_n$.

Suppose that the product has an identity basis with at most $k$
variables per identity, split into single-word absorptions as
before, and take $n>k$. In an increasing derivation of
$z_n\preceq\mathbf H_n$, all added words lie in the exact
$\mathcal W$-closure just described. Moreover, validity in $L_2$
preserves the set of heads, initially
\[
 h(\mathbf H_n)=\{z_0,t_1,\ldots,t_n,z_n\}.
\]
Thus the only singleton that can be newly added is $z_n$:
the intermediate singletons $z_1,\ldots,z_{n-1}$ are excluded
by their heads, and the other vertices are never marked in the
$\mathcal W$-closure.

Consider the first step adding $z_n$. Because this target has
length one, the multiplicative contexts are empty and the source
target is a variable $v$. Choose the image $z_n$ for $v$ and
one word from each other source-variable image. If the source
absorption is $v\preceq\mathbf Q$, denote the resulting word
instance of its premise by $\mathbf D$. Its only possible
singleton is $z_0$, its edges lie in the graph just described,
and $\mathcal W\models z_n\preceq\mathbf D$. By
Lemma~\ref{f65:closure}, deriving the mark $z_n$ therefore
requires $z_0$ and an undirected connection from $z_0$ to $z_n$
in the graph of $\mathbf D$. The unique path in the ambient
tree contains all $2n+1$ vertices
$z_0,t_1,z_1,\ldots,t_n,z_n$. Every source variable occurring in
$\mathbf Q$ has a selected image of length at most two, since that
image is a factor of a word of $\mathbf D$. Hence at most $2k$ distinct
letters can occur in $\mathbf D$, a contradiction.

This proves the assertion for $A\times L_2$. Reversing every
multiplicative word preserves equational consequence and the
number of variables, and gives
\[
 (A\times L_2)^{\mathrm{op}}\cong A^{\mathrm{op}}\times R_2.
\]
It also sends a finite basis for $A$ to a finite basis for
$A^{\mathrm{op}}$. The remaining assertions follow.
\end{proof}

\subsection{The lattice factor \texorpdfstring{$D_2$}{D2}: both directions}
\label{d:section}

\subsubsection{A finite counterexample to the forward implication}
\label{subsec:d2-forward}

The forward implication already fails for a three-element semiring.
Let $S_{53}=\{1,a,\infty\}$, with addition equal to the maximum in
$1<a<\infty$. Multiplication has identity $1$, absorbing element
$\infty$, and $a^2=\infty$. Thus its operation tables are
\[
\begin{array}{c|ccc}
+&1&a&\infty\\\hline
1&1&a&\infty\\
a&a&a&\infty\\
\infty&\infty&\infty&\infty
\end{array}
\qquad
\begin{array}{c|ccc}
\cdot&1&a&\infty\\\hline
1&1&a&\infty\\
a&a&\infty&\infty\\
\infty&\infty&\infty&\infty
\end{array}.
\]
The symbols $1$ and $\infty$ name elements; they are not constants in
the algebraic signature. This is the semiring denoted by $S_{53}$ in
\cite{ZRCS2020,YRG}.

\begin{thm}\label{d:forward-counterexample}
The semiring $S_{53}$ is finitely based, whereas $S_{53}\times D_2$
is nonfinitely based.
\end{thm}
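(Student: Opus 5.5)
Both claims are already in the literature, so the plan is largely to cite. The intro says this example "follows from \cite{ZRCS2020,YRG}".

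\textbf{Finite basability of $S_{53}$.} This comes from the order-three classification of Zhao et al.\ \cite{ZRCS2020}, where $S_{53}$ is shown to be finitely based. I would recall the explicit finite basis given there. As a self-check, I would verify two facts. First, $S_{53}$ is commutative with $x^2\preceq x^2+y$-type behaviour: the squares and the words containing a repeated letter evaluate to $\infty$ unless every letter is $1$. Second, these absorptions follow from the cited basis.

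\textbf{Nonfinite basability of $S_{53}\times D_2$.} By \eqref{intro:product-join}, $\Id(S_{53}\times D_2)=\Id(S_{53})\cap\Id(D_2)$. So the target is $\V(S_{53})\vee\V(D_2)$. I expect \cite{YRG} to treat exactly this join, or an equivalent variety such as one generated by $S_{53}$ together with a distributive-lattice factor, and to prove it nonfinitely based. I would quote that result, checking that its signature conventions are constant-free and match ours.

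Should a self-contained argument be wanted, I would imitate the path method of Theorem~\ref{nf:forward-counterexample}.

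\begin{enumerate}[label=\textup{(\arabic*)}]
\item Describe the absorption closure in $\V(S_{53})$. For commutative words, $\mathbf w\preceq\mathbf P$ holds in $S_{53}$ iff, under every $\{1,a,\infty\}$-assignment, $\mathbf w$ does not exceed $\mathbf P$. Concretely this means: for each set $Z$ of variables sent to $1$ and each variable sent to $a$, the word $\mathbf w$ must be no larger than some word of $\mathbf P$ in an appropriate linear/content sense.
\item Recall that $D_2$ identities are those where each word of one side contains, as a set, the content of some word of the other side, and conversely.
\item Build a family $\mathbf w_n\preceq\mathbf P_n$ valid in both factors, typically a cyclic or path-shaped configuration of $n$ quadratic words. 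The design requirement is that any derivation step in few variables, valid in both factors, cannot produce $\mathbf w_n$ without seeing all $n$ links.
\item Conclude, exactly as in the proof of Theorem~\ref{nf:forward-counterexample}, that there is no basis with a bounded number of variables.
\end{enumerate}

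\textbf{Main obstacle.} In the self-contained route, the hard part is step (3): choosing the family and proving the locality of the first step that adds $\mathbf w_n$. I would first try $\mathbf P_n=\sum_i x_ix_{i+1}$ plus content guards, with $\mathbf w_n=x_1x_n$. In practice I would rely on the citation \cite{YRG}.
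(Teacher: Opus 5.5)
Your plan follows the paper's proof. The basis of $S_{53}$ is \cite[Proposition~7]{ZRCS2020}. For the nonfinite basis of the join, the paper also cites \cite{YRG}: that paper exhibits a four-element semiring $S_{(4,545)}$ with $\V(S_{(4,545)})=\V(S_{53})\vee\V(D_2)$ (Proposition~3.4) and shows it is nonfinitely based (Theorem~3.10). Hence $S_{53}\times D_2$ has the same identities as $S_{(4,545)}$. Two small errors in your side ``self-check'', which play no role in the cited argument: $x^2\preceq x^2+y$ is a trivial identity, and a word with a repeated letter can avoid $\infty$ without every letter being $1$ (e.g.\ $x^2y$ at $x=1$, $y=a$ gives $a$).
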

\begin{proof}
By \cite[Proposition~7]{ZRCS2020}, the following four identities,
together with the ai-semiring axioms, form a basis for $S_{53}$:
\begin{align}
 xy&\approx yx,\label{d:s53-comm}\\
 xy+y^2&\approx x+y^2,\label{d:s53-square}\\
 x+xy&\approx xy,\label{d:s53-absorb}\\
 xy+yz+xz&\approx xyz.\label{d:s53-triple}
\end{align}
Consequently $S_{53}$ is finitely based.

Yue, Ren and Gao prove in \cite[Proposition~3.4]{YRG} that
\[
 \V(S_{(4,545)})=\V(S_{53})\vee\V(D_2).
\]
They also prove that $S_{(4,545)}$ is nonfinitely based
\cite[Theorem~3.10]{YRG}. Since the identities of a direct product
are exactly the identities common to its factors,
\[
 \V(S_{53}\times D_2)
 =\V(S_{53})\vee\V(D_2)
 =\V(S_{(4,545)}).
\]
The algebras $S_{53}\times D_2$ and $S_{(4,545)}$ therefore have
identical equational theories. The second is nonfinitely based, so the
first is nonfinitely based as well.
\end{proof}

In particular, this failure occurs for a finite $S$ whose additive
reduct is a chain. A chain assumption on $S$ alone cannot make the
$D_2$ equivalence valid.

\subsubsection{A ten-element counterexample to the reverse implication}

The reverse implication fails for the same ten-element semiring that
will be used for the null factor. We use its explicit construction and
identity theory from Subsection~\ref{n:section}.

\begin{thm}\label{d:reversecounter}
Let $C_{10}=Q\oplus_0 B$ be the ten-element commutative ai-semiring
in \eqref{n:S-definition}. Then
\[
 \neg\FB(C_{10}),\qquad \FB(C_{10}\times D_2).
\]
A finite identity basis for $C_{10}\times D_2$, relative to $\AI$,
is given by
\begin{align}
 xy&\approx yx,\label{d:finite-comm}\\
 x+x^2&\approx x^2,\label{d:finite-square}\\
 abc+x^2&\approx abc+x,\label{d:finite-contract}\\
 abc+tx^2&\approx abc+tx,\label{d:finite-contract-context}\\
 abc+x+xy&\approx abc+x.\label{d:finite-absorb}
\end{align}
Every product of three elements of $C_{10}$ is its additive greatest
element and multiplicative zero.
\end{thm}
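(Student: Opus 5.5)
We have not yet seen the construction of $C_{10}=Q\oplus_0 B$ from Subsection~\ref{n:section}, so the plan relies on the facts the paper promises there. We assume $\V(C_{10})$ has an explicit description of its absorptions, that $\neg\FB(C_{10})$ is proved there, and that every threefold product is the top element $\Omega$, which is also a multiplicative zero. The assertion $\neg\FB(C_{10})$ is then imported from Subsection~\ref{n:section}. The substance is the finite basis for $C_{10}\times D_2$, whose identities are by \eqref{intro:product-join} exactly the identities common to $C_{10}$ and $D_2$.

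\textbf{Soundness.} We check \eqref{d:finite-comm}--\eqref{d:finite-absorb} in each factor. In $D_2$ (a distributive lattice with $\cdot=\wedge$), $abc+x^2=abc+x$ holds because $x^2=x$. The identity $abc+tx^2\approx abc+tx$ holds likewise. The identity $abc+x+xy\approx abc+x$ holds by absorption $x+xy=x$. Also $x+x^2=x^2$ holds since $x^2=x$. In $C_{10}$, any sum containing a threefold product equals $\Omega$, so the last three identities hold trivially. Commutativity holds by construction, and $x\le x^2$ must be read off from the multiplication table of $C_{10}$.

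\textbf{Completeness.} Let $\mathbf P\approx\mathbf Q$ hold in both factors. Modulo \eqref{d:finite-comm}, words are exponent vectors, and we split on whether $\mathbf P$ contains a word of length at least three. Using \eqref{d:finite-contract} and \eqref{d:finite-contract-context}, together with $abc$ instantiated by a long word, we reduce exponents above one. Using \eqref{d:finite-absorb}, we then delete every word whose content contains the content of a smaller word. This yields the $D_2$-normal form: the antichain of minimal contents, with a long word kept as a witness. Identities of $D_2$ are exactly those with equal sets of minimal contents. The key point is that an identity of $D_2$ forces both sides to be long or both short. Suppose $\mathbf P$ contains a long word and $\mathbf Q$ does not. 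Then the $C_{10}$-value comparison, assigning non-$\Omega$ values, separates the two sides, because every word of $\mathbf Q$ can be kept below $\Omega$. This step needs the structure of $C_{10}$: some element $e$ must exist with $e^2\ne\Omega$, or else we argue via the ideal $Q$. When both sides are long, they reduce to the same normal form, so the identity is derived. When both sides are short (all words of length at most two), the absorption closure in $\V(C_{10})\cap\V(D_2)$ must be generated by commutativity, $x\preceq x^2$, and nothing else. Here we use Lemma~\ref{lr:closure} together with the separating assignments of Subsection~\ref{n:section}, and combine these with $D_2$, which forbids adding any word whose content is not above an existing content.

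\textbf{Main obstacle.} The short case is the obstacle: we must show that every short absorption valid in both factors is a consequence of \eqref{d:finite-comm} and \eqref{d:finite-square}. The approach is to take a short absorption $\mathbf w\preceq\mathbf P$ not derivable from these two identities, and to split on $\mathbf w$. If $\mathbf w=x$, then either $x$ is absent from $\mathbf P$ or $x$ occurs only in quadratics, and $D_2$ or $C_{10}$ refutes it. If $\mathbf w=x^2$ and only $x$ is absent, the square rule derives it. If $\mathbf w=xy$ is new, then $D_2$ forces a singleton or square $x$ or $y$ to be present, and the mechanism by which $C_{10}$ fails to be finitely based must separate it. The hope is that the $D_2$ absorption constraint kills exactly the unbounded path phenomena responsible for $\neg\FB(C_{10})$.
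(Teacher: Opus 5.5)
Your architecture matches the paper's: soundness factor by factor, a guarded $D_2$ normal form when both sides contain a cubic word, $C_{10}$ separating long sides from short ones, and reduction of the short case to \eqref{d:finite-comm} and \eqref{d:finite-square}. The gap is the short case. You flag it as the main obstacle but leave it as a hope, and the mechanism you invoke is the wrong one. The nonfinite basability of $C_{10}$ comes from the odd-cycle laws $\varepsilon_n$, which only add \emph{singletons}, so they play no role in separating a new quadratic word.

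The missing fact is the normal form of $\mathcal U=\V(C_{10})$ (Lemmas~\ref{n:finite-normal-forms} and \ref{n:finite-generator}): two polynomials without cubic words agree in $\mathcal U$ only if their quadratic supports coincide, $E=E'$. A disputed edge $xy$ is detected in $Q$ by $x=p$, $y=q$ and all other variables $c$; a disputed loop $x^2$ by $x=d$ and all others $c$. Then the $D_2$ assignment sending one variable to $1$ and the rest to $0$ gives $A\cup L(E)=A'\cup L(E)$, where $L(E)=\{x:x^2\in E\}$. So every singleton on which the two sides differ has its square on both sides, and \eqref{d:finite-square} inserts or deletes it. This is a direct comparison of normal forms; no derivation-level analysis via Lemma~\ref{lr:closure} is needed. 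The question whether $D_2$ ``kills the path phenomena'' also disappears: $D_2$ refutes $x\preceq xy+\mathbf Q_n$ outright.

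Your case analysis also contains concrete errors.
\begin{enumerate}
\item For $\mathbf w=x^2$ you say the square rule derives it from $x$. But \eqref{d:finite-square} is $x\preceq x^2$, not $x^2\preceq x$. In fact $x^2\preceq x$ fails in $Q$: at $x=c$ one gets $c+c^2=w\neq c$. More generally, $x^2\preceq\mathbf P$ with $x^2\notin\operatorname{supp}(\mathbf P)$ is refuted in $C_{10}$, because the quadratic support is rigid.
\item It is not $D_2$ that forces both sides to be long or both short; $x\approx x+xyz$ holds in $D_2$. This comes from $C_{10}$, by assigning $c\in Q$ to every variable, as your next sentence in fact uses.
\item In the long case the two sides have different witnesses $\mathbf r\in\mathbf f$ and $\mathbf s\in\mathbf g$. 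So ``they reduce to the same normal form'' needs the extra step of proving $\mathbf f\approx\mathbf f+\mathbf g$ and $\mathbf g\approx\mathbf f+\mathbf g$, each guarded by its own cubic word, as the paper does.
\end{enumerate}
Finally, for soundness, $x\preceq x^2$ holds in $C_{10}$ because it follows from $\varepsilon_3$ with all variables identified.
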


\begin{proof}
By Lemmas~\ref{n:finite-variety-nfb} and \ref{n:finite-generator},
$C_{10}$ generates the nonfinitely based variety $\mathcal U$
defined in Subsection~\ref{n:section}. In particular, $C_{10}$ is
nonfinitely based. We prove that the displayed identities form a
basis for its product with $D_2$.

Let $\Gamma$ consist of $\AI$ and
\eqref{d:finite-comm}--\eqref{d:finite-absorb}.
All its identities hold in $D_2$. They also hold in $\mathcal U$:
commutativity is a defining identity, and
\eqref{d:finite-square} follows from $\varepsilon_3$ by identifying
all its variables with $x$. Each of the remaining identities has the
cubic summand $abc$ on both sides, so both sides are the additive
greatest element in every member of $\mathcal U$. Thus $\Gamma$
is valid in $C_{10}\times D_2$.

We first record a consequence of the last three identities. Fix a word
$\mathbf r$ of length at least three. For a polynomial $\mathbf f$,
let $\operatorname{nf}_D(\mathbf f)$ be its squarefree polynomial
whose summands have the inclusion-minimal variable sets among the
summands of $\mathbf f$. This is the usual disjunctive normal form
for $D_2$: two polynomials agree in $D_2$ if and only if these
antichains of variable sets coincide. Indeed, a squarefree polynomial
takes value $1$ precisely when the set of variables assigned $1$
contains the variable set of one of its summands. Its
inclusion-minimal such sets are exactly that antichain.

The identities in $\Gamma$ prove
\begin{equation}\label{d:finite-guarded-normal-form}
 \mathbf r+\mathbf f
 \approx \mathbf r+\operatorname{nf}_D(\mathbf f).
\end{equation}
To see this, split $\mathbf r$ into three nonempty factors and
substitute them for $a,b,c$ in
\eqref{d:finite-contract}--\eqref{d:finite-absorb}. Retain
$\mathbf r$ as an additional summand throughout the following
reductions. By commutativity, a repeated letter can be written as
$x^2$ with its remaining factors collected together. If there is no
remaining factor, apply \eqref{d:finite-contract}; otherwise, apply
\eqref{d:finite-contract-context}. Repeated application makes every
summand squarefree. If squarefree summands $\mathbf u,\mathbf v$
have strictly nested variable sets, with that of $\mathbf u$
contained in that of $\mathbf v$, write
$\mathbf v=\mathbf u\mathbf h$, where $\mathbf h$ is nonempty.
Identity~\eqref{d:finite-absorb}, with $x=\mathbf u$ and
$y=\mathbf h$, deletes $\mathbf v$. Additive idempotence deletes
duplicates. These reductions prove
\eqref{d:finite-guarded-normal-form}. Consequently,
\begin{equation}\label{d:finite-guarded-lifting}
 D_2\models\mathbf f\approx\mathbf g
 \quad\Longrightarrow\quad
 \Gamma\vdash\mathbf r+\mathbf f\approx\mathbf r+\mathbf g.
\end{equation}

Now let $\mathbf f\approx\mathbf g$ be any identity of
$C_{10}\times D_2$. It holds in both $\mathcal U$ and $D_2$.
By Lemma~\ref{n:finite-normal-forms}, either both polynomials
contain a word of length at least three or neither does.

Suppose first that both contain such a word. Choose one,
$\mathbf r$, from $\mathbf f$ and one, $\mathbf s$, from
$\mathbf g$. Since the two polynomials agree in $D_2$, so do
$\mathbf f$ and $\mathbf f+\mathbf g$, and likewise
$\mathbf g$ and $\mathbf f+\mathbf g$. Applying
\eqref{d:finite-guarded-lifting} with $\mathbf r$ and then with
$\mathbf s$, and using their occurrence among the corresponding
summands, gives
\[
 \Gamma\vdash\mathbf f\approx\mathbf f+\mathbf g,
 \qquad
 \Gamma\vdash\mathbf g\approx\mathbf f+\mathbf g.
\]
Hence $\Gamma\vdash\mathbf f\approx\mathbf g$.

Suppose instead that all summands have length at most two. Write
$(A,E)$ and $(A',E')$ for their singleton and commutative quadratic
supports. By Lemma~\ref{n:finite-normal-forms}, equality in
$\mathcal U$ implies $E=E'$. Let
\[
 L(E)=\{x:x^2\in E\}.
\]
Assigning $x=1$ and every other variable $0$ in $D_2$ shows that
$\mathbf f$ takes value $1$ exactly when $x\in A\cup L(E)$;
the corresponding condition for $\mathbf g$ is
$x\in A'\cup L(E)$. Therefore
\[
 A\cup L(E)=A'\cup L(E).
\]
Every singleton on which $A$ and $A'$ differ is thus accompanied
by its square on both sides. Identity~\eqref{d:finite-square}
inserts or removes that singleton without changing the square.
Applying it to all such differences, together with the commutative
ai-semiring laws, proves $\mathbf f\approx\mathbf g$.
This proves completeness of $\Gamma$ and finishes the proof.
\end{proof}

\subsection{The left-zero and right-zero factors: the reverse implication}
\label{lr:section}

We construct a sixteen-element nonfinitely based ai-semiring whose
product with $L_2$ is finitely based. Its multiplicative opposite
provides the corresponding witness for $R_2$. Recall that
$h(\mathbf{w})$ denotes the first letter of a word and
$h(\mathbf{P})$ the set of first letters of a polynomial.
Since multiplication in
$L_2$ is left-zero multiplication,
\begin{equation}
 L_2\models \mathbf{w}\preceq \mathbf{P}\quad\Longleftrightarrow\quad h(\mathbf{w})\in h(\mathbf{P}).
 \label{lr:leftcriterion}
\end{equation}
Indeed, a word evaluates in $L_2$ to the value of its first letter.

\begin{thm}\label{lr:counterexample}
There exists a sixteen-element ai-semiring $S$ such
that all products of three elements have the same value, which is both
an additive greatest element and a multiplicative zero, and
\[
 \neg\FB(S),\qquad \FB(S\times L_2).
\]
Its multiplicative opposite $S^{\mathrm{op}}$ satisfies
\[
 \neg\FB(S^{\mathrm{op}}),\qquad
 \FB(S^{\mathrm{op}}\times R_2).
\]
Consequently, neither of the implications
$\FB(S\times L_2)\Rightarrow\FB(S)$ and
$\FB(S\times R_2)\Rightarrow\FB(S)$ holds for arbitrary ai-semirings.
\end{thm}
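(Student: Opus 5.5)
Following the introduction's hint (``directed quadratic supports and odd cycles''), I would mimic the $65$-element construction. I first define a variety $\mathcal V$ by $\AI$, the cubic identity \eqref{f65:top} (so all threefold products equal an absorbing zero $\Omega$), and a family of absorptions on length-$\le 2$ supports $(U,E)$ whose closure rules are governed by directed odd cycles. A concrete choice is the rule: if $x_0x_1,x_1x_2,\ldots,x_{2m}x_0$ is a directed odd cycle in $E$, then add the singleton $x_0$ (or a reversed edge). Such a rule needs one identity for each odd length $2m+1$. The variety $\mathcal V$ is then defined by the infinite family $\mathbf c_m\colon x_0\preceq \sum_{i} x_ix_{i+1}$ (indices mod $2m+1$) together with $\AI$ and \eqref{f65:top}. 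The crucial design point is that the target $x_0$ is a head of the premise, since $x_0x_1$ occurs. Hence each $\mathbf c_m$ holds in $L_2$ by \eqref{lr:leftcriterion}.

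\textbf{Nonfinite basability of $S$.} The first step is an exact closure lemma in the style of Lemma~\ref{f65:closure}, obtained from Lemma~\ref{lr:closure}: the closure of a support without cubic words only adds singletons lying on directed odd cycles. Next, I argue as in Theorem~\ref{nf:forward-counterexample}. Suppose a basis uses at most $k$ variables, and take $2m+1>2k$. In a derivation of $\mathbf c_m$, the first (and only) new word is $x_0$. Selecting word images as before gives a premise $\mathbf D$ whose quadratic graph is a subgraph of the single long cycle, and which must satisfy $\mathcal V\models x_0\preceq\mathbf D$. Any proper subgraph of the cycle contains no directed odd cycle, yet $\mathbf D$ involves at most $2k$ letters. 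This is a contradiction. To realize $\mathcal V$ by a finite algebra, I would use syntactic quotients $\mathcal P_f^+(X^+)/{\equiv_K}$ exactly as in Lemma~\ref{f65:generate}. Here $K$ is a language of singletons and quadratic words encoding a $2$-colouring-type invariant: a bipartite labelling forbids odd cycles, so a separating assignment exists whenever no odd cycle forces the mark. I would aim for two or three small quotients glued at $\Omega$, reaching sixteen elements, and count the elements by a signature table as in Table~\ref{f65:quotient-table}.

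\textbf{Finite basis for $S\times L_2$.} Identities of the product are exactly the identities of $\mathcal V$ that preserve heads. I would propose the finite basis consisting of $\AI$, \eqref{f65:top}, and a ``head-guarded'' rule. For example, take
\[
 x\preceq xy+yx+x z
\]
or a similar short identity, which lets a singleton be added whenever the head condition and a cycle-free local witness are present. The goal is that in the presence of $L_2$-validity, where the target $x$ is already a head $xy$ of the premise, the long odd cycles collapse to bounded local configurations. The proof would show that every absorption $\mathbf w\preceq\mathbf P$ valid in both factors is derivable. For cubic words I use $\Omega$ and the head condition. For singletons, the fact that $x$ is a head plus membership in the $\mathcal V$-closure should reduce, by induction on cycle length, to applications of the short rule.

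\textbf{Main obstacle.} The hard step is this completeness argument. One must choose $\mathcal V$'s rules so that head preservation genuinely trivializes the odd-cycle rule: $L_2$ must kill exactly the nonfinite part. Simultaneously, the rule must remain strong enough to be nonfinitely based alone, and must fit a sixteen-element model. I would first try the rule ``a head $x$ lying on a directed odd cycle becomes marked'' and check whether, in the product, this reduces to a three-variable identity. The $R_2$ statement then follows by passing to opposites, as at the end of Theorem~\ref{lf:forward-counterexample}.
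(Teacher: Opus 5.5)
\textbf{There is a genuine gap, and it sits in the design point you call crucial.} Because $x_0$ is the head of the premise word $x_0x_1$, every $\mathbf c_m$ holds in $L_2$ as well as in $\mathcal V$. So the entire infinite family lies in $\Id(\mathcal V)\cap\Id(L_2)=\Id(S\times L_2)$.

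Now reread your own bounded-variable argument for $\neg\FB(\mathcal V)$. It uses only that each basis identity is valid in $\mathcal V$:
\begin{itemize}
\item The $\mathcal V$-closure of the directed $(2m+1)$-cycle adds only singletons, so the first new word in a derivation of $\mathbf c_m$ is some singleton $x_j$.
\item The selected premise $\mathbf D$ is then a set of cycle edges with $\mathcal V\models x_j\preceq\mathbf D$.
\item Any proper subset of the cycle edges is a union of directed paths with no closed walk at all, so no rule fires. Hence $\mathbf D$ must use all $2m+1$ letters, which exceeds the bound $2k$.
\end{itemize}
This applies verbatim to any basis of $\mathcal V\vee\V(L_2)$, since such a basis is also valid in $\mathcal V$. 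Your $\mathcal V$ therefore yields $\neg\FB(S\times L_2)$, the opposite of what is required.

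Consistently with this, your candidate guard $x\preceq xy+yx+xz$ is not even valid in your $\mathcal V$: its premise graph has only even closed walks. So it cannot belong to a basis for the product. The ``main obstacle'' you flag is not merely hard for this choice but impossible. Head preservation cannot trivialize a family whose members already preserve heads.

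\textbf{The missing idea is that the nonfinitely based family must fail in $L_2$.} Then the join retains only head-guarded strengthenings, and a bounded rule can derive those. The paper takes $\varepsilon_n\colon xy\preceq yx+z_1z_2+\cdots+z_nz_1$, whose target head $x$ is not a head of the premise. It adds \eqref{lr:L1}--\eqref{lr:L3} and $t\preceq abc$.

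This produces a sandwich $\mathcal C\subseteq\mathcal V\subseteq\mathcal W$. Here $\mathcal C$ is the commutative variety with $t\preceq abc$, and $\mathcal W=\mathcal C\vee\V(L_2)$ has the finite basis \eqref{lr:L1}--\eqref{lr:L5} of Lemma~\ref{lr:Wbasis}. Since $L_2\in\mathcal W$, the sandwich gives $\mathcal V\vee\V(L_2)=\mathcal W$ directly, with no separate completeness argument. Intuitively, once the premise of $\varepsilon_n$ contains $x$ or some $xz$ (as $L_2$-validity forces in the absence of cubic words), \eqref{lr:L3} or \eqref{lr:L2} alone yields $xy$, and the long odd cycle becomes irrelevant.

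Your plan also leaves the sixteen-element generator unspecified; the paper uses the syntactic quotients $Q_8$, $Q_6$, $Q_4$ glued at $\Omega$. That is secondary to the design flaw above.
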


We begin with a finitely based variety containing the
nonfinitely based variety used in the construction. Set
\begin{equation}
 \mathcal C=\operatorname{Mod}\bigl(\AI,\ xy\approx yx,\ t\preceq abc\bigr).
 \label{lr:Cdefinition}
\end{equation}
Here and below, variables are distinct unless explicitly
identified. The identity $t\preceq abc$
forces every product of three elements to be the additive greatest
element, which is necessarily unique. By associativity, every product
of four elements can also be written as a product of three elements.
Thus this greatest element is a multiplicative zero. In particular,
all words of length at least three have the same value.

\begin{lem}\label{lr:Ccriterion}
Let $\mathbf{w}$ be a word and $\mathbf{P}$ a nonempty polynomial. Then
$\mathcal C\models \mathbf{w}\preceq \mathbf{P}$ if and only if one of the following conditions
holds:
\begin{enumerate}
\item $\mathbf{P}$ contains a word of length at least three;
\item every word in $\mathbf{P}$ has length at most two, and the commutative
image of $\mathbf{w}$ equals the commutative image of a word in $\mathbf{P}$.
\end{enumerate}
\end{lem}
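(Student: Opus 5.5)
We prove the two directions separately: soundness by identities valid in $\mathcal C$, and completeness by exhibiting a model of $\mathcal C$ that separates any absorption not covered by (1) or (2). Throughout we use that $\mathcal C$ is commutative and that every word of length at least three takes the additive greatest value, which is also a multiplicative zero.

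\textbf{Sufficiency.} Suppose (1) holds, and let $\mathbf r$ be a word of $\mathbf P$ of length at least three. Splitting $\mathbf r$ into three nonempty factors and instantiating $t\mapsto\mathbf w$ in $t\preceq abc$ gives $\mathbf w\preceq\mathbf r$. Hence $\mathbf w\preceq\mathbf P$. Suppose instead (2) holds. Then $\mathbf w$ equals a summand of $\mathbf P$ modulo commutativity, so $\mathbf w\preceq\mathbf P$ follows from $xy\approx yx$ and the ai-semiring axioms.

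\textbf{Necessity.} Assume every word of $\mathbf P$ has length at most two and that the commutative image of $\mathbf w$ differs from that of every word of $\mathbf P$. We must produce a member of $\mathcal C$ and an assignment under which $\mathbf w\not\leq\mathbf P$. We build a syntactic quotient as in \eqref{f65:syntactic}. Let $K$ be the set of all words of length at most two over the variables of $\mathbf P$ and $\mathbf w$ whose commutative image is among those of $\operatorname{supp}(\mathbf P)$. Take $\mathcal P_f^+(X^+)/{\equiv_K}$, which is an ai-semiring by the congruence argument given there.

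We then check the following properties of this quotient.
\begin{enumerate}
\item It is commutative. Since $K$ is closed under commutative rearrangement and contains only words of length at most two, $\ell PQr\subseteq K$ iff $\ell QPr\subseteq K$ (a nonempty context forces length considerations that are symmetric).
\item It satisfies $t\preceq abc$. Any product of three languages has only words of length at least three, so all of its context tests fail. Its class is therefore the all-false signature, which is additively absorbing, since a union test is a conjunction of tests.
\end{enumerate}

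With the identity assignment $x\mapsto\{x\}$, the empty-context test holds for $\mathbf P$ but fails for $\mathbf P+\mathbf w$, because $\mathbf w\notin K$. This holds both when $|\mathbf w|\le 2$ and when $|\mathbf w|\ge3$. Hence $\mathbf w+\mathbf P\not\equiv_K\mathbf P$, and the absorption fails in a member of $\mathcal C$.

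\textbf{Main obstacle.} The delicate step is commutativity of the quotient. The subset tests use arbitrary contexts $\ell,r$, and these are not obviously symmetric. The resolution is the length bound. If $|\ell PQr|$ contains a word of length at least three, then the test fails on both sides. Otherwise the words of $\ell PQr$ and $\ell QPr$ have equal commutative images word-by-word, and $K$ is closed under commutative images.

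An alternative is to read the lemma directly off Lemma~\ref{lr:closure}. The closure under $xy\approx yx$ and $t\preceq abc$ of a support with no word of length at least three only adds commutative rearrangements, since the premise $abc$ can never be matched. This mirrors the argument of Lemma~\ref{f65:closure}. Either route suffices; the closure route is shorter given the paper's earlier lemmas.
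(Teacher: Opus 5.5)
Your proof is correct. Its outline matches the paper's: sufficiency from $xy\approx yx$ and $t\preceq abc$, and necessity via a member of $\mathcal C$ that separates the absorption under $x\mapsto\{x\}$. The difference is the separating model.

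The paper uses one explicit algebra: nonempty finite sets of commutative words of degree at most two, with an adjoined $\Omega$ that absorbs any setwise product containing a word of degree at least three. This is in effect the free $\mathcal C$-algebra on $X$. It evaluates every cubic-free polynomial to exactly its set of commutative words, so separation is immediate for all absorptions at once. The cost is a direct, if brief, check of associativity and distributivity for this ad hoc structure.

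You instead take the syntactic quotient of $\mathcal P_f^+(X^+)$ by $\equiv_K$, with $K$ tailored to $\mathbf P$. The ai-semiring laws then come free from the congruence argument of \eqref{f65:syntactic}, and the model is finite; it is in fact a homomorphic image of the paper's algebra. Your remaining obligations are commutativity and $t\preceq abc$ in the quotient, and you verify both correctly.

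The commutativity check needs less than your length case split. For $p\in P$ and $q\in Q$, the words $\ell pqr$ and $\ell qpr$ always have the same commutative image, and membership in $K$ depends only on the commutative image, since length is determined by it. Hence $\ell PQr\subseteq K$ if and only if $\ell QPr\subseteq K$ directly. Your alternative route via Lemma~\ref{lr:closure} is also sound.
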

\begin{proof}
The defining identities establish sufficiency. For necessity,
let $X$ be a set of variables containing those under consideration,
and let $D$ be the set of nonempty commutative words over $X$ of
degree at most two. Consider the algebra whose elements are the
nonempty finite subsets of $D$, together with an additional element
$\Omega$. Addition of subsets is union. Their product is the
setwise product, unless a product word has degree at least three,
in which case the result is $\Omega$. The element $\Omega$ absorbs
both operations.

Associativity and distributivity follow from the corresponding
properties of union and setwise product and from the fact that
multiplication by a nonempty word cannot lower the degree of a word
of degree at least three. The algebra is
commutative and every threefold product is $\Omega$, so it belongs
to $\mathcal C$. Under the assignment $x\mapsto\{x\}$, a polynomial
without a word of length at least three evaluates to its set of
commutative words, whereas a polynomial containing such a word
evaluates to $\Omega$. Therefore, if the first condition fails and
$\mathbf{w}\preceq \mathbf{P}$ holds, the commutative image of $\mathbf{w}$ must occur in $\mathbf{P}$.
\end{proof}

\begin{lem}\label{lr:Wbasis}
The variety $\mathcal W=\mathcal C\vee\V(L_2)$ is defined by the
ai-semiring axioms and the following five identities:
\begin{align}
 xyz&\approx xzy, \tag{L1}\label{lr:L1}\\
 xy&\preceq yx+xz, \tag{L2}\label{lr:L2}\\
 xy&\preceq yx+x, \tag{L3}\label{lr:L3}\\
 x&\preceq abc+xy, \tag{L4}\label{lr:L4}\\
 xy&\preceq abc+x. \tag{L5}\label{lr:L5}
\end{align}
\end{lem}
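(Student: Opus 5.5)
Since $\Id(\mathcal C\vee\V(L_2))=\Id(\mathcal C)\cap\Id(L_2)$, the proof splits into soundness and completeness, and I would use the two membership criteria already available: \eqref{lr:leftcriterion} for $L_2$ and Lemma~\ref{lr:Ccriterion} for $\mathcal C$. Let $\Gamma$ consist of $\AI$ and \eqref{lr:L1}--\eqref{lr:L5}.

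\textbf{Soundness.} Each of \eqref{lr:L1}--\eqref{lr:L5} is checked in both factors.
\begin{itemize}
\item In $L_2$, every listed absorption has a target whose head occurs among the heads of the premise: $x\in h(yx+xz)$, $x\in h(yx+x)$, $x\in h(abc+xy)$, and $x\in h(abc+x)$. Also \eqref{lr:L1} preserves the head.
\item In $\mathcal C$, \eqref{lr:L1} follows from commutativity. Identities \eqref{lr:L4} and \eqref{lr:L5} hold because $abc$ is the top element.
\item Identities \eqref{lr:L2} and \eqref{lr:L3} hold in $\mathcal C$ by Lemma~\ref{lr:Ccriterion}(2), since $xy=yx$ commutatively and both premises have length at most two.
\end{itemize}

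\textbf{Completeness.} Completeness reduces to proving every single-word absorption $\mathbf w\preceq\mathbf P$ valid in both factors. By the criteria, validity means $h(\mathbf w)\in h(\mathbf P)$ together with condition (1) or (2) of Lemma~\ref{lr:Ccriterion}.

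\textbf{Case (1).} Here $\mathbf P$ contains a word $\mathbf r$ of length at least three, and some word of $\mathbf P$ starts with $x=h(\mathbf w)$. I would first use \eqref{lr:L1} to normalize: by \eqref{lr:L1} every word is determined by its head together with the commutative image of its tail, so \eqref{lr:L1} handles all reorderings after the first letter.
\begin{itemize}
\item If $\mathbf P$ contains the singleton $x$, apply \eqref{lr:L5} with $a,b,c$ instantiated by factors of $\mathbf r$ and $y\mapsto$ the tail of $\mathbf w$. This gives $\mathbf w\preceq \mathbf r+x$, provided $|\mathbf w|\geq 2$; if $\mathbf w=x$, there is nothing to prove.
\item If instead $\mathbf P$ contains a word $x\mathbf u$, first get $x\preceq \mathbf r+x\mathbf u$ from \eqref{lr:L4}, and then proceed as above.
\end{itemize}
So case (1) is straightforward.

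\textbf{Case (2).} Here all words of $\mathbf P$ have length at most two, and $\mathbf w$ equals some $\mathbf v\in\mathbf P$ commutatively. The only nontrivial situation is $\mathbf w=xy$ and $\mathbf v=yx$ with $x\ne y$, together with the requirement $x\in h(\mathbf P)$. So $\mathbf P$ contains either the singleton $x$ or a word $xz$ for some $z$.
\begin{itemize}
\item With the singleton $x$, apply \eqref{lr:L3}.
\item With a word $xz$, apply \eqref{lr:L2}. This covers $z=y$ trivially, $z=x$ as $xx$, and other $z$.
\end{itemize}

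\textbf{Main obstacle.} The main obstacle is justifying case (1) carefully for targets $\mathbf w$ of length at least three. We must derive $\mathbf w\preceq \mathbf r+x$ where the tail of $\mathbf w$ is arbitrary, so \eqref{lr:L5} must be instantiated with $y$ a possibly long word. This is allowed, since substitutions may be arbitrary nonempty terms.

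A second point to check is that the equational closure of absorptions yields the full identities. This holds because, as noted in the preliminaries, every identity is equivalent over $\AI$ to its single-word absorptions.
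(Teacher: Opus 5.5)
Your proposal is correct and follows essentially the paper's argument. You verify the five identities in $\mathcal C$ and $L_2$, then prove completeness by splitting on whether $\mathbf P$ contains a word of length at least three: if so, you use \eqref{lr:L4} and \eqref{lr:L5} with $abc$ instantiated by a factorization of that word; if not, you use \eqref{lr:L3} or \eqref{lr:L2} according to whether the head witness is $x$ or $xz$. The \eqref{lr:L1}-normalization you mention in case (1) is never actually used and can be dropped.
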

\begin{proof}
Each of the five identities holds in both $\mathcal C$ and $L_2$. For
completeness, it suffices to derive every absorption identity
$\mathbf{w}\preceq \mathbf{P}$ valid in both, since every polynomial identity is equivalent
to a finite set of such identities.

Write $x=h(\mathbf{w})$. By \eqref{lr:leftcriterion}, there is a word
$\mathbf{u}$ in $\mathbf{P}$ with $h(\mathbf{u})=x$.

First suppose that $\mathbf{P}$ contains a word $\mathbf{q}$ of length at least three.
Factor $\mathbf{q}$ into three nonempty words. Substitution in
\eqref{lr:L4} and \eqref{lr:L5} gives
\begin{equation}
 \mathbf{A}+\mathbf{q}\approx \mathbf{A}\mathbf{B}+\mathbf{q}
 \label{lr:modq}
\end{equation}
for arbitrary nonempty terms $\mathbf{A},\mathbf{B}$. Since both $\mathbf{w}$ and $\mathbf{u}$ begin
with $x$, this yields
\[
 \mathbf{w}+\mathbf{q}\approx x+\mathbf{q}\approx \mathbf{u}+\mathbf{q}.
\]
The same equality is immediate when either word is the
one-letter word $x$. Hence $\mathbf{w}\preceq \mathbf{u}+\mathbf{q}\preceq \mathbf{P}$.

Now suppose that $\mathbf{P}$ has no word of length at least three.
Lemma~\ref{lr:Ccriterion} provides a word $\mathbf{v}$ in $\mathbf{P}$
with the same commutative image as $\mathbf{w}$; in particular,
$|\mathbf{w}|\leq2$. If $\mathbf{w}=\mathbf{v}$, the desired
absorption is immediate. Otherwise $\mathbf{w}=xy$ and
$\mathbf{v}=yx$ for distinct variables $x,y$. The head witness
$\mathbf{u}$ is either $x$ or $xz$ for some variable $z$.
Identity~\eqref{lr:L3} gives $xy\preceq yx+x$ in the first case,
and \eqref{lr:L2} gives $xy\preceq yx+xz$ in the second.
Thus $\mathbf{w}\preceq\mathbf{v}+\mathbf{u}\preceq\mathbf{P}$
in both cases.
\end{proof}

For each odd integer $n\geq3$, put
\begin{equation}
 \varepsilon_n:\qquad
 xy\preceq yx+z_1z_2+z_2z_3+\cdots+z_nz_1.
 \label{lr:epsilon}
\end{equation}
Define
\begin{equation}
 \mathcal V=\operatorname{Mod}\bigl(
 \AI,\ \text{\eqref{lr:L1}--\eqref{lr:L3}},\ t\preceq abc,\quad
 \varepsilon_n\ (n\geq3\text{ odd})\bigr).
 \label{lr:Vdefinition}
\end{equation}
Each $\varepsilon_n$ holds in every commutative ai-semiring, so
$\mathcal C\subseteq\mathcal V$. Conversely, $t\preceq abc$ implies
\eqref{lr:L4} and \eqref{lr:L5}; thus
$\mathcal V\subseteq\mathcal W$. Lemma~\ref{lr:Wbasis} now gives
\begin{equation}
 \mathcal V\vee\V(L_2)=\mathcal W.
 \label{lr:join}
\end{equation}

\begin{lem}\label{lr:Vnfb}
The variety $\mathcal V$ is not finitely based.
\end{lem}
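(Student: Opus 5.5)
The plan is the standard ``bounded variables'' argument. Fix $k$ and let $\Sigma_k$ be the set of all identities of $\mathcal V$ in at most $k$ variables. It suffices to exhibit, for each $k$, an algebra in $\operatorname{Mod}(\Sigma_k)$ that fails $\varepsilon_n$ for some odd $n>k$; by Lemma~\ref{prelim:free-compactness} this shows $\mathcal V$ has no finite basis. Equivalently, via Lemma~\ref{lr:closure}, I would show that $\Sigma_k$ does not derive $\varepsilon_n$. To do this I construct a set of words containing $\operatorname{supp}(yx+z_1z_2+\cdots+z_nz_1)$, not containing $xy$, and closed under all absorption inferences from $\Sigma_k$.

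\textbf{Closed set.} Take all words of length at least three together with the premise words of $\varepsilon_n$, namely $yx$ and the cycle edges $z_iz_{i+1}$ (indices modulo $n$). Add every reverse edge $z_{i+1}z_i$ as well; this is harmless because \eqref{lr:L3} and \eqref{lr:L2} need a singleton or an out-edge at the head with the right shape. I will adjust this choice as needed so that (L1)--(L3) stay closed.

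The key structural invariant concerns the directed quadratic graph of the premise. Orient each edge and ask whether it can be ``reversed'' under $\mathcal V$-absorption in the absence of length-$\geq 3$ words. A reversal $xy\to yx$ is forced only by \eqref{lr:L2}, \eqref{lr:L3}, or by some $\varepsilon_m$ applied through an odd closed walk. The cycle $z_1\ldots z_n$ of length $n$ is odd.

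\textbf{Checking the $k$-variable identities.} Take a valid $k$-variable absorption $\mathbf u\preceq\mathbf Q$ of $\mathcal V$, a substitution $\sigma$ and contexts. If $\ell\sigma(\mathbf Q)r$ lies in the set while its conclusion contains $xy$, the contexts must be empty and all relevant images must be letters. Choosing one letter from each image gives a word instance whose graph is a homomorphic image of the graph of $\mathbf Q$ in at most $k$ vertices. Using the word model of $\mathcal V$ restricted to commutative-or-not quadratic supports, I would characterize when $\mathcal V\models uv\preceq\mathbf Q$ with $\mathbf Q$ quadratic: either $vu\in\mathbf Q$ together with an odd cycle, or a head witness as in \eqref{lr:L2}--\eqref{lr:L3}, or $uv\in\mathbf Q$.

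Given such a characterization, the instance inside our set would need an odd closed walk among the at most $k$ selected letters. A closed walk in the cycle $C_n$ that uses fewer than $n$ distinct vertices lies in a path, hence is bipartite and has even length. This is the contradiction.

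\textbf{Main obstacle.} The hardest step is the exact characterization of quadratic absorptions $uv\preceq\mathbf Q$ in $\mathcal V$, which amounts to showing that the $\varepsilon_m$ are essentially the only nontrivial reversals. I would first try to realize non-consequences inside a concrete algebra: quadratic supports on a bipartite or acyclic graph, with a quotient in the style of the syntactic constructions of Subsection~\ref{f65:closure}'s setting. Concretely, I would build a semiring of nonempty languages modulo the syntactic congruence of a language $K_n$ consisting of $yx$, the edges of $C_n$ in both orientations, singletons excluded, and all long words. I would then verify directly that every identity of $\mathcal V$ in at most $k$ variables holds there, using the bipartiteness of small subgraphs, while $\varepsilon_n$ fails under the evident assignment.
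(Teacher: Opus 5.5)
\textbf{Genuine gap: the closed set is not closed.} You put every word of length at least three into it, and such a set can never be closed. The identity $t\preceq abc$ is a four-variable identity of $\mathcal V$; indeed it is one of the defining ones. Instantiate $a,b,c$ by letters and $t$ by $xy$. The premise is a length-three word already in your set, and the conclusion is $xy$. In $\mathcal V$ a long word is the greatest element and absorbs everything, so the absorption closure of any set containing one consists of all words.

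The same defect breaks your concrete algebra. If $K_n$ contains all long words, they pass every containment test, so they form the \emph{least} class of $Q(K_n)$. Then $t\preceq abc$ fails there: send $t$ to any letter, which is outside $K_n$. This is also where your step ``the contexts must be empty and all relevant images must be letters'' fails, because a premise instance may now consist of long words.

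The set must exclude all long words and all singletons, i.e.\ $K=\{yx\}\cup\{z_iz_{i+1},z_{i+1}z_i\}$. Then long words fall into the top class $\Omega$, which fails every test. Every premise instance inside $K$ then consists of quadratic words, which does force empty contexts and letter images. This is exactly the paper's $K_m$.

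\textbf{Second gap: the key characterization is missing, and the paper does not need it.} Your route quantifies over all identities of $\mathcal V$ in at most $k$ variables. It therefore needs the exact description of absorptions $uv\preceq\mathbf Q$ over quadratic supports, which you leave unproved as the ``main obstacle''. The paper avoids this. A finite basis is derivable from finitely many of the defining identities in \eqref{lr:Vdefinition}, hence from some $\Delta_N$ (the background identities and $\varepsilon_n$ for odd $n\le N$). So it suffices to check that $K_m$, with $m>N$ odd, is closed under these specific identities. Each check is direct:
\begin{itemize}
\item \eqref{lr:L1} and $t\preceq abc$ have only long premise words;
\item \eqref{lr:L3} cannot fire, by a length count;
\item \eqref{lr:L2} can only reverse cycle edges, since no word of $K_m$ begins with $x$;
\item $\varepsilon_n$ would need an odd closed walk of length $n<m$.
\end{itemize}

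Your stronger bounded-variable conclusion is in fact true once the set is corrected, but only after you prove the characterization (the paper's Lemma~\ref{lr:quadratic-closure}). With it, suppose a valid $ab\preceq\mathbf Q$ with $ba\in\operatorname{supp}(\mathbf Q)$ has an instance inside $K$ producing $xy$, with $x$ in the image of $a$. Then one of three things must hold, each impossible in $K$:
\begin{itemize}
\item $a$ is a singleton of $\mathbf Q$, but $K$ has no singletons;
\item $a$ is the initial vertex of an edge of $\mathbf Q$, but no word of $K$ begins with $x$;
\item the graph of $\mathbf Q$ is nonbipartite. Its odd cycle or loop involves at most $k$ vertices, so it would map to an odd closed walk or loop of length at most $k<n$ in $K$.
\end{itemize}
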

\begin{proof}
Fix $N\geq3$. Let $\Delta_N$ comprise the finite set of background
identities in \eqref{lr:Vdefinition}, together with all
$\varepsilon_n$ for odd $n$ satisfying $3\leq n\leq N$. Choose an
odd integer $m>N$ and distinct variables
\[
 x,y,c_0,c_1,\ldots,c_{m-1},
\]
where the subscripts on the $c_i$ are read modulo $m$. Set
\[
 \mathbf{P}_m=yx+\sum_{i=0}^{m-1}c_ic_{i+1},\qquad
 K_m=\{yx\}\cup
 \{c_ic_{i+1},c_{i+1}c_i:0\leq i<m\}.
\]
The identity $xy\preceq \mathbf{P}_m$ is a renamed instance of
$\varepsilon_m$, and therefore holds in $\mathcal V$. We show that
$K_m$ is closed under all absorption rules arising from $\Delta_N$,
including arbitrary polynomial substitutions and left and right
word contexts. Every word in $K_m$ has length exactly two.

\textbf{Identity \eqref{lr:L1}.}
In either absorption identity obtained from this equality, every
premise word has length at least three under a nonempty
substitution. Thus neither rule can apply within $K_m$.

\textbf{Identity $t\preceq abc$.}
Every premise word has length at least three, so this rule cannot
apply either.

\textbf{Identity \eqref{lr:L3}.}
The premise is $\mathbf{Y}\mathbf{X}+\mathbf{X}$. If all its words, with contexts attached,
had length two, then, for any words $\mathbf{v}$ in $\mathbf{Y}$ and $\mathbf{u}$ in $\mathbf{X}$,
both $\boldsymbol{\ell} \mathbf{u} \mathbf{r}$ and $\boldsymbol{\ell} \mathbf{v}\mathbf{u} \mathbf{r}$ would have length two. Their lengths
differ by the positive integer $|\mathbf{v}|$, which is impossible.

\textbf{Identity \eqref{lr:L2}.}
The premise is $\mathbf{Y}\mathbf{X}+\mathbf{X}\mathbf{Z}$. Every product in the premise has length at least
two. Hence membership of all premise words in $K_m$ forces both
contexts to be empty and every word in $\mathbf{X},\mathbf{Y},\mathbf{Z}$ to be a letter. For any
conclusion word $ab$, the premise supplies
$ba\in K_m$ and, since $\mathbf{Z}$ is nonempty, some $ad\in K_m$. If $ba$
is an edge of the cycle, then $ab$ already belongs to $K_m$.
The only other possibility is $ba=yx$, so that $a=x$ and $b=y$.
But no word in $K_m$ starts with $x$, contradicting the required
membership of $ad$. Thus this rule adds only words already present.

\textbf{An identity $\varepsilon_n$ with $n\leq N$ odd.}
Every variable occurs in a quadratic premise word. The same length
argument forces both contexts to be empty and every polynomial
image to consist entirely of letters. An image may contain several
letters. Choose one letter $d_i$ from the image of each $z_i$. If all premise words
belong to $K_m$, then
\[
 d_1d_2,d_2d_3,\ldots,d_nd_1\in K_m.
\]
These edges form an odd closed walk of length $n$. As a directed
graph, $K_m$ consists of a bidirected cycle of length $m$ and a
disjoint single directed edge $y\to x$. The single edge lies in no
closed walk. The bidirected $m$-cycle has no odd closed walk of
length less than $m$: every odd closed walk in its underlying
undirected graph contains an odd simple cycle of no greater
length, and that graph's only simple cycle has length $m$.
Since $n\leq N<m$, we obtain a contradiction. Hence this rule cannot
apply.

These cases cover all polynomial substitutions, including images
containing several words, and all multiplicative contexts. Thus
$K_m$ is closed. It contains the support of $\mathbf{P}_m$
but not $xy$. By Lemma~\ref{lr:closure},
\begin{equation}
 \Delta_N\not\models xy\preceq \mathbf{P}_m.
 \label{lr:nonconsequence}
\end{equation}

Suppose that $\mathcal V$ had a finite basis $\Gamma$. Every identity
in $\Gamma$ is a consequence of the identities in
\eqref{lr:Vdefinition} and therefore follows from finitely
many of them. Since $\Gamma$ is finite, some $N$ satisfies
$\Delta_N\models\Gamma$. Every identity in $\Delta_N$ holds in
$\mathcal V=\operatorname{Mod}(\Gamma)$, so $\Delta_N$ would itself
define $\mathcal V$. This contradicts \eqref{lr:nonconsequence}.
\end{proof}

\begin{lem}[Quadratic absorption closure]\label{lr:quadratic-closure}
Let $\mathbf P$ have no summand of length at least three. Write its
support as $(U,E)$, where $U$ is the set of its singleton summands and
$E\subseteq X\times X$ is the set of its quadratic summands. Put
$A=U\cup\{a:(a,b)\in E\text{ for some }b\}$, and let $G$ be the
undirected graph on $A$ whose edges are the unordered pairs underlying
the members of $E\cap(A\times A)$; loops are allowed. Define
\[
 E^*=\begin{cases}
 E\cup\{(b,a):(a,b)\in E,\ b\in A\},&G\text{ is bipartite},\\
 E\cup\{(b,a):(a,b)\in E\},&G\text{ is not bipartite}.
 \end{cases}
\]
The words absorbed by $\mathbf P$ in $\mathcal V$ are exactly the
singleton words in $U$ and the quadratic words in $E^*$.
\end{lem}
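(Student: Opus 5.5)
The plan is to prove the two inclusions separately and to get exactness from Lemma~\ref{lr:closure}. Let $K=U\cup E^*$. First I show that every word of $K$ is absorbed by $\mathbf P$ in $\mathcal V$. Then I show that $K$ is closed under all absorption rules coming from the defining identities \eqref{lr:Vdefinition}, allowing arbitrary polynomial substitutions and word contexts. Since $K$ contains $\operatorname{supp}(\mathbf P)$, Lemma~\ref{lr:closure} then says the absorbed words are exactly the members of $K$. In particular, no word of length at least three and no new singleton is absorbed.

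\textbf{Derivability.} Let $(a,b)\in E$ with $b\in A$.
\begin{itemize}
\item If $b\in U$, take \eqref{lr:L3} with $x=b$ and $y=a$. It gives $ba\preceq ab+b\preceq\mathbf P$.
\item If $b$ is the head of some $(b,c)\in E$, take \eqref{lr:L2} with $x=b$, $y=a$, $z=c$. It gives $ba\preceq ab+bc$.
\end{itemize}
This settles the bipartite case. In that case it also shows that every edge with both endpoints in $A$ becomes bidirected.

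Now suppose $G$ is not bipartite. Then $G$ contains an odd simple cycle, or a loop $aa$. A loop yields the closed walk $a,a,a$ of length $3$, since identifying variables is an allowed substitution. By the previous step, all edges of this odd closed walk are already available in both orientations. So for an odd length $n\geq3$, the words $z_1z_2+\cdots+z_nz_1$ form an instance of the premise of $\varepsilon_n$ whose words are already derived. Now take any $(a,b)\in E$ and set $x=b$, $y=a$ in $\varepsilon_n$. This gives $ba\preceq ab+(\text{cycle})$, so every reverse is derived.

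\textbf{Closedness.} The identities \eqref{lr:L1} and $t\preceq abc$ have only premise words of length at least three. They therefore never fire inside $K$, as in the proof of Lemma~\ref{lr:Vnfb}.

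For \eqref{lr:L2}, \eqref{lr:L3} and $\varepsilon_n$, the length argument of that proof applies. It forces empty contexts and images consisting of letters. It also shows that every conclusion is a quadratic word $xy$ whose reverse $yx$ already lies in $K$. Hence it suffices to show that $xy\in E^*$ in each case.
\begin{itemize}
\item For \eqref{lr:L3}, the premise forces $x\in U\subseteq A$.
\item For \eqref{lr:L2}, the premise contains some $xd\in E^*$, and this forces $x\in A$. Indeed, either $(x,d)\in E$, or it is the reverse of $(d,x)\in E$; in the latter case $x\in A$ by the definition of $E^*$.
\end{itemize}
In both cases, if $(y,x)\in E$ then $(x,y)$ is its permitted reverse, while if $(y,x)$ is itself a reverse then $(x,y)\in E$ already.

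For $\varepsilon_n$, choose one letter from each image. This gives an odd closed walk in $E^*$. A vertex $b\notin A$ has no outgoing edge in $E^*$: it heads no edge of $E$, and reverses out of $b$ are added only when $b\in A$. So the walk lies in $A$, and $G$ contains an odd closed walk, hence an odd cycle. Thus $G$ is non-bipartite and $E^*$ is symmetric, so $xy\in E^*$.

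\textbf{Main obstacle.} I expect the most care to be needed in this closedness check. One must make sure that polynomial images with several letters, loops, and endpoints outside $A$ cannot create an unintended reversal. The observation that vertices outside $A$ are sinks in $E^*$ is the key point to verify first.
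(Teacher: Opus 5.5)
Your proposal is correct and follows essentially the same route as the paper. The paper also derives the reversals of edges internal to $A$ from \eqref{lr:L2} and \eqref{lr:L3}, then all remaining reversals from $\varepsilon_n$ when $G$ is non-bipartite. It gets exactness by showing that $U\cup E^*$ is closed under every rule and applying Lemma~\ref{lr:closure}; your observation that vertices outside $A$ are sinks in $E^*$ is just a more explicit form of the paper's remark that every directed closed walk stays inside $A$.
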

\begin{proof}
Identity~\eqref{lr:L2} reverses an edge whose terminal vertex is the
initial vertex of another edge. Identity~\eqref{lr:L3} reverses an
edge whose terminal vertex belongs to $U$. Together they reverse
precisely the edges internal to $A$, without changing $A$, introducing
a new unordered pair, or changing $U$. Once these edges have been
reversed, a nonbipartite $G$ provides an odd directed closed walk.
A loop provides a closed walk of length three. Applying the
corresponding $\varepsilon_n$ reverses any chosen edge of $E$.
This proves that every listed word is absorbed.

For the converse, consider the support $(U,E^*)$. Every application
of \eqref{lr:L2} or \eqref{lr:L3} stays within that support. If $G$
is not bipartite, all its quadratic edges have already been reversed,
so an $\varepsilon_n$ adds nothing. If $G$ is bipartite, every
directed closed walk stays inside $A$, and every such walk has even
length. Thus no $\varepsilon_n$ can apply.

These observations include arbitrary polynomial substitutions and
multiplicative contexts. In a premise supported in lengths at most
two, a quadratic premise monomial forces each of its variables to
have an image consisting entirely of letters and forces both word
contexts to be empty. Every variable of \eqref{lr:L2},
\eqref{lr:L3}, or $\varepsilon_n$ occurs in a quadratic premise.
Selecting one letter from each image in an instance of
$\varepsilon_n$ gives an actual odd directed closed walk in $E^*$.
The premise of $t\preceq abc$ has length at least three, as do
the premises of \eqref{lr:L1}, so none applies. Hence the support
is closed under all the absorption rules. Lemma~\ref{lr:closure}
proves the assertion.
\end{proof}

We now give three small finite algebras that jointly generate
$\mathcal V$. Their construction also makes their operation tables
explicit. For a finite alphabet $X$ and a finite language
$K\subseteq X^+$, consider the ai-semiring of nonempty finite
languages, with union and concatenation. Identify $P$ and $Q$ when
\begin{equation}
 \boldsymbol\ell P\boldsymbol r\subseteq K
 \quad\Longleftrightarrow\quad
 \boldsymbol\ell Q\boldsymbol r\subseteq K
 \qquad(\boldsymbol\ell,\boldsymbol r\in X^*).
 \label{lr:syntactic-congruence}
\end{equation}
Empty contexts are allowed here; they are not constants in the
semiring signature. This is a congruence: union corresponds to the
conjunction of the two containment tests, while multiplication by a
finite language corresponds to a conjunction of tests with extended
contexts. Denote the quotient by $Q(K)$.

Take
\begin{align*}
 K_8&=\{p,q,pq,qp,pr,qr\},\qquad X_8=\{p,q,r\},\\
 K_6&=\{p,q,r,pp,qq,rr,pr,rp,qr,rq\},\qquad X_6=\{p,q,r\},\\
 K_4&=\{q,pp,pq,qp,qq\},\qquad X_4=\{p,q\},
\end{align*}
and write $Q_i=Q(K_i)$. Since these languages have no words of
length greater than two, only the empty context and contexts
consisting of one letter on one side can give a positive containment
test. Thus the classes can be read off from at most seven tests.
Intersecting the positive-test sets of the singleton languages
gives all classes, since a language passes a test exactly when each
of its words passes that test. The resulting algebras have
respectively eight, six and four elements.
In each table below, $\Omega$ is the class of $ppp$; the symbols
$p,q,r$ denote the corresponding singleton languages. The further
representatives are $s=p+q$, $u=p+r$, $v=q+r$, and $e=pq$ in $Q_8$;
in $Q_6$ they are $s=p+q$ and $e=pp$; in $Q_4$ one has $e=pp$.

\begin{center}
\small
\setlength{\tabcolsep}{3.2pt}
\begin{tabular}{c|cccccccc}
$+\ (Q_8)$&$p$&$q$&$r$&$s$&$u$&$v$&$e$&$\Omega$\\\hline
$p$&$p$&$s$&$u$&$s$&$u$&$\Omega$&$e$&$\Omega$\\
$q$&$s$&$q$&$v$&$s$&$\Omega$&$v$&$e$&$\Omega$\\
$r$&$u$&$v$&$r$&$\Omega$&$u$&$v$&$\Omega$&$\Omega$\\
$s$&$s$&$s$&$\Omega$&$s$&$\Omega$&$\Omega$&$e$&$\Omega$\\
$u$&$u$&$\Omega$&$u$&$\Omega$&$u$&$\Omega$&$\Omega$&$\Omega$\\
$v$&$\Omega$&$v$&$v$&$\Omega$&$\Omega$&$v$&$\Omega$&$\Omega$\\
$e$&$e$&$e$&$\Omega$&$e$&$\Omega$&$\Omega$&$e$&$\Omega$\\
$\Omega$&$\Omega$&$\Omega$&$\Omega$&$\Omega$&$\Omega$&$\Omega$&$\Omega$&$\Omega$
\end{tabular}

\medskip
\begin{tabular}{c|cccccccc}
$\cdot\ (Q_8)$&$p$&$q$&$r$&$s$&$u$&$v$&$e$&$\Omega$\\\hline
$p$&$\Omega$&$e$&$e$&$\Omega$&$\Omega$&$e$&$\Omega$&$\Omega$\\
$q$&$e$&$\Omega$&$e$&$\Omega$&$e$&$\Omega$&$\Omega$&$\Omega$\\
$r$&$\Omega$&$\Omega$&$\Omega$&$\Omega$&$\Omega$&$\Omega$&$\Omega$&$\Omega$\\
$s$&$\Omega$&$\Omega$&$e$&$\Omega$&$\Omega$&$\Omega$&$\Omega$&$\Omega$\\
$u$&$\Omega$&$\Omega$&$\Omega$&$\Omega$&$\Omega$&$\Omega$&$\Omega$&$\Omega$\\
$v$&$\Omega$&$\Omega$&$\Omega$&$\Omega$&$\Omega$&$\Omega$&$\Omega$&$\Omega$\\
$e$&$\Omega$&$\Omega$&$\Omega$&$\Omega$&$\Omega$&$\Omega$&$\Omega$&$\Omega$\\
$\Omega$&$\Omega$&$\Omega$&$\Omega$&$\Omega$&$\Omega$&$\Omega$&$\Omega$&$\Omega$
\end{tabular}

\medskip
\begin{tabular}{c|cccccc}
$+\ (Q_6)$&$p$&$q$&$r$&$s$&$e$&$\Omega$\\\hline
$p$&$p$&$s$&$p$&$s$&$e$&$\Omega$\\
$q$&$s$&$q$&$q$&$s$&$e$&$\Omega$\\
$r$&$p$&$q$&$r$&$s$&$e$&$\Omega$\\
$s$&$s$&$s$&$s$&$s$&$e$&$\Omega$\\
$e$&$e$&$e$&$e$&$e$&$e$&$\Omega$\\
$\Omega$&$\Omega$&$\Omega$&$\Omega$&$\Omega$&$\Omega$&$\Omega$
\end{tabular}
\qquad
\begin{tabular}{c|cccccc}
$\cdot\ (Q_6)$&$p$&$q$&$r$&$s$&$e$&$\Omega$\\\hline
$p$&$e$&$\Omega$&$e$&$\Omega$&$\Omega$&$\Omega$\\
$q$&$\Omega$&$e$&$e$&$\Omega$&$\Omega$&$\Omega$\\
$r$&$e$&$e$&$e$&$e$&$\Omega$&$\Omega$\\
$s$&$\Omega$&$\Omega$&$e$&$\Omega$&$\Omega$&$\Omega$\\
$e$&$\Omega$&$\Omega$&$\Omega$&$\Omega$&$\Omega$&$\Omega$\\
$\Omega$&$\Omega$&$\Omega$&$\Omega$&$\Omega$&$\Omega$&$\Omega$
\end{tabular}

\medskip
\begin{tabular}{c|cccc}
$+\ (Q_4)$&$p$&$q$&$e$&$\Omega$\\\hline
$p$&$p$&$p$&$\Omega$&$\Omega$\\
$q$&$p$&$q$&$e$&$\Omega$\\
$e$&$\Omega$&$e$&$e$&$\Omega$\\
$\Omega$&$\Omega$&$\Omega$&$\Omega$&$\Omega$
\end{tabular}
\qquad
\begin{tabular}{c|cccc}
$\cdot\ (Q_4)$&$p$&$q$&$e$&$\Omega$\\\hline
$p$&$e$&$e$&$\Omega$&$\Omega$\\
$q$&$e$&$e$&$\Omega$&$\Omega$\\
$e$&$\Omega$&$\Omega$&$\Omega$&$\Omega$\\
$\Omega$&$\Omega$&$\Omega$&$\Omega$&$\Omega$
\end{tabular}
\end{center}

\begin{lem}\label{lr:finite-generator}
The $0$-direct sum $B=Q_8\oplus_0Q_6\oplus_0Q_4$ has sixteen
elements and generates $\mathcal V$.
\end{lem}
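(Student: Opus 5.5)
The count is immediate. The three quotients share the class $\Omega$, and in the $0$-direct sum they are glued along it, so $|B|=8+6+4-2=16$. As with $A$ in \eqref{f65:source}, $B$ embeds into $Q_8\times Q_6\times Q_4$ (each element in its own coordinate, $\Omega$ elsewhere) and contains each $Q_i$ as a subalgebra. Hence $\V(B)=\V(Q_8,Q_6,Q_4)$, and it suffices to show $\V(Q_8,Q_6,Q_4)=\mathcal V$.

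\textbf{Soundness.} We check that each $Q_i$ satisfies the defining identities of $\mathcal V$ in \eqref{lr:Vdefinition}, using the syntactic description. An absorption $\mathbf u\preceq\mathbf Q$ holds in $Q(K)$ exactly when $K$ is closed under the corresponding absorption rule for all polynomial substitutions and word contexts, as in the proof of Lemma~\ref{f65:generate}. The identities \eqref{lr:L1} and $t\preceq abc$ are harmless because all words of length at least three lie outside $K_i$; the threefold products give the all-zero signature $\Omega$, which is additively greatest. For the remaining identities, the length argument from Lemma~\ref{lr:Vnfb} reduces each rule to letter substitutions with empty contexts. So we only need closure of the finite digraph $(U,E)$ of $K_i$:
\begin{itemize}
\item under reversal of an edge whose target starts another edge (\eqref{lr:L2});
\item under reversal of an edge whose target is a singleton (\eqref{lr:L3});
\item under reversal of an arbitrary edge when an odd closed walk is present (the $\varepsilon_n$).
\end{itemize}
We check these directly for each $K_i$. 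In $K_8$ the only unreversed edges $pr,qr$ end at $r$, which is neither a singleton nor an initial vertex; the edges $pq,qp$ give only an even $2$-cycle. In $K_6$ the edges $pp,qq,rr$ are loops that create odd walks, but every edge is already bidirected. In $K_4$ the loop $pp$ is present and all edges are bidirected.

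\textbf{Completeness.} Given an absorption $\mathbf w\preceq\mathbf P$ failing in $\mathcal V$, we separate it in some $Q_i$. The premise has no word of length at least three, so by Lemma~\ref{lr:quadratic-closure} $\mathbf w$ is not in $U\cup E^*$.
\begin{itemize}
\item If $|\mathbf w|\ge3$, send every variable to $q$ in $Q_4$: the premise maps into $\{q,qq\}\subseteq K_4$, whereas $\mathbf w$ does not.
\item If $\mathbf w=x$ with $x\notin U$, use $Q_8$. Send $x$ and every vertex outside $A$ to $r$, and send the other vertices to $p$ or $q$. No word starts with $r$ because of how $A$ is defined, and all quadratic words of $E$ then fall among $pq,qp,pr,qr,pp,qq$. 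This needs a proper $2$-colouring of $G$, so in the nonbipartite case we use $Q_6$ instead: send $x$ to a letter outside $K_6$, for instance $p+q$, or arrange a test.
\item If $\mathbf w=xy$ and the pair $\{x,y\}$ is not an edge, use $Q_6$ or $Q_4$ to detect the missing unordered pair. If $xy$ is the reverse of an edge $yx\in E$ that was not reversed, then $G$ is bipartite and $x\notin A$. In $Q_8$ send $x\mapsto r$, colour $A$ by $p,q$, and send all other vertices to $r$: the premise edge $yx$ maps to $pr$ or $qr$, while $xy$ maps to $rp$ or $rq\notin K_8$.
\item If $\mathbf w=x^2$ and $x^2\notin E$, use $Q_8$ with a $2$-colouring if $G$ is bipartite, or $Q_6$ with $x\mapsto p+q$, whose square contains $pq\notin K_6$.
\end{itemize}

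The main obstacle is this case analysis. The singleton and missing-pair cases for nonbipartite $G$ must be matched to the right quotient, and we must verify that every premise word actually lands in $K_i$, including singleton premises whose image must be a singleton of $K_i$. If a case fails, the fallback is to combine tests, using substitutions by sums of letters as in Lemma~\ref{f65:generate}.
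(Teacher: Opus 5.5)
\textbf{Gap in the singleton case.} Your count, the reduction to $\V(Q_8,Q_6,Q_4)$, and the soundness half are fine. Checking soundness by closure of each $K_i$ under the three letter-level reversal rules is a legitimate alternative to the paper's table check. The problem is the completeness case $\mathbf w=x$ with $x\notin U$.

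Sending $x\mapsto r$ in $Q_8$ works only when $x\notin A$. If $x\in A\setminus U$, i.e.\ $x$ begins some premise edge $xz$, your claim that no word starts with $r$ is false. That edge maps to a word beginning with $r$, which is not in $K_8$, so the premise test already fails. No assignment in $Q_8$ can fix this, because $Q_8\models x\preceq xy$. A quadratic premise word forces empty contexts and letter images, and the only letters beginning quadratic words of $K_8$ are $p,q$, both of which are singletons of $K_8$.

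Your nonbipartite fallback in $Q_6$ fails for the same reason. There is no letter outside $K_6$, and the language $p+q$ has both its words in $K_6$. Whenever $x$ occurs in a quadratic premise word, its image is a set of letters, all of which are singletons of $K_6$.

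Concretely, $x\preceq xy$ and $x\preceq yx+z_1z_2+z_2z_3+z_3z_1$ both fail in $\mathcal V$ by Lemma~\ref{lr:quadratic-closure}, since singletons are never added. Yet both hold in $Q_8$ and in $Q_6$. In $Q_8$, the second premise would need an odd directed closed walk inside $K_8$, and there is none. So ``combining tests'' in these two quotients cannot rescue the case.

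\textbf{The missing idea: use $Q_4$.} $K_4$ contains every quadratic word over $\{p,q\}$ and the singleton $q$, but not the singleton $p$. Assign $x\mapsto p$ and every other variable to $q$. Since $x\notin U$, the whole premise lands in $K_4$, while the target $p$ does not. This settles every singleton failure uniformly, whether or not $G$ is bipartite and whether or not $x\in A$. It is exactly how the paper argues.

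\textbf{Smaller points.}
\begin{enumerate}
\item In the missing-pair case, $Q_4$ is useless, again because $K_4$ contains all of $\{p,q\}^2$. You need $Q_6$ with $x\mapsto p$, $y\mapsto q$ and all other variables $r$, using $pq\notin K_6$.
\item Your list of edge images in $K_8$ wrongly includes $pp$ and $qq$, which are not in $K_8$. This is harmless under a proper colouring.
\item Your $x^2$ case via $x\mapsto p+q$ in $Q_6$ works for every $G$, provided the remaining variables are sent to $r$.
\end{enumerate}
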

\begin{proof}
The symbol $0$ in ``$0$-direct sum'' denotes the common multiplicative
zero $\Omega$, which is additively greatest, not an additive identity.
Explicitly, identify the three copies of $\Omega$, retain the two
operations within each component, and let both operations on elements
from different components have value $\Omega$. This is the subalgebra
of $Q_8\times Q_6\times Q_4$ consisting of tuples with at most one
coordinate different from $\Omega$. Its projections onto the three
factors are surjective. Therefore it is an ai-semiring and
\[
 |B|=8+6+4-2=16,\qquad
 \V(B)=\V(Q_8,Q_6,Q_4).
\]

Both $Q_6$ and $Q_4$ are commutative, and all their triple products
are $\Omega$, so they belong to $\mathcal C$. In $Q_8$, all products
are either $e$ or $\Omega$, and its non-$\Omega$ products are precisely
\[
 pq=pr=pv=qp=qr=qu=sr=e.
\]
The only directed cycle in the graph of these non-$\Omega$
products is $p\leftrightarrow q$.
An odd cyclic sequence therefore has at least one consecutive product
equal to $\Omega$. Every $\varepsilon_n$ holds because its right side
then evaluates to $\Omega$. All triple products are $\Omega$.

To check \eqref{lr:L2}, it is enough to consider assignments with
$yx=xz=e$. Then $x$ has both an incoming and an outgoing edge in
the displayed graph, so $x\in\{p,q\}$, and $y$ is the other one.
It follows that $xy=e$. To check \eqref{lr:L3}, assume $yx=e$.
If $x\in\{p,q\}$, again $xy=e$. Otherwise
$x\in\{r,u,v\}$ and $e+x=\Omega$. This proves both absorptions.
Thus $Q_8\in\mathcal V$ and $\V(B)\subseteq\mathcal V$.

For the reverse inclusion, it suffices to separate every absorption
$\mathbf w\preceq\mathbf P$ that fails in $\mathcal V$ in one of
the three factors. The polynomial $\mathbf P$ has no word of length
at least three, since such a word would make its value greatest.
Use the notation of Lemma~\ref{lr:quadratic-closure} and replace $E$
by its absorption closure.

If $\mathbf w=x\notin U$ is a singleton, assign $x=p$ and every
other variable $q$ in $Q_4$. All quadratic words have value $e$,
and all singleton summands of $\mathbf P$ have value $q$. Thus
$\mathbf P$ has value $q$ or $e$, neither of which absorbs $p$.
If $|\mathbf w|\geq3$, assign every variable $r$ in $Q_6$.
The target is $\Omega$, while the right side is $r$ or $e$.

Suppose next that $\mathbf w=xy$ and its unordered pair is absent
from $E$. If $x\ne y$, assign $x=p$, $y=q$, and every other variable
$r$ in $Q_6$. The target is $\Omega$. Every right-side quadratic
word evaluates to $e$, and all singleton values are at most $e$;
the right side therefore cannot absorb $\Omega$. If $x=y$, assign
$x=s$ and all other variables $r$. Now $s^2=\Omega$, whereas
$sr=rs=r^2=e$, giving the same conclusion.

The only remaining case is $xy\notin E$ but $yx\in E$.
The closure description implies $x\notin A$, $y\in A$, and that
the graph induced on $A$ is bipartite. Choose a proper two-coloring
of this graph, assign its two color classes the values $p,q$ in
$Q_8$, and assign every variable outside $A$ the value $r$.
Every right-side quadratic word evaluates to $e$: internal edges
join different colors, and the other edges go from $A$ to its
complement. Singleton summands have values $p$ or $q$, both at most
$e$. Since $yx$ occurs, the right side is exactly $e$.
The target is $rp$ or $rq$, both equal to $\Omega$.

All possible failures have been separated. Since every identity
is equivalent over $\AI$ to a finite family of word absorptions,
$\Id(B)=\Id(\mathcal V)$ and $\V(B)=\mathcal V$.
\end{proof}

\begin{proof}[Proof of Theorem~\ref{lr:counterexample}]
Take $S=B$ from Lemma~\ref{lr:finite-generator}. By
Lemma~\ref{lr:Vnfb}, this sixteen-element ai-semiring is nonfinitely
based. On the other hand, \eqref{lr:join} gives
\[
 \V(B\times L_2)=\mathcal V\vee\V(L_2)=\mathcal W,
\]
whose finite basis is supplied by Lemma~\ref{lr:Wbasis}.

Reversing multiplication gives a bijection between the identities
of an ai-semiring and those of its opposite and preserves equational
derivability. Hence it preserves finite and nonfinite basability.
Moreover,
\[
 (B\times L_2)^{\mathrm{op}}\cong B^{\mathrm{op}}\times R_2.
\]
Thus the sixteen-element algebra $B^{\mathrm{op}}$ gives the
corresponding reverse-direction counterexample for $R_2$.
\end{proof}

\begin{remark}
The two witnesses have sixteen elements. No minimality claim is made.
\end{remark}

\subsection{The null factor: a ten-element counterexample to the reverse implication}
\label{n:section}

The forward implication fails by Theorem~\ref{nf:forward-counterexample}.
For the reverse implication, we construct a ten-element commutative
ai-semiring $C_{10}$. The same source also gives the reverse
$D_2$ counterexample in Theorem~\ref{d:reversecounter}.
Throughout this subsection, $\infty$ denotes an additive greatest element
that is also a multiplicative zero; it is not a constant symbol in the
signature.

\begin{thm}\label{n:reverse-counterexample}
The ten-element commutative ai-semiring $C_{10}$ constructed below satisfies
\[
\neg\FB(C_{10}),\qquad \FB(C_{10}\times N_2).
\]
Every product of three elements of $C_{10}$ is $\infty$. Moreover,
$C_{10}\times N_2$ has the following finite identity basis relative to $\AI$:
\begin{equation}\label{n:finite-product-basis}
xy\approx yx,\qquad xyz\approx uvw,\qquad xy+uvw\approx uvw.
\end{equation}
\end{thm}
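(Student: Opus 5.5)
The plan is to split the statement into its two halves. For the first half, $\neg\FB(C_{10})$ is reduced to properties of the variety $\mathcal U$. For the second half, validity and completeness of the three identities in \eqref{n:finite-product-basis} are checked directly, using only the coarse description of $\Id(\mathcal U)$ supplied by Lemma~\ref{n:finite-normal-forms}. The construction comes first. Define $\mathcal U$ as the commutative variety satisfying $t\preceq abc$ together with an infinite family of $\varepsilon_n$-type absorptions. Realize $C_{10}=Q\oplus_0B$ as a $0$-direct sum of small syntactic quotients of finite languages of words of length at most two, as in Lemma~\ref{lr:finite-generator}. The identification of the common $\Omega$'s gives $\infty$, and every triple product is $\infty$.

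\textbf{Nonfinite basability.} I would first prove Lemma~\ref{n:finite-generator}, that $\V(C_{10})=\mathcal U$. The inclusion $\V(C_{10})\subseteq\mathcal U$ is a finite check of the defining absorptions in each component. The reverse inclusion is a separation argument for every word absorption $\mathbf w\preceq\mathbf P$ failing in $\mathcal U$. As in the $L_2$ case, I would describe the exact absorption closure of a quadratic support, presumably via an odd-cycle or bipartiteness criterion. Then, for each failing target, I would choose an assignment into a single component of $C_{10}$. Next comes Lemma~\ref{n:finite-variety-nfb}, that $\mathcal U$ is not finitely based. Here I would copy the proof of Lemma~\ref{lr:Vnfb}: exhibit, for each finite fragment $\Delta_N$, a closed word set built on an odd cycle of length $m>N$ that omits the target. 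Then apply Lemma~\ref{lr:closure} and compactness (Lemma~\ref{prelim:free-compactness}). Together these give $\neg\FB(C_{10})$.

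\textbf{Validity of \eqref{n:finite-product-basis}.} In $N_2$ every product is $0$ and $0$ is the additive bottom, so all three identities hold. In $\mathcal U$ commutativity is defining. Every word of length three has value $\infty$, which is additively greatest, so $xyz\approx uvw$ and $xy+uvw\approx uvw$ hold there as well.

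\textbf{Completeness.} Let $\mathbf f\approx\mathbf g$ hold in $C_{10}\times N_2$, and let $\Gamma$ be the proposed basis together with $\AI$. In $N_2$, assigning $1$ to one variable and $0$ to the rest shows that $\mathbf f$ and $\mathbf g$ have the same singleton support $U$. By Lemma~\ref{n:finite-normal-forms}, either both contain a word of length at least three or neither does. In the first case, fix such a word $\mathbf r$. Using $xyz\approx uvw$, all words of length at least three become equal to $\mathbf r$, since longer words factor into three nonempty pieces. Using $xy+uvw\approx uvw$ with $uvw\mapsto\mathbf r$ and arbitrary nonempty substitutions for $x,y$, every word of length at least two is absorbed. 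Hence $\Gamma\vdash\mathbf f\approx\sum U+\mathbf r\approx\mathbf g$. In the second case, Lemma~\ref{n:finite-normal-forms} gives equal commutative quadratic supports, and the singleton supports coincide. So $\mathbf f$ and $\mathbf g$ agree modulo commutativity and $\AI$.

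\textbf{Main obstacle.} The main difficulty lies in the two lemmas on $\mathcal U$, and above all in showing that ten elements suffice to separate every failing absorption. This requires an exact closure description of quadratic supports in $\mathcal U$; that description also yields Lemma~\ref{n:finite-normal-forms}. The product-basis part is routine once those lemmas are available.
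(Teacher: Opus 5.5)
Your proposal is correct and follows the paper's route. You get $\neg\FB(C_{10})$ from $\V(C_{10})=\mathcal U$ (Lemma~\ref{n:finite-generator}) together with Lemma~\ref{n:finite-variety-nfb}, and you obtain the basis for $C_{10}\times N_2$ by combining the singleton-support criterion for $N_2$ with the cubic/quadratic information of Lemma~\ref{n:finite-normal-forms}, exactly as in Theorem~\ref{n:join-finite-basis}. The differences are cosmetic: your direct derivation $\mathbf f\approx\sum U+\mathbf r\approx\mathbf g$ stands in for the paper's comparison of $\mathcal W$-normal forms and needs no exactness check, and your closed-word-set argument via Lemma~\ref{lr:closure} is equivalent to the paper's modified quotient, in which nonbipartiteness is replaced by an odd closed walk of length in $J$.
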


We first describe a nonfinitely based variety by its exact free-algebra
normal forms. We then show that it is generated by an explicitly defined
ten-element semiring.

\subsubsection{A variety defined by odd-cycle identities}

Let $\mathcal C$ be the variety defined by
\begin{equation}\label{n:C-three-nil}
\AI,\qquad xy\approx yx,\qquad t\preceq xyz.
\end{equation}
The last identity makes every triple product an additive greatest
element. All triple products therefore have the same value, and this
value is a multiplicative zero. Thus every word of length at least three
has value $\infty$.

For every odd integer $n\geq3$, put
\[
\mathbf Q_n=z_1z_2+z_2z_3+\cdots+z_{n-1}z_n+z_nz_1
\]
and define
\begin{equation}\tag{$\varepsilon_n$}\label{n:odd-cycle-law}
x\preceq xy+\mathbf Q_n,
\end{equation}
where $x,y,z_1,\ldots,z_n$ are distinct variables. Let $\mathcal U$ be
the subvariety of $\mathcal C$ defined by all these identities.

For a commutative polynomial with no word of length at least three,
write $A$ for its set of singleton-word summands and $E$ for its set of
quadratic summands. Regard $E$ as an undirected graph with loops: $uv$
is an edge and $u^2$ is a loop. Write $\operatorname{inc}(E)$ for the
set of vertices incident to an edge, including loops, and define
\[
\operatorname{cl}_E(A)=
\begin{cases}
A\cup\operatorname{inc}(E),&E\text{ is nonbipartite},\\
A,&E\text{ is bipartite}.
\end{cases}
\]
In particular, a graph with a loop is nonbipartite.

\begin{lem}\label{n:finite-normal-forms}
The free $\mathcal U$-algebra on a nonempty set $X$ has the following
normal forms:
\begin{enumerate}[label=\textup{(\roman*)}]
\item a single element $\infty$, representing every polynomial with a
word of length at least three;
\item pairs $(A,E)$, where $A$ and $E$ are not both empty,
$\operatorname{cl}_E(A)=A$, and $A$ is a finite subset of $X$ while $E$ is a finite set of
unordered pairs from $X$, with repetitions allowed within a pair.
\end{enumerate}
More precisely, a polynomial with singleton support $A$ and quadratic
support $E$ has normal form $(\operatorname{cl}_E(A),E)$.
\end{lem}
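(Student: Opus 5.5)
We prove the lemma by exhibiting an explicit algebra $F$ on these normal forms, checking that it lies in $\mathcal U$, and showing that every polynomial is $\mathcal U$-equivalent to its claimed normal form. By Lemma~\ref{lr:closure}, this amounts to computing exact absorption closures. The plan has two halves: soundness (every polynomial equals its normal form in $\mathcal U$) and separation (distinct normal forms are distinct in some member of $\mathcal U$, namely $F$ itself).

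\textbf{Soundness.} Any polynomial containing a word of length at least three equals $\infty$, by $t\preceq xyz$ and the discussion after \eqref{n:C-three-nil}. So let $\mathbf f$ have supports $(A,E)$ with only short words. If $E$ is nonbipartite, it contains an odd closed walk $z_1,\dots,z_n$; a loop $u^2$ gives the closed walk $u,u,u$ of length three. For a vertex $v$ incident to an edge $vw\in E$, substitute $x\mapsto v$, $y\mapsto w$ and the $z_i$ along the walk in \eqref{n:odd-cycle-law}. Then all premise words lie in $\operatorname{supp}(\mathbf f)$, using commutativity to orient edges. Hence $v$ is absorbed, and $\mathbf f\approx\mathbf f+\sum_{v\in\operatorname{inc}(E)}v$. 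Since $\operatorname{cl}_E$ depends only on $E$ and is idempotent, $(\operatorname{cl}_E(A),E)$ satisfies the side condition.

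\textbf{The model.} Let $F$ consist of $\infty$ together with the closed pairs $(A,E)$. Define
\[
(A,E)+(A',E')=\bigl(\operatorname{cl}_{E\cup E'}(A\cup A'),\,E\cup E'\bigr).
\]
Products are setwise commutative products of words, with any degree-$\ge3$ word giving $\infty$. Thus $(A,\emptyset)(A',\emptyset)=(\emptyset,AA')$ closed, and every product involving a nonempty $E$ part is $\infty$. This is a quotient of the commutative analogue of the algebra in Lemma~\ref{lr:Ccriterion} by the map $(A,E)\mapsto(\operatorname{cl}_E(A),E)$. The main work, and the main obstacle, is verifying that this map is a congruence.
\begin{itemize}
\item Compatibility with $+$ holds because $\operatorname{cl}_E$ is monotone in $E$, and a nonbipartite subgraph stays nonbipartite.
\item Compatibility with $\cdot$ holds because multiplying by any pair with $E\ne\emptyset$ yields $\infty$. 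The only nontrivial products are of pure singleton sets, on which closure acts trivially.
\end{itemize}
Given compatibility, associativity and distributivity are inherited, and commutativity and $t\preceq xyz$ are clear.

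\textbf{Checking $\varepsilon_n$ in $F$.} Evaluate the odd-cycle identity at arbitrary elements. If any image of $x,y,z_i$ has nonempty $E$ part or is $\infty$, both sides are $\infty$. So all images are of the form $(A_i,\emptyset)$. The right side then has $E$ part containing $a_1a_2,\dots,a_na_1$ for letters $a_i\in A_i$, an odd closed walk, so its $E$ is nonbipartite. Consequently its closure contains all of $\operatorname{inc}(E)\supseteq A_x$ via the edges $xy$, and $x$ is absorbed.

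\textbf{Separation.} Under $x\mapsto(\{x\},\emptyset)$, a short polynomial evaluates to $(\operatorname{cl}_E(A),E)$, and a long one evaluates to $\infty$. Distinct normal forms therefore take distinct values in $F\in\mathcal U$, which proves that they are distinct elements of the free algebra. The bipartite case needs no odd-walk argument beyond this evaluation.
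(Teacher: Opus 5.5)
Your proof is correct and follows the same route as the paper. Both arguments pass from the free $\mathcal C$-algebra on supports $(A,E)$ to its quotient by $\operatorname{cl}_E(A)=\operatorname{cl}_E(B)$, check compatibility using persistence of nonbipartiteness and the fact that a nonempty quadratic part multiplies to $\infty$, verify $\varepsilon_n$ by letter selection, and force the identifications by substituting an odd closed walk (a loop traversed three times).

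Your separation step via $x\mapsto(\{x\},\emptyset)$ makes explicit what the paper leaves implicit. Your phrase ``$(\emptyset,AA')$ closed'' must be read as the closure $(\operatorname{cl}_{AA'}(\emptyset),AA')$, since for instance $x\cdot x=(\{x\},\{x^2\})$ in $F$. That is what your quotient map gives, but as written it could be misread as claiming $(\emptyset,AA')$ is already closed.
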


\begin{proof}
First consider $\mathcal C$. Its free algebra consists of $\infty$
and all nonempty pairs $(A,E)$ without the closure condition. Addition
is union of the two components, with $\infty$ absorbing. A product is
$\infty$ if either factor is $\infty$ or has a nonempty quadratic
component. Otherwise, the product of $(A,\varnothing)$ and
$(B,\varnothing)$ has empty singleton support and edge set
\[
\{uv:u\in A,\ v\in B\}.
\]
These operations satisfy \eqref{n:C-three-nil}. Conversely,
distributivity and \eqref{n:C-three-nil} reduce every term to the stated
form, so this description is exact.

On this free algebra identify $(A,E)$ and $(B,E)$ precisely when
\[
\operatorname{cl}_E(A)=\operatorname{cl}_E(B).
\]
This is a congruence. For addition, nonbipartiteness persists when edges
are added, and
$\operatorname{inc}(E)\subseteq\operatorname{inc}(E\cup F)$.
Consequently, every singleton discarded by the relation is still
discarded after adding another pair. For multiplication, any change of
singleton support occurs with a nonempty quadratic component, and
every product of such an element with a nonempty term is $\infty$.

The quotient satisfies every identity \eqref{n:odd-cycle-law}.
Consider an arbitrary substitution of polynomials. If the right side
has value $\infty$, the inequality is automatic. Otherwise every term
substituted for $x,y,z_1,\ldots,z_n$ is a nonempty sum of singleton
words: a quadratic summand in any such term would produce a word of
length at least three in $xy$ or in an incident cycle edge. Choose one
letter from the term substituted for each $z_i$. The corresponding
cycle edges form an odd closed walk in the quadratic graph of the right
side. That graph is therefore nonbipartite. Every letter substituted
for $x$ is incident to an edge contributed by $xy$, so the closure
inserts all the singleton summands required on the left.

Conversely, suppose $E$ is nonbipartite. It has an odd closed walk of
some length $n\geq3$; a loop may be traversed three times. Substitute
the successive vertices of this walk for the $z_i$ in
\eqref{n:odd-cycle-law}. For any $r\in\operatorname{inc}(E)$, choose
an incident edge $rs$ and substitute $x=r$, $y=s$. All right-side
edges already belong to $E$. Adding the original polynomial to the
resulting absorption inserts the singleton $r$. Doing this for every
incident vertex gives precisely $\operatorname{cl}_E(A)$.
Thus the displayed congruence gives exactly the identifications made
by the defining identities of $\mathcal U$.
\end{proof}

\begin{lem}\label{n:finite-variety-nfb}
The variety $\mathcal U$ is nonfinitely based.
\end{lem}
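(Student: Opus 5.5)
The plan is to copy the compactness argument of Lemma~\ref{lr:Vnfb}. Lemma~\ref{prelim:free-compactness} reduces the statement to a single claim: for each $N\geq3$, let $\Delta_N$ consist of $\AI$, $xy\approx yx$, $t\preceq xyz$, and the laws $\varepsilon_n$ for odd $n\leq N$. We must show that $\Delta_N$ does not define $\mathcal U$. Indeed, a finite basis $\Gamma$ for $\mathcal U$ would follow from finitely many defining identities, hence from some $\Delta_N$. Then $\Delta_N$ would define $\mathcal U$, contradicting the claim.

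To prove the claim, fix an odd $m>N$ and distinct variables $x,y,c_0,\ldots,c_{m-1}$, with indices read modulo $m$. The identity $x\preceq xy+\sum_i c_ic_{i+1}$ is a renamed instance of $\varepsilon_m$, so it holds in $\mathcal U$. As a witness, take the set $K_m$ of commutative words consisting of $xy$ and the edges $c_ic_{i+1}$ of the odd $m$-cycle. This set contains the premise and omits the singleton $x$. It remains to check that $K_m$ is closed under every absorption rule coming from $\Delta_N$, in the sense of Lemma~\ref{lr:closure}. Since we work modulo commutativity, the words should be read as commutative monomials. Alternatively, one could apply Lemma~\ref{lr:closure} with the two orientations of $xy\approx yx$ included, and close $K_m$ under reversal, which is harmless.

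The closedness check goes rule by rule, as in Lemma~\ref{lr:Vnfb}. For commutativity and $t\preceq xyz$, every premise word has length at least three, except that commutativity with letter images and empty contexts merely reverses words; $K_m$ is closed under reversal by construction. Associativity and distributivity are already built into the word calculus. For $\varepsilon_n$ with $n\leq N$, every variable occurs in a quadratic premise word. The length argument therefore forces empty contexts and letter images whenever all premise words lie in $K_m$, which has only words of length two. Choosing one letter $d_i$ from the image of each $z_i$ gives an odd closed walk $d_1d_2,\ldots,d_nd_1$ of length $n$ in the graph of $K_m$. That graph is the disjoint union of an $m$-cycle and the single edge $\{x,y\}$, and we need it to have no odd closed walk of length less than $m$. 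The edge $\{x,y\}$ is a bipartite component, so no such walk can use it. An odd closed walk of length $n$ contains an odd simple cycle of length at most $n$, and the only simple cycle in the graph has length $m>n$. So no rule applies, and Lemma~\ref{lr:closure} gives $\Delta_N\not\models x\preceq xy+\sum c_ic_{i+1}$.

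As an independent check of the same non-consequence, one can build a model. Take the quotient of the free $\mathcal C$-algebra in which $(A,E)$ is identified with $(\operatorname{cl}'_E(A),E)$. Here $\operatorname{cl}'$ adds $\operatorname{inc}(E)$ only when $E$ contains an odd closed walk of length at most $N$; since the odd girth is the relevant quantity, this means a loop or an odd cycle of length at most $N$. The argument of Lemma~\ref{n:finite-normal-forms} shows that this relation is a congruence satisfying $\Delta_N$, because odd girth at most $N$ persists when edges are added. In this model the element $(\{\},E_m\cup\{xy\})$ does not absorb $x$. The main obstacle is making sure that polynomial images with several letters and nonempty contexts cannot smuggle in a short odd walk. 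The letter-selection argument above handles this exactly, as in the proof of Lemma~\ref{lr:quadratic-closure}.
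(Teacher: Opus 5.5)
Your proof is correct. Your main route is a different (dual) form of the paper's argument, and what you call the ``independent check'' is essentially the paper's proof.

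The paper builds no closed word set. It takes the normal-form congruence of Lemma~\ref{n:finite-normal-forms} on the free $\mathcal C$-algebra and replaces ``$E$ is nonbipartite'' by ``$E$ has an odd closed walk of length in $J$'', where $J$ is the finite set of indices actually used. This is still a congruence, because the property persists when edges are added. The letter-selection argument shows that the quotient satisfies each $\varepsilon_j$ with $j\in J$. Finally, the graph of $xy+\mathbf Q_n$ shows that $\varepsilon_n$ fails there.

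Your main route is instead the syntactic dual, modelled on Lemma~\ref{lr:Vnfb}. You exhibit one explicit finite set $K_m$ closed under every instance of $\Delta_N$ and conclude by Lemma~\ref{lr:closure}. Taking $K_m$ closed under reversal correctly absorbs the commutativity rule: an instance with all premise words of length two has letter images and empty contexts, and then merely reverses words.

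What your route buys is that you only verify closedness of a single finite set, and you never re-check compatibility of a congruence with both operations. The cost is that you must leave the commutative setting and treat $xy\approx yx$ as a rewriting rule on noncommutative words. The paper's route stays inside the commutative normal forms it has already established. It also yields a model of $\Delta_N$ for every finite $J$ in a uniform way, so the separation of $\varepsilon_n$ from finitely many $\varepsilon_j$ is visible semantically.

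Since Lemma~\ref{lr:closure} is itself proved by building the semiring of closed sets, the two arguments are the same separation seen from opposite sides.
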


\begin{proof}
Suppose that a finite set $\Delta$ is a basis for $\mathcal U$.
Each identity in $\Delta$ has a finite derivation from
\eqref{n:C-three-nil} and the family \eqref{n:odd-cycle-law}.
Hence all of $\Delta$ follows from \eqref{n:C-three-nil} and a finite
set of identities $\varepsilon_j$, with odd indices in a finite set
$J$. Choose an odd $n\geq3$ larger than every member of $J$; if $J$ is
empty, take $n=3$.

The normal-form congruence in the preceding proof remains valid if
``$E$ is nonbipartite'' is replaced by ``$E$ has an odd closed walk of
length belonging to $J$.'' Indeed, this property persists when edges
are added, and a substituted instance of $\varepsilon_j$ produces
a closed walk of exactly length $j$, by the same letter-selection
argument. The resulting quotient therefore satisfies
\eqref{n:C-three-nil} and all the selected $\varepsilon_j$.

Now work over the variables of $\varepsilon_n$. The graph of
$xy+\mathbf Q_n$ is the disjoint union of a single edge $xy$ and a
simple cycle of length $n$. It has no odd closed walk of length less
than $n$: the edge component is bipartite, and the shortest odd
closed walk in the cycle component has length $n$. Thus its singleton
support remains empty in the quotient just constructed. Adding $x$
changes its normal form, so $\varepsilon_n$ fails there.
It follows that $\varepsilon_n$ is not a consequence of the selected
finite axioms, and hence not a consequence of $\Delta$. This
contradicts its validity in $\mathcal U$.
\end{proof}

\subsubsection{An explicit ten-element generator}

Define a six-element algebra $Q$ on
\[
\{c,p,q,d,w,\infty\}.
\]
Addition is the join operation in the order
\[
c<p<d<w<\infty,\qquad c<q<d,
\]
where $p,q$ are incomparable; in particular, $p+q=d$.
Multiplication is given by
\[
\begin{array}{c|cccccc}
\cdot&c&p&q&d&w&\infty\\\hline
c&w&w&w&w&\infty&\infty\\
p&w&w&\infty&\infty&\infty&\infty\\
q&w&\infty&w&\infty&\infty&\infty\\
d&w&\infty&\infty&\infty&\infty&\infty\\
w&\infty&\infty&\infty&\infty&\infty&\infty\\
\infty&\infty&\infty&\infty&\infty&\infty&\infty
\end{array}
\]

Define a five-element algebra $B$ on
\[
\{a,b,c,w,\infty\}.
\]
Its addition and multiplication are given by
\[
\begin{array}{c|ccccc}
+&a&b&c&w&\infty\\\hline
a&a&a&\infty&\infty&\infty\\
b&a&b&w&w&\infty\\
c&\infty&w&c&w&\infty\\
w&\infty&w&w&w&\infty\\
\infty&\infty&\infty&\infty&\infty&\infty
\end{array}
\qquad
\begin{array}{c|ccccc}
\cdot&a&b&c&w&\infty\\\hline
a&\infty&\infty&w&\infty&\infty\\
b&\infty&\infty&w&\infty&\infty\\
c&w&w&\infty&\infty&\infty\\
w&\infty&\infty&\infty&\infty&\infty\\
\infty&\infty&\infty&\infty&\infty&\infty
\end{array}
\]
Thus the only products in $B$ different from $\infty$ are
$ac=ca=bc=cb=w$.

\begin{lem}\label{n:small-components}
The algebras $Q$ and $B$ belong to $\mathcal U$.
\end{lem}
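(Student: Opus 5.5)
We must show that $Q$ and $B$ are ai-semirings satisfying commutativity, $t\preceq xyz$, and every $\varepsilon_n$.

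\textbf{Ai-semiring axioms and $\mathcal C$.} Addition in $Q$ is the join of a finite order, hence a semilattice; in $B$ the table is commutative and idempotent, and associativity follows from the order $b<a<\infty$, $b<w$, $c<w<\infty$ (with $a+c=a+w=\infty$). Both multiplication tables are symmetric. Every nonzero product lies in $\{w\}$ and $w$ times anything is $\infty$, so all triple products equal $\infty$. Hence associativity holds trivially.

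For distributivity $x(y+z)=xy+xz$, first dispose of the cases where a factor is $w$ or $\infty$, where both sides are $\infty$. In $Q$, check the small cases directly, noting that the product is antitone in the order ($c$ has the most $w$-products, and $d$ only $dc=w$). In $B$, $x\cdot y$ is $w$ exactly when one of $x,y$ is $c$ and the other is in $\{a,b\}$, and sums like $a+c=\infty$ give $\infty$ on both sides. This finite check is routine.

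Since $\infty$ is additively greatest and every triple product is $\infty$, $t\preceq xyz$ holds, so $Q,B\in\mathcal C$.

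\textbf{The odd-cycle laws.} Fix an assignment of $x,y,z_1,\ldots,z_n$. If the right side $xy+\mathbf Q_n$ is $\infty$, we are done. Otherwise every product $xy$ and $z_iz_{i+1}$ is a non-$\infty$ product. Every non-$\infty$ product equals $w$, so the right side is $w$. The law then requires $x\leq w$, which is the key point.

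In $B$, the graph of pairs with non-$\infty$ product is bipartite, with parts $\{c\}$ and $\{a,b\}$. An odd cycle of such products is therefore impossible, so the right side is $\infty$ and $\varepsilon_n$ holds automatically.

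In $Q$, the pairs with product $w$ are $cc,cp,cq,cd,pp,qq$. Any $x$ with $xy=w$ for some $y$ lies in $\{c,p,q,d\}$, and all of these are at most $w$. Hence $x+w=w$ and $\varepsilon_n$ holds in $Q$ regardless of bipartiteness; the loops in $Q$ are what allow odd cycles.

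The main obstacle is only the bookkeeping of the distributivity check. The conceptual content is that $Q$ realises the closure with nonbipartite $E$, while $B$ has bipartite support, so $\varepsilon_n$ is vacuous there.
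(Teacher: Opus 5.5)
Your proof is correct and follows the paper's argument. You verify the $\mathcal C$-identities via symmetric tables and triple products equal to $\infty$, use that every element of $Q$ with a non-$\infty$ product lies in $\{c,p,q,d\}$ and hence below $w$, and use that the non-$\infty$ products in $B$ form a bipartite $\{c\}$ versus $\{a,b\}$ pattern with no odd cycles. The only difference is that you leave distributivity to a routine finite check, whereas the paper verifies it conceptually: in $Q$ it views $c,p,q,d$ as subsets of $\{P,Q\}$ with a forbidden-pair product, and in $B$ it checks that each map $y\mapsto ty$ preserves joins. Also, $y\mapsto xy$ is isotone into $w<\infty$, not antitone; what shrinks as $x$ grows is only its set of $w$-partners.
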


\begin{proof}
For $Q$, identify $c,p,q,d$ with the subsets
$\varnothing,\{P\},\{Q\},\{P,Q\}$ of a two-element set.
Their addition is union. The product of two such subsets is
$\infty$ if one contains $P$ and the other contains $Q$, in either
order; otherwise it is $w$. The forbidden-pair condition distributes
over union, proving distributivity on these four elements. The cases
with $w$ or $\infty$ follow because these elements lie above all four
and multiply every element to $\infty$. Every triple product is
$\infty$, proving multiplicative associativity. The remaining
ai-semiring laws follow from the displayed join order.

For $B$, addition is the join operation in the order
$b<a<\infty$ and $b,c<w<\infty$, with $a$ incomparable with $c,w$.
Multiplication by $c$ sends $a,b$ to $w$ and $c,w,\infty$ to
$\infty$; this map preserves every join in the displayed addition
table. Multiplication by $a$ or $b$ sends $c$ to $w$ and every other
element to $\infty$, which also preserves all these joins.
Multiplication by $w$ or $\infty$ is constant. Thus distributivity
holds. Every triple product is $\infty$, so associativity holds as
well. Both algebras therefore satisfy \eqref{n:C-three-nil}.

In $Q$, if the value of $x$ belongs to $\{c,p,q,d\}$, it is at most
$w$, whereas every quadratic word has value $w$ or $\infty$.
If $x$ has value $w$ or $\infty$, the guard $xy$ has value
$\infty$. This proves every $\varepsilon_n$ in $Q$.

In $B$, the value of $\mathbf Q_n$ is always $\infty$. Otherwise
every cycle-edge product would be $w$. The successive vertex values
would then have to alternate between $\{a,b\}$ and $\{c\}$,
which is impossible on an odd cycle. A vertex value $w$ or $\infty$
already makes an incident product $\infty$. Hence all
$\varepsilon_n$ hold in $B$ as well.
\end{proof}

Take disjoint copies of $Q$ and $B$, identify their elements
$\infty$, retain the operations within each component, and define
both the sum and product of elements from different components to be
$\infty$. Denote the resulting $0$-direct sum by
\begin{equation}\label{n:S-definition}
C_{10}=Q\oplus_0 B.
\end{equation}
The symbols $c,w$ in the two components denote distinct elements of
$C_{10}$; only the two copies of $\infty$ are identified. Therefore
$|C_{10}|=6+5-1=10$.

Each component embeds in $C_{10}$, and $C_{10}$ embeds in $Q\times B$ by sending
an element to itself in its own coordinate and to $\infty$ in the
other coordinate. The image consists of the two coordinate copies
with common element $(\infty,\infty)$ and is closed under both
operations. In particular, $C_{10}$ is an ai-semiring and
\[
\operatorname{Id}(C_{10})
=\operatorname{Id}(Q)\cap\operatorname{Id}(B).
\]

\begin{lem}\label{n:finite-generator}
The ten-element semiring $C_{10}$ in \eqref{n:S-definition} generates
$\mathcal U$.
\end{lem}

\begin{proof}
By Lemma~\ref{n:small-components}, $\V(C_{10})\subseteq\mathcal U$.
For the reverse inclusion, we separate any two distinct normal forms
of Lemma~\ref{n:finite-normal-forms} by an assignment in $Q$ or $B$.

If one form is $\infty$ and the other is not, assign every variable
$c$ in $Q$. The form without cubic words takes value $c$ or $w$,
whereas the form with a cubic takes value $\infty$.

Suppose next that the quadratic supports differ. Choose an edge in
the first support that is absent from the second. If it is $xy$ with
$x\ne y$, assign $x=p$, $y=q$ and all other variables $c$ in $Q$.
Precisely that quadratic edge has value $\infty$; every other
quadratic word has value $w$, and all singleton values are at most
$w$. Thus the first form has value $\infty$ and the second has value
at most $w$. If the disputed edge is $x^2$, assign $x=d$ and all other
variables $c$; the same argument applies.

It remains to consider equal quadratic supports $E$ but distinct
closed singleton supports. Choose $x$ in the first closed singleton
support and not in the second. If $E$ is bipartite, choose a
bipartition with $x$ in its first part, assigning isolated vertices
arbitrarily. In $B$, assign $x=a$, all other variables in its part
$b$, and all variables in the second part $c$. Every quadratic edge
has value $w$. The second form has value in $\{b,c,w\}$, since it
does not contain the singleton $x$; none of these three elements is
above $a$. The first form is above $a$, so their values differ.

If $E$ is nonbipartite, the absence of $x$ from the second closed
singleton support implies $x\notin\operatorname{inc}(E)$.
Assign $x=\infty$ and every other variable $c$ in $Q$. The second
form has value $w$, because $E$ is nonempty and no word of that form
contains $x$. The first form has value $\infty$. This completes all
cases, so $C_{10}$ and $\mathcal U$ have exactly the same identities.
\end{proof}

\subsubsection{A finite basis after adjoining the null factor}

\begin{thm}\label{n:join-finite-basis}
The variety $\mathcal U\vee\V(N_2)$ is defined by $\AI$ and
\eqref{n:finite-product-basis}.
\end{thm}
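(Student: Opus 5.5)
We will show that the identities of $\mathcal U\vee\V(N_2)$, which are exactly those valid in both $\mathcal U$ and $N_2$, are all derivable from $\Gamma$. Here $\Gamma$ consists of $\AI$ together with the three identities in \eqref{n:finite-product-basis}.

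\textbf{Soundness.} First check that $\Gamma$ holds in both varieties. In $\mathcal U$, commutativity is a defining law. Every triple product equals the greatest element $\infty$, which gives $xyz\approx uvw$. The identity $xy+uvw\approx uvw$ holds because $uvw=\infty$ is additively greatest. In $N_2$, every word of length at least two takes the value $0$, and $N_2$ is commutative, so all three identities hold.

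\textbf{The $N_2$ criterion.} Next recall the equational theory of $N_2$. An identity $\mathbf f\approx\mathbf g$ holds in $N_2$ exactly when either both sides contain a word of length at least two and have the same singleton support, or both sides are sums of variables with the same support. To see this, note that the value of a polynomial is the join of the values of its singleton summands, since all longer words vanish. Assigning $1$ to a single variable then separates any two different singleton supports. Moreover, a polynomial with no longer words cannot equal one with a longer word $\mathbf w$: assign $0$ to everything, and the left side is $0$, while the right side is also $0$ only because of $x\approx x^2+x$. So the distinction must come from $\mathcal U$ as well.

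\textbf{Completeness.} Let $\mathbf f\approx\mathbf g$ hold in both varieties, and write $A,A'$ for the singleton supports. By Lemma~\ref{n:finite-normal-forms}, either both sides contain a word of length at least three, or neither does. In the latter case, both sides have the same quadratic support $E$ and satisfy $\operatorname{cl}_E(A)=\operatorname{cl}_E(A')$. Validity in $N_2$ gives $A=A'$ in either case: if $E$ is nonempty, the first alternative of the criterion applies; if $E$ is empty, $\mathbf f$ and $\mathbf g$ are sums of variables and the supports must coincide. Hence in the no-cubic case $\mathbf f$ and $\mathbf g$ have identical supports and are equal modulo $\AI$ and commutativity.

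In the cubic case, both sides have the form $\sum A+(\text{words of length }\geq2)$ and contain at least one word of length at least three. First use $xyz\approx uvw$ to identify all words of length at least three with one fixed cubic word $\mathbf r$. This requires splitting each such word into three nonempty factors, and the identity then collapses all of them to a single term, with idempotence removing duplicates. Then use $xy+uvw\approx uvw$ to delete every quadratic summand in the presence of $\mathbf r$. Both sides reduce to $\sum A+\mathbf r$, since $A=A'$.

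\textbf{Expected obstacle.} The main point needing care is that the case split is exhaustive. In particular, the no-cubic, $E$-nonempty case must use the equality $A=A'$ coming from $N_2$, not merely the equality of closures coming from $\mathcal U$. This is exactly where the joined $N_2$ factor eliminates the odd-cycle identities, which fail in $N_2$ because they add a singleton.
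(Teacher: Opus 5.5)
Your argument is correct and essentially the paper's proof. The paper also uses Lemma~\ref{n:finite-normal-forms} to split into the cubic and non-cubic cases, and it uses $N_2$ only to force equal singleton supports. It then observes that, modulo the proposed identities, a polynomial is determined by $(A,\top)$ or by $(A,E)$; your explicit reduction to $\sum A+\mathbf r$ is exactly the derivation behind the $(A,\top)$ form. You also rightly omit the paper's exactness check for these forms, which soundness makes unnecessary.

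One correction is needed: your $N_2$ criterion is false in its ``only if'' direction. The identities $x\approx x^2+x$ (listed in Table~\ref{tab:two-element-semirings} as part of the basis of $N_2$) and $x\approx x+xy$ both hold in $N_2$, although only one side contains a longer word. Your sentence about assigning $0$ to every variable, which itself invokes $x\approx x^2+x$, should therefore be deleted. The correct criterion, as in the paper, is that $N_2\models\mathbf f\approx\mathbf g$ if and only if $\mathbf f$ and $\mathbf g$ have the same singleton support.

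Your completeness argument only uses the consequence $A=A'$, and $\mathcal U$ already excludes the mixed situations, so nothing else changes. You should, however, state explicitly that $A=A'$ also holds in the cubic case, since your final reduction to $\sum A+\mathbf r$ relies on it.
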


\begin{proof}
Let $\mathcal W$ be the variety defined by those identities. They
hold in both $\mathcal U$ and $N_2$, so
$\mathcal U\vee\V(N_2)\subseteq\mathcal W$.

The identities of $\mathcal W$ give the following exact polynomial
normal forms. A polynomial without a cubic word is described by its
singleton support $A$ and its quadratic support $E$. A polynomial
with a cubic word is described by $(A,\top)$: all words of length
at least three are equal, and their common value absorbs every
quadratic summand, but no singleton summand is removed. These forms
are exact, as can also be checked by defining addition to take unions
of singleton supports and unions of quadratic supports, with $\top$
absorbing only in the nonlinear component. A product has empty
singleton support; its nonlinear component is the usual set of cross
edges when both factors have only singleton summands, and is $\top$
if either factor has a quadratic or cubic summand. The resulting
algebra satisfies the proposed basis and separates the displayed
forms.

An identity holds in $N_2$ precisely when its two sides have the same
singleton supports, because every longer word evaluates to the least
element. Intersect this condition with the normal-form description
for $\mathcal U$. For two polynomials without cubic words, equality
in $\mathcal U$ requires equal quadratic supports and equal closed
singleton supports. If the original singleton supports are already
equal, this is exactly equality of $A$ and $E$. For two polynomials
with cubic words, equality in $\mathcal U$ is automatic, so only
equality of singleton supports remains. A polynomial with a cubic
word is never equal in $\mathcal U$ to one without a cubic word.
These are precisely the three equality conditions for the
$\mathcal W$ normal forms. Hence
\[
\operatorname{Id}(\mathcal U)\cap\operatorname{Id}(N_2)
=\operatorname{Id}(\mathcal W),
\]
which proves the reverse inclusion and the claimed basis.
\end{proof}

\begin{proof}[Proof of Theorem~\ref{n:reverse-counterexample}]
Lemmas~\ref{n:finite-variety-nfb} and \ref{n:finite-generator} show that
$C_{10}$ is nonfinitely based.
On the other hand,
\[
\V(C_{10}\times N_2)=\V(C_{10})\vee\V(N_2)
=\mathcal U\vee\V(N_2),
\]
which has the finite basis in Theorem~\ref{n:join-finite-basis}.
Both components of $C_{10}$, and hence their $0$-direct sum, are
commutative and have every triple product equal to $\infty$.
\end{proof}

\begin{remark}
The same ten-element source is used for the reverse implications for
$N_2$ and $D_2$. No minimality assertion is made for its order.
\end{remark}

\begin{proof}[Proof of Theorem~\ref{intro:main}]
The universal equivalences are Theorems~\ref{m:equivalence} and
\ref{t:equivalence}. The forward-direction counterexamples are
Theorems~\ref{nf:forward-counterexample},
\ref{lf:forward-counterexample} and \ref{d:forward-counterexample}.
The reverse-direction counterexamples are
Theorems~\ref{d:reversecounter}, \ref{lr:counterexample} and
\ref{n:reverse-counterexample}. These theorems also establish all
the stated structural properties of the witnesses.
\end{proof}

\section{Structural classes with the equivalence}

The unrestricted classification leaves room for positive results on
structurally defined classes. The elementary observation below gives
one such class for each of $L_2,R_2,D_2,N_2$.

\begin{lem}\label{recovery:absorption}
Let $S$ and $E$ be algebras of the same signature. If $E\in\V(S)$, then
\[
 \V(S\times E)=\V(S).
\]
Consequently, $S\times E$ is finitely based if and only if $S$ is
finitely based. In particular, this conclusion holds if $E$ is
isomorphic to a subalgebra or a homomorphic image of $S$.
\end{lem}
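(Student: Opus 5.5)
The plan is to show that adjoining a factor already lying in $\V(S)$ changes neither the equational theory nor, therefore, the variety. By \eqref{intro:product-join}, or directly because identities of a product are the identities common to its factors, we have $\Id(S\times E)=\Id(S)\cap\Id(E)$. The hypothesis $E\in\V(S)$ means every identity of $S$ holds in $E$, so $\Id(S)\subseteq\Id(E)$. Hence $\Id(S\times E)=\Id(S)$.

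Varieties are determined by their equational theories (Birkhoff), so $\V(S\times E)=\V(S)$. Alternatively, one argues containments directly. $S\times E$ is a product of members of $\V(S)$, so $\V(S\times E)\subseteq\V(S)$. Conversely, $S$ is a homomorphic image of $S\times E$ under the first projection, which is surjective because $E$ is nonempty (all algebras are nonempty here). Thus $S\in\V(S\times E)$.

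Finite basability is a property of the equational theory alone: a finite set $\Sigma$ is a basis for $S$ exactly when its consequences are $\Id(S)$. Equal theories therefore give the equivalence $\FB(S)\Leftrightarrow\FB(S\times E)$.

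For the final sentence, note that varieties are closed under subalgebras and homomorphic images, and $S\in\V(S)$. Any algebra isomorphic to a subalgebra or quotient of $S$ therefore lies in $\V(S)$, and the first part applies.

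There is no real obstacle; the only point needing care is nonemptiness of $E$, which is needed for the projection to be onto. This is guaranteed by the paper's standing convention that algebras are nonempty.
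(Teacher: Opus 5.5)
Your proof is correct and matches the paper's: both use $\Id(S\times E)=\Id(S)\cap\Id(E)$ and $\Id(S)\subseteq\Id(E)$ to show that $S$ and $S\times E$ have the same identities, hence the same variety and the same finite basis property. Your projection argument, the remark on nonemptiness, and your treatment of the final clause via closure of $\V(S)$ under subalgebras and homomorphic images are harmless additions that the paper leaves implicit.
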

\begin{proof}
Every identity of a direct product holds in each factor, and conversely.
Thus
\[
 \Id(S\times E)=\Id(S)\cap\Id(E).
\]
The hypothesis $E\in\V(S)$ gives $\Id(S)\subseteq\Id(E)$, so the
intersection is $\Id(S)$. Hence $S$ and $S\times E$ have precisely the
same identities and, in particular, the same finite basis property.
\end{proof}

\begin{remark}[Structural classes on which equivalence is restored]
\label{recovery:structural}
Although the unrestricted equivalence fails for the four factors under
consideration, it holds on the following explicit structural classes.
All algebras are still considered in the constant-free signature
$(+,\cdot)$; elements denoted by $0$ and $1$ are not added to the signature.
\begin{enumerate}
\item For $E=L_2$, it is enough that the multiplicative reduct of $S$
contain a nontrivial left-zero subsemigroup.
\item For $E=R_2$, it is enough that the multiplicative reduct of $S$
contain a nontrivial right-zero subsemigroup.
\item For $E=D_2$, every nontrivial unital ai-semiring with a semiring
zero has the required property. Here a semiring zero is an additive
identity that is absorbing for multiplication, and the multiplicative
identity is distinct from it.
\item For $E=N_2$, every nontrivial nil ai-semiring with a semiring zero
has the required property. More generally, it suffices that a semiring
zero exist and that the multiplicative reduct contain a nonzero
nilpotent element.
\end{enumerate}
These sufficient conditions are structural; no finite identity basis
or additional identity of $S$ is assumed. They are not asserted to be
necessary or maximal.

Here are direct proofs. In the first case, choose distinct elements
$a,b$ of a left-zero subsemigroup, so
\[
 a^2=a,\qquad ab=a,\qquad ba=b,\qquad b^2=b.
\]
Put $c=a+b$. Distributivity gives
\[
 ac=a,\quad ca=c,\quad bc=b,\quad cb=c,\quad c^2=c.
\]
At least one of $a,b$ differs from $c$; denote such an element by $d$.
The set $\{d,c\}$ is a subsemiring, with $d+c=c$ and multiplication
equal to the left projection. It is therefore isomorphic to $L_2$.
The proof for $R_2$ follows by reversing multiplication.

For the third case, $\{0,1\}$ is a subsemiring whose addition is the
maximum and whose multiplication is the minimum on $0<1$. Thus it is
isomorphic to $D_2$.

For the fourth case, let $a\ne0$ be nilpotent and choose the least
$k\geq2$ such that $a^k=0$. Put $b=a^{k-1}$. Then $b\ne0$ and
$b^2=a^{2k-2}=0$, since $2k-2\geq k$. Hence $\{0,b\}$ is a
subsemiring with $0+b=b$ and constant multiplication equal to $0$,
and is isomorphic to $N_2$. In all four cases,
Lemma~\ref{recovery:absorption} proves the claimed equivalence.
\end{remark}


\begin{thebibliography}{99}

\bibitem{BS1981}
Burris, S.N., Sankappanavar, H.P.:
A Course in Universal Algebra.
Graduate Texts in Mathematics, vol.~78.
Springer-Verlag, New York (1981).

\bibitem{Dolinka2007}
Dolinka, I.:
A nonfinitely based finite semiring.
Int. J. Algebra Comput. \textbf{17}(8), 1537--1551 (2007).
\href{https://doi.org/10.1142/S0218196707004177}{\nolinkurl{doi:10.1142/S0218196707004177}}.

\bibitem{Dolinka2009INFB}
Dolinka, I.:
A class of inherently nonfinitely based semirings.
Algebra Univers. \textbf{60}, 19--35 (2009).
\href{https://doi.org/10.1007/s00012-008-2084-y}{\nolinkurl{doi:10.1007/s00012-008-2084-y}}.

\bibitem{Dolinka2009Remark}
Dolinka, I.:
A remark on nonfinitely based semirings.
Semigroup Forum \textbf{78}, 368--373 (2009).
\href{https://doi.org/10.1007/s00233-008-9096-y}{\nolinkurl{doi:10.1007/s00233-008-9096-y}}.

\bibitem{Dolinka2009End}
Dolinka, I.:
The finite basis problem for endomorphism semirings of finite semilattices with zero.
Algebra Univers. \textbf{61}(3--4), 441--448 (2009).
\href{https://doi.org/10.1007/s00012-009-0024-0}{\nolinkurl{doi:10.1007/s00012-009-0024-0}}.

\bibitem{DolinkaGusevVolkov2025End}
Dolinka, I., Gusev, S.V., Volkov, M.V.:
The finite basis problem for the endomorphism semirings of finite semilattices.
Bull. Belg. Math. Soc. Simon Stevin \textbf{32}(5), 657--674 (2025).
\href{https://doi.org/10.36045/j.bbms.250801}{\nolinkurl{doi:10.36045/j.bbms.250801}}.

\bibitem{Gao2026}
Gao, Z.D.:
A finitely based finite semiring generates a variety with continuum many subvarieties.
Preprint, arXiv:2603.00893 (2026).
\href{https://arxiv.org/abs/2603.00893}{\nolinkurl{arXiv:2603.00893}}.

\bibitem{Graczynska1983}
Graczy\'{n}ska, E.:
On regular identities.
Algebra Univers. \textbf{17}, 369--375 (1983).
\href{https://doi.org/10.1007/BF01194544}{\nolinkurl{doi:10.1007/BF01194544}}.

\bibitem{GusevVolkov2023Power}
Gusev, S.V., Volkov, M.V.:
Semiring and involution identities of power groups.
J. Aust. Math. Soc. \textbf{115}(3), 354--374 (2023).
\href{https://doi.org/10.1017/S1446788722000374}{\nolinkurl{doi:10.1017/S1446788722000374}}.

\bibitem{GusevVolkov2026Chains}
Gusev, S.V., Volkov, M.V.:
The finite basis problem for endomorphism semirings of finite chains.
Int. J. Algebra Comput. (2026), published online.
\href{https://doi.org/10.1142/S0218196726400084}{\nolinkurl{doi:10.1142/S0218196726400084}}.

\bibitem{HW1998}
Hebisch, U., Weinert, H.J.:
Semirings: Algebraic Theory and Applications in Computer Science.
Series in Algebra, vol.~5.
World Scientific, Singapore (1998).
\href{https://doi.org/10.1142/3903}{\nolinkurl{doi:10.1142/3903}}.

\bibitem{JRZ}
Jackson, M., Ren, M.M., Zhao, X.Z.:
Nonfinitely based ai-semirings with finitely based semigroup reducts.
J. Algebra \textbf{611}, 211--245 (2022).
\href{https://doi.org/10.1016/j.jalgebra.2022.07.042}{\nolinkurl{doi:10.1016/j.jalgebra.2022.07.042}}.

\bibitem{KP2005}
Ku\v{r}il, M., Pol\'{a}k, L.:
On varieties of semilattice-ordered semigroups.
Semigroup Forum \textbf{71}, 27--48 (2005).
\href{https://doi.org/10.1007/s00233-004-0176-3}{\nolinkurl{doi:10.1007/s00233-004-0176-3}}.

\bibitem{Pastijn2006}
Pastijn, F.:
Weak commutativity in idempotent semirings.
Semigroup Forum \textbf{72}, 283--311 (2006).
\href{https://doi.org/10.1007/s00233-005-0543-8}{\nolinkurl{doi:10.1007/s00233-005-0543-8}}.

\bibitem{RJZL}
Ren, M.M., Jackson, M., Zhao, X.Z., Lei, D.L.:
Flat extensions of groups and limit varieties of additively idempotent semirings.
J. Algebra \textbf{623}, 64--85 (2023).
\href{https://doi.org/10.1016/j.jalgebra.2023.01.027}{\nolinkurl{doi:10.1016/j.jalgebra.2023.01.027}}.

\bibitem{RLZC2025}
Ren, M.M., Liu, J.Y., Zeng, L.L., Chen, M.L.:
The finite basis problem for additively idempotent semirings of order four, I.
Semigroup Forum \textbf{110}(2), 422--457 (2025).
\href{https://doi.org/10.1007/s00233-025-10520-7}{\nolinkurl{doi:10.1007/s00233-025-10520-7}}.

\bibitem{RLYC2026}
Ren, M.M., Liu, Z.X., Yue, M.Y., Chen, Y.Z.:
The finite basis problem for additively idempotent semirings of order four, III.
Semigroup Forum \textbf{112}(2), 541--573 (2026).
\href{https://doi.org/10.1007/s00233-025-10584-5}{\nolinkurl{doi:10.1007/s00233-025-10584-5}}.

\bibitem{RenZeng2019}
Ren, M.M., Zeng, L.L.:
On a hereditarily finitely based ai-semiring variety.
Soft Comput. \textbf{23}, 6819--6825 (2019).
\href{https://doi.org/10.1007/s00500-018-03719-0}{\nolinkurl{doi:10.1007/s00500-018-03719-0}}.

\bibitem{RZW2017}
Ren, M.M., Zhao, X.Z., Wang, A.F.:
On the varieties of ai-semirings satisfying $x^3\approx x$.
Algebra Univers. \textbf{77}(4), 395--408 (2017).
\href{https://doi.org/10.1007/s00012-017-0438-z}{\nolinkurl{doi:10.1007/s00012-017-0438-z}}.

\bibitem{SR}
Shao, Y., Ren, M.M.:
On the varieties generated by ai-semirings of order two.
Semigroup Forum \textbf{91}(1), 171--184 (2015).
\href{https://doi.org/10.1007/s00233-014-9667-z}{\nolinkurl{doi:10.1007/s00233-014-9667-z}}.

\bibitem{Shaprynskii2024}
Shaprynski\u{\i}, V.Yu.:
Semiring identities of the semigroup $B_0$.
Semigroup Forum \textbf{109}(3), 693--705 (2024).
\href{https://doi.org/10.1007/s00233-024-10476-0}{\nolinkurl{doi:10.1007/s00233-024-10476-0}}.

\bibitem{Volkov2021Brandt}
Volkov, M.V.:
Semiring identities of the Brandt monoid.
Algebra Univers. \textbf{82}(3), article no.~42 (2021).
\href{https://doi.org/10.1007/s00012-021-00731-8}{\nolinkurl{doi:10.1007/s00012-021-00731-8}}.

\bibitem{WRZ2024}
Wu, Y.N., Ren, M.M., Zhao, X.Z.:
The additively idempotent semiring $S_7^0$ is nonfinitely based.
Semigroup Forum \textbf{108}(2), 479--487 (2024).
\href{https://doi.org/10.1007/s00233-024-10420-2}{\nolinkurl{doi:10.1007/s00233-024-10420-2}}.

\bibitem{WZR2023}
Wu, Y.N., Zhao, X.Z., Ren, M.M.:
On varieties of flat nil-semirings.
Semigroup Forum \textbf{106}(1), 271--284 (2023).
\href{https://doi.org/10.1007/s00233-023-10337-2}{\nolinkurl{doi:10.1007/s00233-023-10337-2}}.

\bibitem{YRG}
Yue, M.Y., Ren, M.M., Gao, Z.D.:
Two nonfinitely-based additively idempotent semirings of order four.
Int. J. Algebra Comput. (2026), published online.
\href{https://doi.org/10.1142/S0218196726500414}{\nolinkurl{doi:10.1142/S0218196726500414}}.

\bibitem{YRZS2025}
Yue, M.Y., Ren, M.M., Zeng, L.L., Shao, Y.:
The finite basis problem for additively idempotent semirings of order four, II.
Algebra Univers. \textbf{86}, article no.~33 (2025).
\href{https://doi.org/10.1007/s00012-025-00908-5}{\nolinkurl{doi:10.1007/s00012-025-00908-5}}.

\bibitem{ZRCS2020}
Zhao, X.Z., Ren, M.M., Crvenkovi\'{c}, S., Shao, Y., Dapi\'{c}, P.:
The variety generated by an ai-semiring of order three.
Ural Math. J. \textbf{6}(2), 117--132 (2020).
\href{https://doi.org/10.15826/umj.2020.2.012}{\nolinkurl{doi:10.15826/umj.2020.2.012}}.

\bibitem{ZW2025}
Zhao, X.Z., Wu, Y.N.:
The finite basis problem for ai-semirings obtained from a commutative flat semiring by adjoining a zero.
Commun. Algebra \textbf{53}(9), 3721--3732 (2025).
\href{https://doi.org/10.1080/00927872.2025.2467188}{\nolinkurl{doi:10.1080/00927872.2025.2467188}}.
\end{thebibliography}
\end{document}